\newtheorem{theorem}{Theorem}[section]
\newtheorem{lemma}{Lemma}[section]
\newtheorem{remark}{Remark}[section]
\numberwithin{equation}{section}
\newcommand{\al}{\alpha}
\newcommand{\fy}{\varphi}
\newcommand{\fyy}{\psi}
\newcommand{\hfy}{\widehat\varphi}
\newcommand{\la}{\lambda}
\newcommand{\ep}{\epsilon}
\newcommand{\rb}[1]{\raisebox{1.5ex}[0pt]{#1}}
\def\dH#1{\dot H^{#1}(\Omega)}
\def\Dmu{\mathrm{D}_t^{[\mu]}}
\def\Dal{{D_t^\al}}
\def\Om{\Omega}
\def\II{(\Om)}
\def\cS{\mathcal S}
\def\hphi{\widehat \phi}
\def\hg{\widehat g}
\def\G{\Gamma}
\def\hy{\widehat y}
\begin{document}
\title[Error Estimates for Distributed order FPDE]
{Error Estimates for Approximations of Distributed Order Time Fractional Diffusion with Nonsmooth Data }
\author {Bangti Jin \and Raytcho Lazarov \and Dongwoo Sheen  \and Zhi Zhou}
\address{Department of Computer Science, University College London, Gower Street,
London WC1E 6BT, UK ({bangti.jin@gmail.com})}
\address{Department of Mathematics, Texas A\&M University, College Station,
TX 77843-3368, USA \\
(lazarov@math.tamu.edu, zzhou@math.tamu.edu)}
\address{Department of Mathematics, Seoul National University,
Seoul 151-747, Korea (dongwoosheen@gmail.com)}
\date{started September 2014; today is \today}

\begin{abstract}
In this work, we consider the numerical solution of an initial boundary value problem for the distributed
order time fractional diffusion equation. The model arises in the mathematical modeling of ultra-slow
diffusion processes observed in some physical problems, whose solution decays only logarithmically
as the time $t$ tends to infinity. We develop a space semidiscrete scheme based on the standard
Galerkin finite element method, and establish error estimates  optimal with respect to data
regularity in $L^2\II$ and $H^1\II$ norms for both smooth and nonsmooth initial data. Further, we propose
two fully discrete schemes, based on the Laplace transform and convolution quadrature generated by the
backward Euler method, respectively, and provide optimal convergence rates in the $L^2\II$ norm,
which exhibits exponential convergence and first-order convergence in time, respectively.
Extensive numerical experiments are provided to verify the error estimates for both smooth and
nonsmooth initial data, and to examine the asymptotic behavior of the solution. \\
\textbf{Keywords}: distributed order, time fractional diffusion, Galerkin finite element method,
fully discrete scheme, Laplace transform, error estimates
\end{abstract}

\maketitle

\section{Introduction}\label{sec:intro}
We consider an initial-boundary value problem for the following distributed order time fractional diffusion equation
for $u(x,t)$:
\begin{alignat}{3}\label{eqn:fde}
   \Dmu u -\Delta u&= f&&\quad \text{in  } \Om&&\quad T \ge t > 0,\notag\\
   u&=0&&\quad\text{on}\  \partial\Om&&\quad T \ge t > 0,\\
    u(0)&=v&&\quad\text{in  }\Om,&&\notag
\end{alignat}
where $\Om$ is a bounded convex polygonal domain in $\mathbb R^d\,(d=1,2,3)$ with a boundary $\partial\Om$, $v$ is a given
function on $\Om$, and $T>0$ is a fixed value. Here,
$\Dmu u$ denotes the distributed order fractional derivative of $u$ in time $t$ (with respect to the
weight function $\mu$) defined by
\begin{equation}\label{DF}
   \Dmu u(t)= \int_0^1 \Dal u(t) \mu(\al)\,d\al,
\end{equation}
where $\Dal u$, $0<\al<1$, denotes the left-sided Caputo fractional derivative of order $\al$ with respect to $t$ and
it is defined by (see, {\it e.g.} \cite[p. 91]{KilbasSrivastavaTrujillo:2006})
\begin{equation}\label{McT}
   \Dal u(t)= \frac{1}{\Gamma(1-\al)} \int_0^t(t-s)^{-\al}\frac{d}{ds}u(s)\, ds,
\end{equation}
where $\Gamma(\cdot)$ denotes Euler's Gamma function defined by $\Gamma(x)=\int_0^\infty t^{x-1}e^{-t}dt$, for all $x>0$.
In this paper we consider the case that $\mu \in C[0,1]$ is a nonzero nonnegative weight function with $0\leq  \mu<1$, $\mu(0)\mu(1)>0$.

In the last three decades, fractional calculus has been extensively studied and successfully employed to model anomalous diffusion,
in which the mean squared variance grows faster (superdiffusion) or slower (subdiffusion) than that in a Gaussian process. The
subdiffusion model, which is a diffusion equation involving a Caputo fractional derivative $D_t^{\alpha_0}u $ of order $ \al_0\in (0,1)$ in time:
\begin{equation}\label{eqn:subdiff}
  D_t^{\alpha_0} u -\Delta u= f \quad \text{in  } \Om, \quad T \ge t > 0,
\end{equation}
is often employed to model subdiffusion processes in which the mean squared variance grows at a sublinear (power type)
rate, slower than the linear growth in a Gaussian process for normal diffusion. Formally, the subdiffusion model \eqref{eqn:subdiff}
can be recovered from the distributed order model \eqref{eqn:fde} with a singular weight $\mu(\alpha)=\delta(\alpha-\alpha_0)$, where
$\delta(\alpha-\alpha_0)$ is a Dirac delta function at $\alpha_0$. Physically, the subdiffusion process can be characterized by a unique diffusion
exponent (commonly known as Hurst exponent) showing the time dependence of the characteristic displacement \cite{ChechkinGorenfloSokolov:2002}.
In practice, the physical process may not possess a unique Hurst exponent, and the distributed order model \eqref{eqn:fde} provides a
flexible framework for describing a host of continuous and nonstationary signals \cite{ChechkinGorenfloSokolov:2002,ChechkinGorenfloSokolovGonchar:2003,
SokolovChechkinKlafter:2004}. Problem \eqref{eqn:fde} is frequently applied to describe ultraslow diffusion,
where the mean squared variance grows only logarithmically with time, {\it e.g.}, Sinai model \cite{Sinai:1982}.
The distributed-order fractional model arises often in disordered media,
and has been successfully used in several applications. For example, Caputo \cite{Caputo:2001}
proposed the use of the distributed order derivative in generalizing the
stress-strain relation in dielectrics, and Atanakovic {\it et al.} \cite{AtanakovicPilipovicZorica:2011}
suggested a distributed order wave equation as the constitutive relation for
viscoelastic materials to describe stress relaxation in a rod.

In recent years, the theoretical study of problem \eqref{eqn:fde} has attracted some
attention \cite{MeerschaertScheffler:2006,Kochubei:2008,mainardi2008time,
MeerschaertNaneVellaisamy:2011,Luchko:2009,GorenfloLuchkoStajanovic:2013,LiLuchkoYamamoto:2014,JiaPengLi:2014,Bazhlekova_2015arXiv}.
Kochubei \cite{Kochubei:2008} made some early contributions to the rigorous analysis of the model \eqref{eqn:fde}, by
constructing fundamental solutions to the problem and establishing their positivity and subordination property.
Mainardi \textit{et al.} \cite{mainardi2008time} studied the existence of a solution, asymptotic behavior, and positivity \emph{etc.}
for the case $\mu(0) \ge 0$ and $\int_0^1 \mu(\alpha) d\alpha =c > 0$. Meerschaert and Scheffler
\cite{MeerschaertScheffler:2006} {gave a stochastic model for } ultraslow diffusion, based on random walks with a random
waiting time between jumps whose probability tail falls off at a logarithmic rate. Meerschaert {\it et al.}
\cite{MeerschaertNaneVellaisamy:2011} provided explicit strong solutions and their stochastic analogues. Luchko
\cite{Luchko:2009} showed a weak maximum principle for the problem. Li \emph{et al.} \cite{LiLuchkoYamamoto:2014}
established sharp asymptotic behavior of the solution for $t\to0$ and $t\to \infty$, in the case of continuous density $
\mu$ with $\mu(1)>0$. Jia \textit{et al.} \cite{JiaPengLi:2014} studied the well-posedness of a Cauchy problem for an abstract
distributed-order differential equation using a functional calculus approach. Very recently, Bazhlekova \cite{Bazhlekova_2015arXiv}
analyzed problem \eqref{eqn:fde} for $\mu \in C[0,1]$, $\mu \ge 0$ and $\mu(\alpha)\not = 0$ on a set of positive measure.

The solution to the model \eqref{eqn:fde} is rarely available in closed form, which necessitates the development
of efficient numerical schemes, to enable the successful use of the model \eqref{eqn:fde} in practice. Despite the
extensive studies on the simpler subdiffusion model \eqref{eqn:subdiff} (\textit{cf.}  \cite{LinXu:2007,
McleanMustapha:2014,ZhangSunLiao:2014,CuestaLubichPlencia:2006,ZengLiLiuTurner:2013,MustaphaAbdallahFurati:2014,JinLazarovZhou:2014a,JinLazarovZhou:2014L1}
for an incomplete list of works on the numerical approximation of the Caputo fractional derivative $\Dal u(t)$), there are only
very few studies \cite{DiethelmFord:2009,Katsikadelis:2014,MorgadoRebelo:2015} on the distributed order
model \eqref{eqn:fde}. Diethelm and Ford \cite{DiethelmFord:2009} developed a numerical scheme for distributed order
fractional ODEs. It approximates the distributed order derivative $\Dmu u(t)$ by quadrature, leading to a multi-term
time fractional ODE, which can then be solved by fractional multi-step methods. Error estimates of the approximation
were discussed in \cite{DiethelmFord:2009}. Such a technique was also employed to solve nonlinear distributed-order fractional
ODEs in \cite{Katsikadelis:2014}, but without any analysis. Just recently, Morgado and Rebelo \cite{MorgadoRebelo:2015}
developed an implicit finite difference method for the model \eqref{eqn:fde} with a Lipschitz nonlinear source term in
one space dimension. The scheme is based on a quadrature approximation of $\Dmu u(t)$ together with the backward finite
difference approximation for the Caputo derivative $\Dal u(t)$, and the second-order finite difference approximation in
space. The stability of the scheme, and a convergence rate $O(h^2+\tau+(\delta\alpha)^2)$ (with $h,\tau$ and $\delta\alpha$
being the mesh size, time step size and step size for quadrature rule, respectively) were established under the assumption
that the solution $u$ is $C^2$ in time and $C^4$ in space and the weight function $\mu(\alpha)$ is
sufficiently regular. In view of the limited smoothing property of the solution operator, \emph{cf.} Theorem \ref{thm:reg}
below, the regularity required by the convergence analysis is restrictive, especially for nonsmooth data. To the best of our
knowledge, the development of robust numerical schemes for the model \eqref{eqn:fde} with nonsmooth data and
their rigorous analysis have not been carried out, despite its immense practical importance, {\it e.g.}, in solving
inverse and/or optimal control problems \cite{JinRundell:2014}.

In this work, we develop a Galerkin finite element method (FEM) for problem \eqref{eqn:fde} and establish optimal
(with respect to data regularity) error estimates for both smooth and nonsmooth initial data $v$. The approximation is based on
the finite element space $X_h$ of continuous piecewise linear functions over a family of shape regular quasi-uniform
partitions $\{\mathcal{T}_h\}_{0<h<1}$ of the domain $\Om$ into $d$-simplexes, where $h$ is the maximum diameter of the partition.
Then the space semidiscrete Galerkin FEM for problem \eqref{eqn:fde} is given by: find $ u_h (t)\in X_h$ such that
\begin{equation}\label{eqn:semidiscrete}
  \begin{split}
   {(\Dmu u_{h},\chi)}+ a(u_h,\chi)&= {(f, \chi)},
                      \quad \forall \chi\in X_h,\ T \ge t >0,
   \quad u_h(0)=v_h,
  \end{split}
\end{equation}
where $(\cdot,\cdot)$ denotes the $L^2\II$-inner product,
$a(u,w)=(\nabla u, \nabla w) ~~ \text{for}\ u, \, w\in H_0^1(\Om)$, and $v_h \in X_h$ is an approximation of the
initial data $v$. Our default choices for $v_h$ are the $L^2(\Om)$-projection $v_h=P_hv$, for $v\in L^2(\Om)$, and the
Ritz projection $v_h=R_hv$, for $Av\in L^2(\Omega)$, where $A=-\Delta$ with a homogeneous Dirichlet boundary condition.
Further, we develop two fully discrete schemes based on the Laplace transform and convolution quadrature
generated by the backward Euler method, and provide optimal error estimates for both space semidiscrete and fully discrete schemes.

Our main contributions are as follows. First, in Theorem \ref{thm:reg}, we establish the sharp regularity estimates
for the solution to problem \eqref{eqn:fde}. The proof relies essentially on various refined properties of the kernel function $w(z)$
defined in \eqref{eqn:w} in Lemmas \ref{lem:sector-w}-\ref{lem:key2}, which also enable one to apply the
established techniques for analyzing the semidiscrete and fully discrete schemes.
Second, in Theorems \ref{thm:nonsmooth-initial-op} and  \ref{thm:smooth-initial-op},
we derive the following error estimates for the space semidiscrete Galerkin scheme \eqref{eqn:semidiscrete} for $t\in(0,T]$:
\begin{equation*}
 \|u(t)-u_h(t)\|_{L^2(\Om)} + h\|\nabla (u(t)-u_h(t))\|_{L^2(\Om)} \leq
\begin{cases}
c_Th^{2} \left|t\log \frac{2T}{t}\right|^{-1} \|v\|_{L^2(\Om)}\quad & \text{if } v\in L^2(\Om),  \\
ch^{2} \|Av\|_{L^2(\Omega)}\quad &\text{if } Av\in L^2(\Omega).
\end{cases}
\end{equation*}
For initial data $v\in L^2(\Om)$, the estimates deteriorate as the time $t$ approaches $0$, with an extra {$\frac1{|\log t|}$} factor
in comparison with that for the standard diffusion case \cite{Thomee:2006}. Third, we develop a fully discrete
scheme based on the Laplace transform. It relies on a contour representation of the semidiscrete solution with a hyperbolic contour,
and trapezoidal quadrature, cf. Theorem \ref{thm:error_fully}. Specifically,
the fully discrete solution $U_{N,h}(t)$ with $N+1$ quadrature points satisfies the following error bound for $t\in(0,T]$:
\begin{equation*}
 \|u(t) - U_{N,h}(t) \|_{L^2(\Om)} \leq
\begin{cases}
c_T\left(e^{-c_1N } +  h^{2} \left|t\log \frac{2T}{t}\right|^{-1}\right) \|v\|_{L^2(\Om)}\quad & \text{if } v\in L^2(\Om),  \\
c\left(e^{-c_1N }+ h^{2}\right) \|Av\|_{L^2(\Om)}\quad &\text{if } Av\in {L^2(\Om)}.
\end{cases}
\end{equation*}
Last, we develop a second fully discrete scheme based on convolution quadrature, generated by the backward Euler method, 
and in Theorem \ref{thm:error_fully_CQ}, establish the first-order convergence of the scheme for both smooth and nonsmooth 
initial data. For example, for nonsmooth initial data $v\in L^2(\Om)$, the fully discrete solution $U_h^n$ approximating 
the continuous solution $u(t_n)$, $t_n\in(0,T]$ (on a uniform grid in time with a step size $\tau$) satisfies the following bound:
\begin{equation*}
   \| u(t_n)-U_h^n \|_{L^2(\Om)} \le c_T\left(\tau + h^2 \left|\log \tfrac{2T}{t}\right|^{-1}\right)t_n^{-1}  \| v\|_{L^2\II}.
\end{equation*}
It is worth noting that all error estimates are nearly optimal and expressed in terms of the regularity of the initial data directly,
and fully verified by extensive numerical experiments. Theoretically, these results extend our earlier studies
\cite{JinLazarovZhou:2013,JinLazarovZhou:2014a,JinLazarovLiuZhou:2014} on the subdiffusion model \eqref{eqn:subdiff},
which contribute to the development and rigorous analysis of robust numerical schemes for the distributed order model \eqref{eqn:fde}.

The model \eqref{eqn:fde} is closely related to parabolic equations with a
positive type memory term, for which there are many important studies on numerical schemes based on convolution quadrature \cite{Lubich:1988,LubichSloanThomee:1996,CuestaLubichPlencia:2006} and Laplace transform \cite{gavrilyuk2002mathcal,
Lopez-FernandezPalencia:2006,SheenSloanThomee:2000,SheenSloanThomee:2003,McLeanSloanThomee:2006,WeidemanTrefethen:2007,
mclean2010numerical}. For example, Cuesta \emph{et al.} \cite{CuestaLubichPlencia:2006} developed an abstract framework for
analyzing convolution quadrature generated by the backward Euler method and second-order backward difference. It
covers also inhomogeneous and nonlinear problems. Their analysis uses the generating function, and the Laplace transform 
involves $w(z) =z^\alpha,\ 0<\alpha <1$. McLean and Thom\'ee \cite{mclean2010numerical} studied the Laplace transform method 
for a fractional order model, whose Laplace transform involves $w(z)=z^\alpha,\ -1<\alpha <1$. These interest works have
inspired the current work on the model \eqref{eqn:fde}. However, the existing error analysis 
does not cover directly the model \eqref{eqn:fde}, due to the general kernel function
involved, cf. \eqref{eqn:w}. Instead, we shall opt for the general strategy outlined in \cite{LubichSloanThomee:1996}, by
deriving various refined estimates for the kernel function, especially identifying the suitable condition on the weight 
function $\mu$. These estimates are also essential for analyzing the Laplace transform approach.

The rest of the paper is organized as follows. In Section \ref{sec:prelim}, we recall the solution theory of the mathematical model
\eqref{eqn:fde} following \cite{Kochubei:2008,LiLuchkoYamamoto:2014}. In Section \ref{sec:semi}, we develop a space
semidiscrete Galerkin  scheme, and provide optimal error estimates. Two fully discrete schemes, based on the
Laplace transform and convolution quadrature, are given in Sections \ref{sec:Laplace} and \ref{sec:Conv-quad}, respectively.
Finally, to test and to verify the convergence theory, we present in Section \ref{sec:numer} extensive numerical experiments.
Throughout, the notation $c$, with or without a subscript, denotes a generic constant, which may differ
at different occurrences, but it is always independent of the mesh size $h$, the number $N$ of quadrature points,
and time step size $\tau$.

\section{Solution theory}\label{sec:prelim}
In this part, we discuss the solution theory of problem \eqref{eqn:fde} using the Laplace transform. The stability
estimates will play an essential role in developing optimal error estimates. We denote by $\ \widehat{}\ $ the
Laplace transform. First we recall the following well known relation
\cite[Lemma 2.24, p. 98]{KilbasSrivastavaTrujillo:2006}
\begin{equation}\label{eqn:Laplacetransf}
  \widehat {\partial_t^\al u}= z^\al \widehat{u}-z^{\al-1}u(0).
\end{equation}
Next we denote by $A$ the operator $-\Delta$ with a homogeneous Dirichlet boundary condition with
a domain $D(A) = H_0^1(\Om)\cap H^2(\Om)$. The $H^2(\Om)$ regularity of the elliptic problem is
essential for the error analysis below and it follows from the convexity assumption on the domain $\Om$. It is well known that
the operator $A$ generates a bounded analytic semigroup of angle $\pi/2$, {\it i.e.}, for any
$\theta\in(\pi/2,\pi)$ \cite[p.\,321, Proposition C.4.2]{Hasse_book}
\begin{equation}\label{eqn:resol}
 \|(zI + A)^{-1}\|\leq \frac{1}{|\Im(z)|}\le \frac1{|z\sin(\theta)|} \quad \forall z\in \Sigma'_{\theta},
\end{equation}
where $\Sigma'_{\theta}$ is a sector with the origin excluded, \textit{i.e.},
\begin{equation*}
  \Sigma_{\theta} = \{z\in\mathbb{C}: |\mathrm{arg}(z)| < \theta\},\quad
  \Sigma'_{\theta} = \Sigma_{\theta} \setminus \{0\}.
\end{equation*}
Now it follows from \eqref{eqn:fde} and \eqref{eqn:Laplacetransf} that
\begin{equation}\label{eqn:laptrans}
    z w(z) \widehat u(z) + A\widehat u(z)=w(z) v,
\end{equation}
where the function $w(z)$ is defined by
\begin{equation}\label{eqn:w}
  w(z) = \int_0^1 z^{\al-1} \mu(\al) \, d\al.
\end{equation}
By means of the inverse Laplace transform, the solution $u(t)$ can be represented by
\begin{equation}\label{eqn:interep}
 u(t)= S(t) v:=\frac{1}{2\pi \mathrm{i}} \int_{\Gamma_{\theta,\delta}} e^{zt}H(z) v \,dz,
\end{equation}
where the kernel $H(z)$ is defined by
\begin{equation*}
 H(z) =  (zw(z)I+A)^{-1}w(z),
\end{equation*}
and the contour $\Gamma_{\theta,\delta}$ by
\begin{equation}\label{eqn:contour}
  \Gamma_{\theta,\delta}=\left\{z\in \mathbb{C}: |z|=\delta, |\arg z|\le \theta\right\}\cup
  \{z\in \mathbb{C}: z=\rho e^{\pm i\theta}, \rho\ge \delta\}.
\end{equation}


We begin by discussing the regularity estimates of the solution. To this end, we first give a few elementary properties
of the function $w(z)$. The first is the sector-preserving property, which enables applying the
resolvent estimate \eqref{eqn:resol} in the error analysis to be developed in Sections \ref{sec:semi}-\ref{sec:Conv-quad}.
\begin{lemma}\label{lem:sector-w}
Let $\theta\in (\pi/2,\pi)$ and assume that $\mu(0)\mu(1)>0$. Then $zw(z) \in \Sigma_{\theta'}$
with $\theta' \in (\pi/2,\pi)$ for all $z \in \Sigma_{\theta}$ and $\theta'$ depends only on $\mu$ and $\theta$.
\end{lemma}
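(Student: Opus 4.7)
The plan is to exploit the representation $zw(z) = \int_0^1 z^\alpha \mu(\alpha)\,d\alpha$, which follows directly from \eqref{eqn:w}, and to apply a simple convex-cone argument. Fix $z = re^{i\varphi}$ with $|\varphi|<\theta$. Using the principal branch, $z^\alpha = r^\alpha e^{i\alpha\varphi}$, so $\arg(z^\alpha)=\alpha\varphi$ lies in the interval between $0$ and $\varphi$ for every $\alpha\in[0,1]$, and in particular in $[-\theta,\theta]$. Since $\theta<\pi$, the closed sector $\overline{\Sigma}_\theta=\{w\in\mathbb C:|\arg(w)|\le\theta\}$ is a convex cone (it is the intersection of two closed half-planes). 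Because $\mu\ge 0$ on $[0,1]$, the integral is a limit of non-negative linear combinations of points of $\overline{\Sigma}_\theta$ and therefore lies in $\overline{\Sigma}_\theta$ as well.

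Second, I would check that $zw(z)\ne 0$, so that its argument is well defined. For $\varphi\in(0,\theta)$, splitting into real and imaginary parts gives
\begin{equation*}
  \mathrm{Im}(zw(z)) = \int_0^1 r^\alpha\mu(\alpha)\sin(\alpha\varphi)\,d\alpha,
\end{equation*}
and since $\alpha\varphi\in(0,\pi)$ for $\alpha\in(0,1]$, the integrand is non-negative; the assumption $\mu(1)>0$, together with continuity of $\mu$, forces $\mu>0$ on a one-sided neighborhood of $\alpha=1$, so the integral is strictly positive. The case $\varphi\in(-\theta,0)$ follows by complex conjugation, and for $\varphi=0$ one has $zw(z)>0$ using $\mu(0)>0$. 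Combining this with the convex-cone inclusion yields $|\arg(zw(z))|\le|\varphi|<\theta$ for every $z\in\Sigma_\theta$, so the conclusion holds with $\theta'=\theta\in(\pi/2,\pi)$.

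I do not expect a real obstacle here; the only delicate point is the convexity of the target sector, which is exactly where the restriction $\theta<\pi$ is used. The hypotheses $\mu(0)\mu(1)>0$ and $\mu\ge 0$ enter only through the non-vanishing step, ensuring that $\mu$ is strictly positive on intervals abutting both endpoints $\alpha=0,1$; nothing finer about the shape of $\mu$ is needed. The resulting $\theta'$ can in fact be taken equal to $\theta$, which is slightly stronger than the stated dependence on $\mu$.
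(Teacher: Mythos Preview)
Your argument is correct and takes a genuinely different route from the paper. The paper proceeds by explicit case analysis on $\varphi=\arg z$: for $|\varphi|<\pi/2$ it verifies $\Re(zw(z))>0$ directly, while for $\varphi\in[\pi/2,\theta]$ it splits further according to $r=|z|$, showing $\Re(zw(z))>0$ for small $r$ (via $\mu(0)>0$) and bounding $|\tan(\arg(zw(z)))|$ from below for large $r$ (via $\mu(1)>0$), ultimately obtaining $\theta'=\pi-\arctan(c')$ for a constant $c'$ depending on $\mu$ and $\theta$. Your convex-cone argument is cleaner and yields the sharper conclusion $\theta'=\theta$; in particular, the content of Remark~\ref{rem:eparg} (that $\theta'\approx\pi-c\epsilon$ when $\theta=\pi-\epsilon$, needed later in the Laplace-transform analysis) follows immediately from your bound $|\arg(zw(z))|\le|\varphi|$ with $c=1$. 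One small write-up point: you state the convex-cone inclusion only for $\overline{\Sigma}_\theta$, but the final inequality $|\arg(zw(z))|\le|\varphi|$ really requires applying the same convexity reasoning to the smaller closed sector $\{w:\arg w\in[0,\varphi]\}$ (for $\varphi>0$), which is indeed convex since $\varphi<\pi$. This is implicit in your observation that $\arg(z^\alpha)=\alpha\varphi\in[0,\varphi]$, but it would be worth one explicit sentence.
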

\begin{proof}
Let $z=re^{\mathrm{i}\fy}$ with $ \fy\in [-\theta,\theta]$. If $\fy \in (-\pi/2,\pi/2)$, we have
\begin{equation*}
 \mathrm{\Re}(zw(z)) = \int_0^1 r^{\al} \cos(\al\fy) \mu(\al) \, d\al > 0.
\end{equation*}
It suffices to consider the case $\fy \in [\pi/2,\theta]$. First we claim that there exists an
$r_0\in(0,1)$ only dependent on $\mu$ such that  $\mathrm{\Re}(zw(z))  > 0$ for all $r<r_0$.
By the assumption $\mu(0)>0$, we can find a small $\ep_0>0$ such that
$\min_{\al\in[0,\ep_0]}\cos(\al\pi)\mu(\al)=\delta_0>0$. Hence
\begin{equation}\label{eqn:ep0}
\begin{split}
 \mathrm{\Re}(zw(z)) 
 &\ge \int_0^{\ep_0} r^{\al} \cos(\al\fy) \mu(\al) \, d\al - \int_{\ep_0}^{1} r^{\al}|\cos(\al\fy)|\mu(\al) \, d\al\\
 &\ge \delta_0\int_0^{\ep_0} r^{\al} \, d\al - \| \mu \|_{C[0,1]}\int_{\ep_0}^{1} r^{\al}\, d\al\\
 &\ge -(\ln r)^{-1}\left[ \delta_0-r^{\ep_0}(\delta_0+\|\mu \|_{C[0,1]})   \right].
\end{split}
\end{equation}
Then direct calculation yields
\begin{equation*}
\begin{split}
   \delta_0-r^{\ep_0}(\delta_0+\|\mu \|_{C[0,1]}) >0 \quad \forall r<r_0=:\left(\frac{\delta_0}{\delta_0+\|\mu \|_{C[0,1]}}\right)^{1/\ep_0}\in(0,1),
\end{split}
\end{equation*}
and the desired claim $\mathrm{\Re}(zw(z))  > 0$ follows. Now we consider the case $r\ge r_0$ and  $\fy \in [\pi/2,\theta]$.
In fact,
\begin{equation*}
 \begin{split}
  | \tan(\arg(zw(z))) | &= \frac{|\int_0^1 r^{\al} \sin(\al\fy) \mu(\al) \, d\al|}{|\int_0^1 r^{\al} \cos(\al\fy) \mu(\al) \, d\al|}
  \ge \frac{\int_0^1 r^{\al} \sin(\al\fy) \mu(\al) \, d\al}{\| \mu  \|_{C[0,1]}\int_0^1 r^{\al} \, d\al}.
 \end{split}
\end{equation*}
In view of the assumption $\mu(1) > 0$ and $\mu \in C[0,1]$, we may find a small $\ep_1>0$
such that $\min_{\al\in[1-\ep_1,1]} \mu(\al)\ge \delta_1>0$ and
\begin{equation*}
 \begin{split}
  \int_0^1 r^{\al} \sin(\al\fy) \mu(\al) \, d\al  &\ge \int_{1-\ep_1}^1 r^{\al} \sin(\al\fy) \mu(\al) \, d\al \\
  &\ge  \int_{1-\ep_1}^1 r^{\al} \sin(\theta) \mu(\al) \, d\al
  \ge  \delta_1 \sin(\theta) \int_{1-\ep_1}^1 r^{\al} \, d\al.
 \end{split}
\end{equation*}
For $r\ge r_0$, clearly there holds
\begin{equation}\label{eqn:rger0}
 \begin{split}
\int_{1-\epsilon_1}^1 r^{\al}\,d\al &= \int_{1-\epsilon_1}^1 r_0^\al\left(\frac{r}{r_0}\right)^{\al}\,d\al
\ge r_0 \int_{1-\epsilon_1}^1 \left(\frac{r}{r_0}\right)^{\al}\,d\al\\
& \ge r_0\ep_1 \int_{0}^1 \left(\frac{r}{r_0}\right)^{\al}\,d\al
\ge r_0\ep_1 \int_{0}^1 r^{\al}\,d\al.
 \end{split}
\end{equation}
Then we have
\begin{equation}\label{eqn:arg}
  | \tan(\arg(zw(z))) | \ge   \delta_1\sin(\theta) r_0\ep_1 /\| \mu  \|_{C[0,1]} =:c'.
\end{equation}
Hence $zw(z) \in \Sigma_{\theta'} $ with $\theta'=\pi-\arctan(c')$.
\end{proof}

\begin{remark}\label{rem:eparg}
We note that the constants $\delta_1$, $r_0$  and $\ep_1$ in \eqref{eqn:arg} are all independent of the choice
$\theta$. Hence in case of $\theta=\pi-\epsilon$ for a small $\ep>0$,  $zw(z) \in \Sigma_{\theta'} $ with
$\theta'=\pi-\arctan(c \sin(\theta))= \pi-\arctan(c \sin(\ep)) \approx\pi-c \epsilon, $
where $c=\delta_1 r_0\ep_1 /\| \mu  \|_{C[0,1]}$.
\end{remark}

The second result  is an upper bound on the kernel $w(z)$, which can be obtained by an elementary calculation.
\begin{lemma}\label{lem:mu}
 Let $\mu\in C[0,1]$ be a nonnegative function. Then there holds
 $$ |w(z)| \le \| \mu  \|_{C[0,1]}\frac{|z|-1}{|z|\log|z|}.  $$
\end{lemma}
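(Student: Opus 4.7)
The plan is entirely elementary and follows the hint in the lemma's statement that this bound comes from a direct calculation. The key observation is that for any complex $z$ (with the principal branch of $z^{\alpha-1}$) one has $|z^{\alpha-1}| = e^{(\alpha-1)\log|z|} = |z|^{\alpha-1}$, so the modulus can be brought inside the integral without any loss.

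First I would apply the standard $L^1$ triangle inequality for integrals, giving
\begin{equation*}
   |w(z)| \le \int_0^1 |z^{\alpha-1}|\,\mu(\alpha)\,d\alpha = \int_0^1 |z|^{\alpha-1}\,\mu(\alpha)\,d\alpha.
\end{equation*}
Since $\mu$ is nonnegative and continuous on $[0,1]$, I would then bound $\mu(\alpha)$ by its sup-norm $\|\mu\|_{C[0,1]}$ and pull it out.

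Next I would evaluate the elementary one-variable integral with $r=|z|>0$: setting $r^{\alpha-1}=e^{(\alpha-1)\log r}$, an antiderivative in $\alpha$ is $e^{(\alpha-1)\log r}/\log r$, so
\begin{equation*}
   \int_0^1 r^{\alpha-1}\,d\alpha = \frac{1}{\log r}\bigl(1-e^{-\log r}\bigr) = \frac{r-1}{r\log r}.
\end{equation*}
Combining these two steps gives exactly the stated bound.

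The only point that needs a brief comment is that the right-hand side is well-defined and positive at $r=1$ (in the limit it equals $1$) and for all $r>0,\,r\neq 1$: when $r>1$, both numerator and $\log r$ are positive; when $0<r<1$, both are negative; so the ratio is nonnegative in both regimes. There is no real obstacle here — the bound is crude in the sense that it loses all cancellation in the integrand, but that is exactly what is needed for the subsequent resolvent estimates invoking Lemma \ref{lem:sector-w}.
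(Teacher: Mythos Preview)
Your proof is correct and is exactly the elementary calculation the paper alludes to (the paper itself gives no proof, stating only that the bound ``can be obtained by an elementary calculation''). Nothing more is needed.
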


The third result gives a lower bound on the function $zw(z)$.
\begin{lemma}\label{lem:key2}
Let $\theta\in(\pi/2,\pi)$ and assume that $\mu(0)\mu(1)>0$. Then there exists a constant $c>0$ dependent
only on $\theta$ and $\mu$  such that for any $z\in \Sigma_{\theta}'$
\begin{eqnarray}
  &&|zw(z)|  \ge c\int_0^1 r^\al\,d\al= c\frac{|z|-1}{\log|z|} \label{eqn:key2}, \\
  &&|z|w(|z|)\geq |zw(z)| \ge c|z|w(|z|).\label{eqn:key3}
\end{eqnarray}
\end{lemma}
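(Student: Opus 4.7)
The bound $|zw(z)| \le |z|w(|z|)$ in \eqref{eqn:key3} is immediate from the triangle inequality applied to $zw(z) = \int_0^1 r^\alpha e^{i\alpha\varphi}\mu(\alpha)\,d\alpha$ with $z = re^{i\varphi}$. Since $\mu \le \|\mu\|_{C[0,1]}$, the lower bound in \eqref{eqn:key3} will follow from \eqref{eqn:key2} with an adjusted constant, so the whole lemma reduces to proving \eqref{eqn:key2}. By conjugation symmetry I may assume $\varphi = \arg(z) \in [0, \theta]$. Following the setup of Lemma \ref{lem:sector-w}, pick $\epsilon_0, \epsilon_1 \in (0, 1/2)$ and $\delta_0, \delta_1 > 0$ with $\epsilon_0\theta < \pi/2$, $\mu \ge \delta_0$ on $[0, \epsilon_0]$, and $\mu \ge \delta_1$ on $[1-\epsilon_1, 1]$. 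Bernoulli's inequality ($r^{\epsilon_0} \le 1 + \epsilon_0(r-1)$ for $r>0$, $\epsilon_0 \in [0,1]$) yields the comparisons
\[ \int_0^{\epsilon_0} r^\alpha\,d\alpha \ge \epsilon_0 \int_0^1 r^\alpha\,d\alpha \quad (0 < r \le 1), \qquad \int_{1-\epsilon_1}^1 r^\alpha\,d\alpha \ge \epsilon_1 \int_0^1 r^\alpha\,d\alpha \quad (r \ge 1), \]
which will convert localized lower bounds on $|zw(z)|$ into the target bound against $\int_0^1 r^\alpha\,d\alpha$.

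The plan is to split the $(r,\varphi)$-strip into three regions. For \emph{small} $r$ (say $r \le r_*$, with a threshold $r_*$ depending only on $\mu$), I use the localization of $\mu$ near $\alpha = 0$ that drives \eqref{eqn:ep0}: since $\cos(\alpha\varphi) \ge \cos(\epsilon_0\theta) > 0$ on $[0, \epsilon_0]$ and $|\cos(\alpha\varphi)| \le 1$ on $[\epsilon_0, 1]$,
\[ \mathrm{Re}(zw(z)) \ge \delta_0 \cos(\epsilon_0\theta)\int_0^{\epsilon_0} r^\alpha\,d\alpha - \|\mu\|_{C[0,1]}\int_{\epsilon_0}^1 r^\alpha\,d\alpha; \]
as $r \to 0^+$ the first integral is of order $1/|\log r|$ while the second is of order $r^{\epsilon_0}/|\log r|$, so for small enough $r$ the first dominates and is comparable to $\int_0^1 r^\alpha\,d\alpha$. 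For \emph{large} $r$ (say $r \ge R_*$), I use the localization near $\alpha = 1$: if $\varphi \in [0, \pi/2 - \epsilon^*]$ for a small auxiliary $\epsilon^* > 0$, then $\cos(\alpha\varphi) \ge \sin(\epsilon^*)$ uniformly in $\alpha$, giving $\mathrm{Re}(zw(z)) \ge \sin(\epsilon^*)\delta_1 \int_{1-\epsilon_1}^1 r^\alpha\,d\alpha$; otherwise $\varphi \in (\pi/2-\epsilon^*, \theta]$, and after choosing $\epsilon^*, \epsilon_1$ so that $[(1-\epsilon_1)(\pi/2-\epsilon^*), \theta] \subset (0, \pi)$, $\sin(\alpha\varphi)$ is bounded below on $[1-\epsilon_1, 1]$, so $\mathrm{Im}(zw(z)) \ge c \int_{1-\epsilon_1}^1 r^\alpha\,d\alpha$. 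The second comparison above then finishes both subcases. For \emph{intermediate} $r \in [r_*, R_*]$, I fall back on compactness: the continuous function $|zw(z)|$ is strictly positive on the compact set $\{re^{i\varphi} : r_* \le r \le R_*,\ |\varphi| \le \theta\}$ by Lemma \ref{lem:sector-w}, hence bounded below by some $c_0 > 0$, while $\int_0^1 r^\alpha\,d\alpha$ is bounded above there, yielding the desired inequality.

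I expect the small-$r$ regime to be the most delicate: one must verify quantitatively that the $\mu(0)$-contribution to $\mathrm{Re}(zw(z))$ beats the worst-case cancellation from $[\epsilon_0, 1]$ down to a fixed fraction of $\int_0^1 r^\alpha\,d\alpha$, and pin down $r_*$ accordingly. The asymptotic ratio $\int_0^{\epsilon_0} r^\alpha\,d\alpha / \int_{\epsilon_0}^1 r^\alpha\,d\alpha \sim r^{-\epsilon_0} \to \infty$ as $r \to 0^+$ is the crux of this, and the same phenomenon already underlies the proof of Lemma \ref{lem:sector-w}, so adapting the estimate \eqref{eqn:ep0} should suffice.
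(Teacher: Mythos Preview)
Your proof is correct and rests on the same ingredients as the paper's: the estimate \eqref{eqn:ep0} driven by $\mu(0)>0$ for small $r$, the localization near $\alpha=1$ (via the real or imaginary part) driven by $\mu(1)>0$ for large $r$, and the integral comparisons linking $\int_0^{\epsilon_0}$ or $\int_{1-\epsilon_1}^1 r^\alpha\,d\alpha$ to $\int_0^1 r^\alpha\,d\alpha$. The organization differs slightly. The paper splits first by angle: for $|\varphi|<\pi-\theta$ the real part handles all $r$ at once, and only for $|\varphi|\in[\pi-\theta,\theta]$ is a small/large dichotomy at a single threshold $r_0$ needed, so no intermediate radius range appears and the constant $c$ is explicit throughout. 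Your radius-first organization creates a third region $r\in[r_*,R_*]$, which you dispatch by compactness (nonvanishing of $zw(z)$ on $\Sigma_\theta'$ follows from Lemma~\ref{lem:sector-w}, or directly since $\mathrm{Im}(zw(z))>0$ whenever $\varphi\in(0,\theta]$). This is a legitimate shortcut but renders the constant non-constructive; the paper's arrangement avoids it.
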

\begin{proof}
Let $z=re^{\mathrm{i}\fy}$. Using $\mu(1) > 0$ and $\mu \in C[0,1]$, we can find a small $\ep_1>0$ such that
$\min_{\al\in[1-\ep_1,1]} \mu(\al) \ge \delta_1>0$. Then we have for all $r\ge1$
\begin{equation*}
 \begin{split}
\int_{0}^1 r^{\al}\mu(\al)\,d\al &\ge \int_{1-\epsilon_1}^1r^{\al}\mu(\al)\,d\al
\ge \delta_1\int_{1-\epsilon_1}^1r^{\al}\,d\al \ge \ep_1\delta_1 \int_0^1 r^\al \,d\al.
 \end{split}
\end{equation*}
Similarly, we may find a small $\ep_2>0$ such that $\min_{\al\in[0,\ep_2]} \mu(\al) \ge \delta_2>0$
and then for all $r< 1$
\begin{equation*}
\int_{0}^1 r^{\al}\mu(\al)\,d\al \ge \ep_2\delta_2\int_0^1 r^\al \,d\al.
\end{equation*}
Hence for $\fy \in (\theta-\pi,\pi-\theta)$, we get for $c_1=\min(\ep_1\delta_1,\ep_2\delta_2)$
\begin{equation*}
 \begin{split}
|zw(z)| &\ge \mathrm{\Re}(zw(z))
{ \geq \cos(\pi-\theta) \int_0^1 r^\al \mu (\al)\, d\al} \ge c_1 \cos(\pi-\theta) \int_0^1 r^\al\,d\al.
 \end{split}
\end{equation*}
Now it suffices to consider the case $\fy \in [\pi-\theta,\theta]$, and the case $\fy\in [-\theta,\theta-\pi]$
follows analogously. From \eqref{eqn:ep0}, we deduce
\begin{equation*}
|zw(z)| \ge \mathrm{\Re}(zw(z)) \ge \frac{\delta_0}{2} \int_0^1 r^\al\,d\al\quad \forall r\le r_0=\left( \frac{\delta_0}{2(\delta_0+\| \mu \|_{C[0,1]})} \right)^{1/\ep_0}.
\end{equation*}
Then a similar argument for deriving \eqref{eqn:rger0} shows that the inequality \eqref{eqn:key2} holds for $r\ge r_0$
and $\fy \in [\pi-\theta,\theta]$, thereby showing \eqref{eqn:key2}. The
inequality \eqref{eqn:key3} follows from
\begin{equation*}
  {\| \mu  \|_{C[0,1]}}\int_0^1  r^\al \,d\al \ge\int_0^1 r^\al \mu(\al) \,d\al=|z|w(|z|)
\end{equation*}
and the trivial inequality $|zw(z)|\leq |z|w(|z|)$.
\end{proof}

Now we give the main result of this section, namely, stability of
problem \eqref{eqn:fde} with $f\equiv 0$.
\begin{theorem}\label{thm:reg}
Let $\mu\in C[0,1]$ be a non-negative function with $\mu(0)\mu(1)>0$. Then the solution $u$ to problem \eqref{eqn:fde} with
$f\equiv0$ satisfies the following stability estimates for $t\in(0,T]$ and $\nu=0,1$:
\begin{eqnarray}
&&\|A^\nu S^{(m)}(t)v\|_{L^2(\Om)}\le c_T t^{-m-\nu}\ell_1(t)^{\nu}\|v\|_{L^2(\Om)},\ v\in L^2(\Om),
m\ge 0,\label{eqn:new1}\\
&&\|A^\nu S^{(m)}(t)v\|_{L^2(\Om)}\le c t^{-m+1-\nu} \ell_2(t)^{1-\nu} \|Av\|_{L^2(\Om)},\ v\in D(A), \nu+m\ge 1,\label{eqn:new2}
\end{eqnarray}
where $\ell_1(t)=(\log(2T/t))^{-1}$,  $\ell_2(t)=\log \left(\max(t^{-1},2)\right)$ and $c_T>0$ is a constant
that may depend on $d$, $\Om$, $\mu$, $M$, $m$ and $T$.
\end{theorem}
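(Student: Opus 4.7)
The plan is to invert the Laplace transform via the representation \eqref{eqn:interep}, differentiate under the integral sign to obtain
\[
A^\nu S^{(m)}(t)v = \frac{1}{2\pi \mathrm{i}}\int_{\Gamma_{\theta,\delta}} z^m e^{zt}\, A^\nu H(z)\, v\, dz,
\]
and estimate the resulting contour integral with the choice $\delta = 1/t$. Two operator bounds supply the main ingredients. First, Lemma \ref{lem:sector-w} places $zw(z)$ in a sector $\Sigma_{\theta'}$, so the resolvent estimate \eqref{eqn:resol} yields $\|(zw(z)I+A)^{-1}\|\le c/|zw(z)|$, from which $\|H(z)\|\le c|w(z)|/|zw(z)| = c/|z|$. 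Second, writing $A(zw(z)I+A)^{-1} = I - zw(z)(zw(z)I+A)^{-1}$ gives $\|AH(z)\|\le c|w(z)|$, and Lemma \ref{lem:mu} then yields $\|AH(z)\|\le c(|z|-1)/(|z|\log|z|)$.

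For the nonsmooth estimate \eqref{eqn:new1}, I parametrize $\Gamma_{\theta,1/t}$ as an arc $\{|z|=1/t\}$ together with two rays $\{z=\rho e^{\pm\mathrm{i}\theta}:\rho\ge 1/t\}$. For $\nu=0$ the bound $\|H(z)\|\le c/|z|$ gives the standard estimate $\|S^{(m)}(t)v\|_{L^2(\Om)} \le c t^{-m}\|v\|_{L^2(\Om)}$. For $\nu=1$, inserting the refined bound on $\|AH(z)\|$ and substituting $s=zt$ on the rays converts $\log|z|$ into $\log|s|+\log(1/t)$, whose reciprocal yields a factor bounded by $c/\log(2T/t) = c\,\ell_1(t)$; the analogous computation on the arc then completes \eqref{eqn:new1}.

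For the smooth-data estimate \eqref{eqn:new2}, the case $\nu=1$ follows directly by commuting $A$ with $S^{(m)}(t)$ and invoking the already-proved $\|S^{(m)}(t)\|\le c t^{-m}$. The essential case is $\nu=0$ with $m\ge 1$: writing $v=A^{-1}(Av)$ and using the resolvent identity
\[
(zw(z)I+A)^{-1}A^{-1} = \frac{1}{zw(z)}\bigl(A^{-1}-(zw(z)I+A)^{-1}\bigr),
\]
the integrand splits into two pieces. The $A^{-1}$-part yields $\bigl(\tfrac{1}{2\pi\mathrm{i}}\int_\Gamma z^{m-1}e^{zt}\,dz\bigr) A^{-1}(Av)$, which vanishes by Cauchy's theorem since $z^{m-1}$ is entire and $e^{zt}$ decays exponentially in the left half-plane for $t>0$. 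The remaining piece $-\tfrac{1}{2\pi\mathrm{i}}\int_\Gamma z^{m-1}e^{zt}(zw(z)I+A)^{-1}(Av)\,dz$ is bounded via $\|(zw(z)I+A)^{-1}\|\le c/|zw(z)| \le c\log|z|/(|z|-1)$ from Lemma \ref{lem:key2}, and the change of variables $s=zt$ then produces the additive $\log(1/t)$ responsible for $\ell_2(t)$. The main obstacle will be tracking the competing logarithmic factors---the $1/\log|z|$ of Lemma \ref{lem:mu} and the $\log|z|$ of Lemma \ref{lem:key2}---through the substitution $s=zt$, where they split into $\log|s|$ and $\log(1/t)$ pieces that must be separated carefully, together with the mild singularities of both bounds near $|z|=1$, which also forces the $T$-dependent constant $c_T$ in \eqref{eqn:new1}.
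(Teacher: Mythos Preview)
Your proposal is correct and follows the same overall strategy as the paper: the contour representation of $A^\nu S^{(m)}(t)$, the identity $AH(z)=(I-zH(z))w(z)$ together with Lemma~\ref{lem:mu} for the $\nu=1$ case of \eqref{eqn:new1}, and the resolvent splitting $(zw(z)I+A)^{-1}A^{-1}=\frac{1}{zw(z)}(A^{-1}-(zw(z)I+A)^{-1})$ combined with $\int_\Gamma z^{m-1}e^{zt}\,dz=0$ for \eqref{eqn:new2}.

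There is one tactical difference worth noting. For the $\nu=1$ case of \eqref{eqn:new1} the paper does \emph{not} keep $\delta=1/t$; it takes $\delta=2T/t$, which forces $|z|\ge 2$ on the whole contour. This makes $\log|z|\ge\log(2T/t)>0$ uniformly, so the factor $1/\log|z|$ in the bound $\|AH(z)\|\le c(|z|-1)/(|z|\log|z|)$ is immediately $\le \ell_1(t)$, and no separate discussion of the region $|z|\approx 1$ is needed. Your route via the substitution $s=zt$ gives $1/(\log|s|+\log(1/t))$, which is only directly bounded by $1/\log(1/t)$ when $t<1$; for $t$ of order one the denominator can vanish and you must instead fall back on the finiteness of $(|z|-1)/\log|z|$ near $|z|=1$ and absorb everything into $c_T$. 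You flag this issue yourself at the end, so it is not a gap, but the paper's choice of radius $2T/t$ is exactly the device that dispatches it in one line. For \eqref{eqn:new2} the paper also avoids a substitution and instead invokes the monotonicity of $r\mapsto \log r/(r-1)$ on $(0,\infty)$ to pull out the factor $\log(1/t)/(1-t)$ directly, which is then compared to $\ell_2(t)$ by a two-case argument ($t\le 1/2$ versus $t>1/2$); your substitution approach reaches the same destination but with more bookkeeping.
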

\begin{proof}
The existence and uniqueness of a weak solution was already shown in \cite{LiLuchkoYamamoto:2014}, and it suffices to show the
stability estimates \eqref{eqn:new1} and \eqref{eqn:new2}. First, by the resolvent estimate \eqref{eqn:resol}, we obtain
the following basic estimate on the kernel $H(z)$
\begin{equation*}
  \|H(z)\| = \|(zw(z)I + A)^{-1}\||w(z)|\leq M/|z| \quad\forall z\in \Sigma'_\theta.
\end{equation*}
Let $t>0$, $\theta \in (\pi/2,\pi)$, $\delta>0$. We choose $\delta=1/t$ and denote for short $\Gamma=\Gamma_{\theta,\delta}.$
First we derive the estimate \eqref{eqn:new1} for $\nu=0$ and $m\ge 0$. By the solution representation \eqref{eqn:interep}, we
deduce
\begin{equation*}
 \begin{aligned}
 \|S^{(m)}(t)\| &=\left\|
	\frac{1}{2\pi \mathrm{i}} \int_{\Gamma} z^m e^{zt} H(z)\, dz
		\right\|
    \le c\int_{\Gamma} |z|^m e^{\Re(z)t} \|H(z)\|\, |dz|\\
	&\le c\left( \int_{1/t}^\infty r^{m-1}e^{rt\cos\theta}  \,dr
  + \int_{-\theta}^{\theta}e^{\cos\psi}t^{-m}\,d\psi\right)\leq ct^{-m}.
	\end{aligned}
\end{equation*}
Next we prove estimate \eqref{eqn:new1} for $\nu=1$ and $m\ge 0$. To this end, we take $\delta=2T/t$ in the contour $\Gamma$.
By applying the operator $A$ to both sides of \eqref{eqn:interep} and differentiating with respect to time $t$ we arrive at
\begin{equation}\label{eqn:Asolop}
  AS^{(m)}(t)=\frac{1}{2\pi\mathrm{i}}\int_\Gamma z^m e^{z t} AH(z) dz.
\end{equation}
Owing to the identity
\begin{equation*}
  \begin{aligned}
   AH(z)&=A(zw(z)I+A)^{-1}w(z)    = (I-zH(z))w(z),
  \end{aligned}
\end{equation*}
it follows from Lemma \ref{lem:mu} that
\begin{equation}\label{eqn:AH}
  \left\|AH(z)\right\|\le c|w(z)|\le c\frac{|z|-1}{|z|\log|z|}\quad \forall z\in\Sigma_{\theta}'.
\end{equation}
Hence we obtain from \eqref{eqn:Asolop}
\begin{equation*}
  \begin{aligned}
    \|AS^{(m)}(t)\|  &\le c\int_{\Gamma} |z|^{m}\frac{|z|-1}{|z|\log |z|} e^{\Re(z)t} \, |dz| \\
    &{\le c \int_{2T/t}^\infty r^{m-1}\frac{r-1}{\log r}e^{rt\cos\theta}  \,dr
     +  c_Tt^{-m}\frac{2T/t-1}{\log (2T/t)} \int_{-\theta}^{\theta}e^{2T\cos\psi}\,d\psi}=:I+II.
  \end{aligned}
\end{equation*}
Since $2T/t \ge 2$, we can bound the first term $I$ by
\begin{equation*}
I \le c\int_{2T/t}^\infty r^{m}({\log r})^{-1}e^{rt\cos\theta}  \,dr
 \le c \ell_1(t) \int_{2T/t}^\infty r^{m}e^{rt\cos\theta}  \,dr \le c_Tt^{-m-1}\ell_1(t).
\end{equation*}
Meanwhile the second term $ II$ can be bounded by
\begin{equation*}
 II = c_Tt^{-m}({2T/t-1})/{\log (2T/t)}\le c_Tt^{-m}({2T/t})/{\log (2T/t)}=c_Tt^{-m-1}\ell_1(t).
\end{equation*}
This shows the first estimate \eqref{eqn:new1}. To prove the second estimate \eqref{eqn:new2} with $\nu=0$,
we choose $\delta=1/t$ and denote again $\Gamma=\Gamma_{\theta,\delta}$. Then
\begin{equation*}
  \begin{aligned}
    S^{(m)}(t)v & =\frac{1}{2\pi\mathrm{i}}\int_{\Gamma} z^m e^{zt} H(z) v \,dz  = \frac{1}{2\pi\mathrm{i}}\int_{\Gamma} z^{m-1}e^{zt} zA^{-1}H(z) Av dz.
  \end{aligned}
\end{equation*}
Upon noting the identity
\begin{equation*}
zA^{-1}H(z) = zw(z)A^{-1}(zw(z)I+A)^{-1}= A^{-1}-(zw(z)I+A)^{-1}
\end{equation*}
and the fact that $\int_{\Gamma} z^{m-1}e^{zt}\,dz=0$ for $m\ge 1$,
we have
\begin{equation*}
   \begin{aligned}
      S^{(m)}(t)v & = \frac{1}{2\pi\mathrm{i}}\int_{\Gamma}z^{m-1} e^{zt} v\, dz
      - \frac{1}{2\pi\mathrm{ i}}\int_{\Gamma} z^{m-1}e^{zt}(zw(z)I+A)^{-1} \,dzA v\\
      &=-\frac{1}{2\pi\mathrm{i}}\int_{\Gamma} z^{m-1}e^{zt}(zw(z)I+A)^{-1} \,dzA v.\\
   \end{aligned}
\end{equation*}
By \eqref{eqn:resol} and Lemma \ref{lem:key2} we obtain
\begin{equation*}
  \|(zw(z)I+A)^{-1}\| \le M |zw(z)|^{-1} \leq c\frac{\log|z|}{|z|-1},
\end{equation*}
and thus using this estimate and the the monotone decreasing property of the function $f(x)=-\frac{\log(x)}{1-x}$ on the
positive real axis $\mathbb{R}^+$, we get
\begin{equation*}
  \begin{aligned}
    \|S^{(m)}(t)v\|_{L^2(\Om)} & \le c\left(\int_{\Gamma} |z|^{m-1}e^{\Re(z)t}\|(zw(z)I+A)^{-1}\| \,|dz|\right)\| Av \|_{L^2(\Om)}\\
		&\le {c\left(\int_{1/t}^\infty e^{rt\cos\theta}r^{m-1}\frac{\log r}{r-1} \,dr
    + t^{-m}\frac{\log(1/t)}{1/t-1}\int_{-\theta}^{\theta} e^{\cos\psi} \,d\psi \right)
    \| Av \|_{L^2(\Om)}}\\
    &\le c t^{-m+1} \frac{\log (t^{-1})}{1-t}\| Av \|_{L^2(\Om)}.
  \end{aligned}
\end{equation*}
We observe that if $t_n^{-1}\ge2$, {\it i.e.} $t_n\le 1/2$, then
$\frac{\log(t_n^{-1})}{1-t_n}\le 2\log(t_n^{-1}).$
Otherwise if $t_n^{-1}<2$, {\it i.e.} $t_n\ge1/2$, then by the monotonicity of the
function $f(x)=\frac{\log(x)}{1-x}$ on $\mathbb{R}^+$,
$\frac{\log(t_n^{-1})}{1-t_n}= \frac{\log(t_n)}{t_n-1}\le 2\log(2).$
Then we deduce
\begin{equation*}
   \|S^{(m)}(t)v\|_{L^2(\Om)} \le c t^{-m+1} \ell_2(t)\| Av \|_{L^2(\Om)}. 
\end{equation*}
Lastly, note that (\ref{eqn:new2}) with $\nu=1$ is equivalent to (\ref{eqn:new1}) with $\nu=0$ and $v$ replaced by $Av$.
This completes the proof of the theorem.
\end{proof}
\begin{remark}\label{rem:shorttime}
The a priori estimate of the solution at short time is given in Theorem \ref{thm:reg}, in which the constant $c_T$
depends on the final time $T$ (see also \cite[Theorem 2.2]{LiLuchkoYamamoto:2014} for the special case $v\in L^2\II$, $\nu=1$ and $m=0$).
The long time asymptotic behavior of the solution in case of $v\in D(A)$ was given in \cite[Theorem 2.1]{LiLuchkoYamamoto:2014},
i.e., it decays like $(\log t)^{-1}$ as $t \rightarrow \infty$; see also \cite[example 6.5]{VergaraZacher:2015}
for related discussions on asymptotic decay.
\end{remark}

\section{Semidiscrete discretization by Galerkin FEM}\label{sec:semi}
Now we discuss the space semidiscrete scheme \eqref{eqn:semidiscrete} based on the Galerkin FEM.
On the finite element space $X_h$, we define the $L^2(\Om)$-orthogonal projection $P_h:L^2(\Om)\to X_h$ and
the Ritz projection $R_h:H^1_0(\Om)\to X_h$, respectively, by
\begin{equation*}
  \begin{aligned}
    (P_h \fy,\chi) & =(\fy,\chi) \quad\forall \chi\in X_h,\\
    (\nabla R_h \fy,\nabla\chi) & =(\nabla \fy,\nabla\chi) \quad \forall \chi\in X_h.
  \end{aligned}
\end{equation*}

The Ritz projection $R_h$ and the $L^2(\Om)$-projection $P_h$ have the following properties \cite{Thomee:2006}.
\begin{lemma}\label{lem:prh-bound}
Let the mesh $\mathcal{T}_h$ be quasi-uniform. Then the operators $R_h$ and $P_h$ satisfy:
\begin{eqnarray*}
  &   \| \fy-R_h \fy\|_{L^2(\Om)}+h\|\nabla( \fy-R_h \fy)\|_{L^2(\Om)}\le ch^q\| \fy\|_{ H^q(\Om)}\quad
   \forall\fy \in H_0^1\II\cap H^q(\Om), \ q=1,2,\\
  & \| \fy-P_h \fy\|_{L^2(\Om)}+h\|\nabla( \fy-P_h \fy)\|_{L^2(\Om)}\le ch^q\| \fy\|_{ H^q(\Om)}\quad
   \forall\fy \in H_0^1\II\cap H^q(\Om), \ q=1,2.
\end{eqnarray*}
In addition, $P_h$ is stable on $H_0^{q}\II$ for $0\le q \le1$.
\end{lemma}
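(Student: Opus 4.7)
The plan is to follow the classical finite element argument for linear elements on a quasi-uniform triangulation, as laid out in Thomée's monograph. All bounds will be reduced to two basic ingredients: the nodal (or Scott--Zhang) interpolation estimate
\begin{equation*}
\|\varphi - I_h\varphi\|_{L^2\II} + h\|\nabla(\varphi - I_h\varphi)\|_{L^2\II} \le ch^q\|\varphi\|_{H^q\II},\quad q=1,2,
\end{equation*}
and the global inverse inequality $\|\nabla\chi\|_{L^2\II}\le ch^{-1}\|\chi\|_{L^2\II}$ for $\chi\in X_h$, which relies precisely on the quasi-uniformity of $\mathcal{T}_h$.

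For the Ritz projection, the $H^1$-bound comes directly from Galerkin orthogonality: $R_h\varphi$ is the best $a(\cdot,\cdot)$-approximation in $X_h$, hence $\|\nabla(\varphi - R_h\varphi)\|_{L^2\II}\le \|\nabla(\varphi - I_h\varphi)\|_{L^2\II}\le ch^{q-1}\|\varphi\|_{H^q\II}$. For the $L^2$-bound I would apply the Aubin--Nitsche duality trick: let $\psi\in H^1_0\II$ solve $A\psi = \varphi - R_h\varphi$. Convexity of $\Om$ gives the $H^2$-regularity $\|\psi\|_{H^2\II}\le c\|\varphi - R_h\varphi\|_{L^2\II}$, and Galerkin orthogonality together with the $H^1$-estimate on $\varphi-R_h\varphi$ and the interpolation estimate on $\psi$ yields
\begin{equation*}
\|\varphi - R_h\varphi\|_{L^2\II}^2 = (\nabla(\varphi - R_h\varphi),\nabla(\psi - I_h\psi)) \le ch^q\|\varphi\|_{H^q\II}\|\varphi - R_h\varphi\|_{L^2\II},
\end{equation*}
from which the desired $O(h^q)$ bound follows.

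For $P_h$, the $L^2$-estimate is immediate from the best-approximation property $\|\varphi - P_h\varphi\|_{L^2\II}\le \|\varphi - I_h\varphi\|_{L^2\II}$ plus the interpolation bound. For the $H^1$-estimate I would write $\nabla(\varphi - P_h\varphi) = \nabla(\varphi - I_h\varphi) + \nabla(I_h\varphi - P_h\varphi)$ and use the inverse inequality on the discrete piece $I_h\varphi - P_h\varphi\in X_h$, bounding its $L^2$-norm by $\|\varphi - I_h\varphi\|_{L^2\II} + \|\varphi - P_h\varphi\|_{L^2\II}\le ch^q\|\varphi\|_{H^q\II}$; multiplying by $h^{-1}$ gives the required $O(h^{q-1})$ bound. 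The same decomposition establishes $H^1$-stability $\|\nabla P_h\varphi\|_{L^2\II}\le c\|\nabla\varphi\|_{L^2\II}$ for $\varphi\in H^1_0\II$; combined with the trivial $L^2$-stability and operator interpolation between $L^2\II$ and $H^1_0\II$, this yields stability on $H^q_0\II$ for all $0\le q\le 1$.

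The one genuinely nontrivial point is the $H^1$-stability of $P_h$, which is where the quasi-uniformity of $\mathcal{T}_h$ is essential (through the inverse inequality). Everything else is a routine combination of best-approximation, interpolation estimates, and duality. Since the statement is classical and a precise reference is given, I would only sketch the argument above and cite Chapters 1--2 of Thomée's book for the full details.
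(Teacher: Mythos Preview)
Your sketch is correct and follows the standard argument. Note, however, that the paper does not actually prove this lemma at all: it is stated as a classical result with a citation to Thom\'ee's monograph, and no proof is given. Your proposal to sketch the argument and then cite the reference is entirely appropriate, and the ingredients you identify (Galerkin orthogonality plus Aubin--Nitsche duality for $R_h$, best approximation plus the inverse inequality for $P_h$, and interpolation for the $H^q_0$ stability) are exactly the classical ones.
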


The space semidiscrete Galerkin scheme for problem \eqref{eqn:fde} reads: find $u_h(t)\in X_h$ such that
\begin{equation}\label{eqn:fem}
  (\Dmu u_h,\chi) + (\nabla u_h,\nabla \chi) = (f,\chi)\quad\forall \chi\in X_h,
\end{equation}
with $u_h(0)=v_h\in X_h$. Upon introducing the discrete Laplacian $\Delta_h: X_h\to X_h$ defined by
\begin{equation*}
  -(\Delta_h\fy,\chi)=(\nabla\fy,\nabla\chi)\quad\forall\fy,\,\chi\in X_h,
\end{equation*}
and $A_h=-\Delta_h$, the space semidiscrete Galerkin scheme \eqref{eqn:fem} can be rewritten as
\begin{equation}\label{eqn:fdesemidis}
  \Dmu u_h(t) +A_h u_h(t) = 0, \,\, t>0
\end{equation}
with $u_h(0)=v_h\in X_h$ and $A_h=-\Delta_h$.

For the error analysis of the semidiscrete scheme \eqref{eqn:fdesemidis}, we employ an operator trick due to Fujita and Suzuki \cite{FujitaSuzuki:1991}.
To this end, we first represent the semidiscrete solution $u_h$ to \eqref{eqn:fdesemidis} by
\begin{equation}\label{eqn:semi-interep}
    u_h(t)= S_h(t) v_h := \frac{1}{2\pi \mathrm{i}} \int_{\Gamma_{\theta,\delta}} e^{zt}(zw(z)I+A_h)^{-1}w(z) v_h\,dz.
\end{equation}

\begin{lemma}\label{lem:keylem}
For any $ \fyy \in H_0^1(\Om)$ and $z\in \Sigma_{\theta,\delta}$ for $\theta\in(\pi/2,\pi)$, there holds
\begin{equation}
    |zw(z)| \| \fyy \|_{L^2(\Om)}^2 + \| \nabla \fyy \|_{L^2(\Om)}^2
    \le c\left|zw(z)\| \fyy \|_{L^2(\Om)}^2 + \|\nabla\fyy \|^2 \right|.
\end{equation}
\end{lemma}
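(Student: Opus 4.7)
The plan is to reduce the statement to a purely scalar geometric inequality in the complex plane. Setting $\zeta := zw(z)$, $a := \|\fyy\|_{L^2(\Om)}^2 \ge 0$ and $b := \|\nabla\fyy\|_{L^2(\Om)}^2 \ge 0$, the claim becomes
\begin{equation*}
|\zeta|\,a + b \le c\,|\zeta a + b|
\end{equation*}
for any $\zeta$ in a suitable sector and any nonnegative reals $a,b$. The point is that $a$ and $b$ are real nonnegative, while $\zeta$ may be complex, so the geometry of how $\zeta a$ and $b$ combine is all that matters.

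The key input is Lemma \ref{lem:sector-w}: since $\mu(0)\mu(1)>0$ and $z\in\Sigma_\theta$ with $\theta\in(\pi/2,\pi)$, we have $\zeta = zw(z) \in \Sigma_{\theta'}$ for some $\theta' \in (\pi/2,\pi)$ depending only on $\theta$ and $\mu$. Thus writing $\zeta = |\zeta|e^{i\fy}$, we know $|\fy| \le \theta'$ and hence $\cos\fy \ge \cos\theta' =: \gamma$, where crucially $\gamma > -1$. I would then expand
\begin{equation*}
|\zeta a + b|^2 = (|\zeta|a\cos\fy + b)^2 + (|\zeta|a\sin\fy)^2 = |\zeta|^2 a^2 + 2|\zeta|ab\cos\fy + b^2 \ge |\zeta|^2 a^2 + 2\gamma|\zeta|ab + b^2,
\end{equation*}
and finally apply the elementary bound $2|\zeta|ab \le |\zeta|^2 a^2 + b^2$ to the cross term to absorb the possibly negative $\gamma$, giving
\begin{equation*}
|\zeta a + b|^2 \ge (1-|\gamma|)(|\zeta|^2 a^2 + b^2) \ge \tfrac{1-|\gamma|}{2}(|\zeta|a + b)^2.
\end{equation*}
Taking square roots yields the lemma with $c = \sqrt{2/(1-|\gamma|)}$, a constant depending only on $\theta$ and $\mu$.

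There is no real obstacle here; the whole argument is essentially the observation that a complex number confined to a proper sector (i.e., bounded away from the negative real axis) cannot cancel a positive real contribution in the sum $\zeta a + b$. The only item requiring care is to ensure the constant $c$ is finite, which is exactly where $\theta' < \pi$ (guaranteed by Lemma \ref{lem:sector-w}, and quantified in Remark \ref{rem:eparg}) enters; if one were to let $\theta \to \pi$, then $\theta' \to \pi$, $|\gamma| \to 1$, and $c$ would blow up, so the lemma's usefulness in later estimates does hinge on fixing $\theta$ strictly less than $\pi$.
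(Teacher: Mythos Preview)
Your proposal is correct and takes essentially the same approach as the paper. The paper itself omits the proof, noting only that with Lemma~\ref{lem:sector-w} in hand the argument is identical to that of \cite[Lemma 3.3]{BazhlekovaJinLazarovZhou:2014}; your reduction to the scalar inequality $|\zeta|a+b\le c|\zeta a+b|$ for $\zeta\in\Sigma_{\theta'}$ and $a,b\ge0$, followed by the elementary sector estimate, is exactly that standard argument.
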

\begin{proof}
With Lemma \ref{lem:sector-w}, the proof is identical to that of \cite[Lemma 3.3]{BazhlekovaJinLazarovZhou:2014}, and hence omitted.
\end{proof}

Now we introduce the error function $e(t):=u(t)-u_h(t)$  which, in view of \eqref{eqn:interep} and  \eqref{eqn:semi-interep}, can be represented by \begin{equation}\label{error-e}
e(t)=\frac1{2\pi\mathrm{i}}\int_{\Gamma_{\theta,\delta}} e^{zt} w(z) (\hfy(z) - \hfy_h(z) ) \,dz,
\end{equation}
with $\hfy(z)=(zw(z)I+A)^{-1}v$ and $\hfy_h(z)=(zw(z)I+A_h)^{-1}P_h v$.
The following lemma shows a bound on the error $\hfy_h-\hfy $. It follows directly from Lemma \ref{lem:keylem},
similar to \cite[Lemma 3.4]{BazhlekovaJinLazarovZhou:2014}, and hence the proof is omitted.
\begin{lemma}\label{lem:wbound}
Let $ v\in L^2(\Om) $,  $z\in \Sigma_{\theta}$ with $\theta\in(\pi/2,\pi)$,
 $\hfy(z)=(zw(z)I+A)^{-1}v$ and $\hfy_h(z)=(zw(z)I+A_h)^{-1}P_h v$. Then there holds
\begin{equation}\label{eqn:wboundHa}
    \|  \hfy(z) - \hfy_h(z) \|_{L^2(\Om)} + h\| \nabla (\hfy(z) - \hfy_h(z) ) \|_{L^2(\Om)}
    \le ch^2 \| v  \|_{L^2(\Om)}.
\end{equation}
\end{lemma}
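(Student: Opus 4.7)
The plan is to follow the Fujita--Suzuki style Galerkin argument for resolvent-type elliptic problems, treating $\lambda := zw(z)$ as the spectral parameter. By Lemma \ref{lem:sector-w}, $\lambda\in\Sigma_{\theta'}$ with $\theta'\in(\pi/2,\pi)$, so the resolvent estimate \eqref{eqn:resol} applies to $\lambda$ and, by a parallel argument, to the discrete operator $A_h$. Introducing the sesquilinear form $B(\fy,\chi)=\lambda(\fy,\chi)+(\nabla\fy,\nabla\chi)$, the characterizations of $\hfy$ and $\hfy_h$ can be written as $B(\hfy,\chi)=(v,\chi)$ for all $\chi\in H_0^1(\Om)$ and $B(\hfy_h,\chi)=(P_hv,\chi)=(v,\chi)$ for all $\chi\in X_h$, yielding the Galerkin orthogonality $B(e,\chi)=0$ for all $\chi\in X_h$.

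Next I would split the error as $e=\eta+\xi$ with $\eta:=\hfy-R_h\hfy$ and $\xi:=R_h\hfy-\hfy_h\in X_h$. Elliptic regularity in the convex polygon, together with the resolvent bound $|\lambda|\|\hfy\|_{L^2}\le M\|v\|_{L^2}$, gives $\|\hfy\|_{H^2}\le c\|A\hfy\|_{L^2}\le c(\|v\|+|\lambda|\|\hfy\|)\le c\|v\|$, so Lemma \ref{lem:prh-bound} yields $\|\eta\|+h\|\nabla\eta\|\le ch^2\|v\|$. Using $(\nabla\eta,\nabla\chi)=0$ for $\chi\in X_h$ (the defining property of $R_h$) and Galerkin orthogonality, I obtain $B(\xi,\chi)=-\lambda(\eta,\chi)$ for all $\chi\in X_h$. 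Testing with $\chi=\xi$ and invoking Lemma \ref{lem:keylem} gives the key inequality
\begin{equation*}
  |\lambda|\|\xi\|_{L^2}^2+\|\nabla\xi\|_{L^2}^2\le c|B(\xi,\xi)|=c|\lambda||(\eta,\xi)|\le c|\lambda|\|\eta\|\|\xi\|,
\end{equation*}
from which $\|\xi\|_{L^2}\le c\|\eta\|_{L^2}\le ch^2\|v\|_{L^2}$ and hence $\|e\|_{L^2}\le ch^2\|v\|_{L^2}$ follow at once by the triangle inequality.

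For the $H^1$ bound, the same inequality only gives $\|\nabla\xi\|_{L^2}\le c|\lambda|^{1/2}\|\eta\|_{L^2}\le c|\lambda|^{1/2}h^2\|v\|_{L^2}$, which together with $\|\nabla\eta\|\le ch\|v\|$ produces $\|\nabla e\|_{L^2}\le c(1+|\lambda|^{1/2}h)h\|v\|_{L^2}$. This is the one step where care is needed: the estimate must be uniform in $\lambda\in\Sigma_{\theta'}$, so I expect this to be the main obstacle. The remedy is a case split. When $|\lambda|h^2\le 1$, the above already gives $\|\nabla e\|\le ch\|v\|$. When $|\lambda|h^2\ge 1$, the Galerkin estimate degenerates, so I would instead use the resolvent bound directly: the identity $\|\nabla\hfy\|^2=\mathrm{Re}(A\hfy,\hfy)=\mathrm{Re}((v-\lambda\hfy),\hfy)$ combined with $\|\hfy\|\le M|\lambda|^{-1}\|v\|$ yields $\|\nabla\hfy\|_{L^2}\le c|\lambda|^{-1/2}\|v\|_{L^2}$, and the same bound holds for $\hfy_h$ by the analogous resolvent estimate for $A_h$ (which follows from Lemma \ref{lem:keylem} applied in $X_h$). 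Hence $\|\nabla e\|\le c|\lambda|^{-1/2}\|v\|\le ch\|v\|$ in this regime. Combining both cases delivers $h\|\nabla e\|_{L^2}\le ch^2\|v\|_{L^2}$, completing the argument.
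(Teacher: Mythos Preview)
Your proposal is correct and follows essentially the same Fujita--Suzuki approach that the paper invokes (the paper omits the proof and refers to Lemma~\ref{lem:keylem} and \cite[Lemma~3.4]{BazhlekovaJinLazarovZhou:2014}). Your case split $|\lambda|h^2\lessgtr 1$ to make the $H^1$ estimate uniform in $\lambda$ is precisely the standard way this is handled, and all the intermediate steps---Galerkin orthogonality, the $R_h$-splitting, the coercivity from Lemma~\ref{lem:keylem}, and the direct resolvent bounds on $\nabla\hfy$ and $\nabla\hfy_h$ in the large-$|\lambda|$ regime---are carried out correctly.
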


Now we can state an error estimate for nonsmooth initial data $v\in L^2\II$.
\begin{theorem}\label{thm:nonsmooth-initial-op}
Let $u$ and $u_h$ be the solutions of problem \eqref{eqn:fde} and \eqref{eqn:fdesemidis} with $v \in L^2\II$
and $v_h=P_h v$, respectively. Then for $t>0$ and $\ell_1(t)=\log(2T/t)^{-1}$, there holds
\begin{equation*}
  \| u(t)-u_h(t) \|_{L^2\II} + h\| \nabla (u(t)-u_h(t)) \|_{L^2\II}
  \le c_Th^{2}t^{-1} \ell_1(t) \|v\|_{L^2\II}.
\end{equation*}
\end{theorem}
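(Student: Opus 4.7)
The plan is to combine the contour representation of the error in \eqref{error-e} with the resolvent-difference bound of Lemma \ref{lem:wbound} and the kernel bound of Lemma \ref{lem:mu}, and then estimate the resulting contour integral by imitating the contour choice $\delta=2T/t$ used in the proof of \eqref{eqn:new1} with $\nu=1$ in Theorem \ref{thm:reg}. Indeed, a direct application of Lemma \ref{lem:wbound} inside \eqref{error-e} bounds both $\|e(t)\|_{L^2\II}$ and $h\|\nabla e(t)\|_{L^2\II}$ by
\begin{equation*}
  c h^{2}\|v\|_{L^2\II}\int_{\Gamma_{\theta,\delta}} e^{\Re(z)t}|w(z)|\,|dz|,
\end{equation*}
so the whole task reduces to showing $\int_{\Gamma_{\theta,\delta}} e^{\Re(z)t}|w(z)|\,|dz|\le c_T t^{-1}\ell_1(t)$.

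I would do this by taking $\delta=2T/t$, which guarantees $|z|\ge 2T/t\ge 2$ on the entire contour $\Gamma_{\theta,\delta}$ and hence $\log|z|>0$, so that Lemma \ref{lem:mu} applies and gives the useful upper bound $|w(z)|\le c(|z|-1)/(|z|\log|z|)\le c/\log|z|$. On the two rays $z=re^{\pm i\theta}$ with $r\ge 2T/t$, monotonicity of $\log$ yields $1/\log r\le \ell_1(t)$, so
\begin{equation*}
  \int_{2T/t}^\infty e^{rt\cos\theta}|w(re^{\pm i\theta})|\,dr \le c\,\ell_1(t)\int_{2T/t}^\infty e^{rt\cos\theta}\,dr \le c\,t^{-1}\ell_1(t),
\end{equation*}
using $\cos\theta<0$ for $\theta\in(\pi/2,\pi)$. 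On the arc $|z|=2T/t$, the bound $|w(z)|\le c\ell_1(t)$ holds everywhere, and the arclength is $2\theta\cdot(2T/t)$, so the arc contribution is controlled by
\begin{equation*}
  c\,\ell_1(t)\,(2T/t)\int_{-\theta}^{\theta}e^{2T\cos\psi}\,d\psi \le c_T\,t^{-1}\ell_1(t).
\end{equation*}
Adding the two contributions yields the required bound on the contour integral and hence the theorem.

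The main technical point, rather than a real obstacle, is the choice $\delta=2T/t$ in place of the more customary $\delta=1/t$: this is precisely what transforms the factor $1/\log|z|$ coming from Lemma \ref{lem:mu} into the factor $\ell_1(t)=1/\log(2T/t)$ appearing in the statement, and it mirrors the analogous contour shift used for $\nu=1$ in the proof of Theorem \ref{thm:reg}. Once this is recognized, the remainder is a routine combination of Lemmas \ref{lem:wbound} and \ref{lem:mu} analogous to the nonsmooth-data analysis for the standard parabolic problem \cite{Thomee:2006} and the single-term subdiffusion case treated in \cite{BazhlekovaJinLazarovZhou:2014}, with the extra $\ell_1(t)$ factor quantifying the weaker smoothing of the distributed-order operator.
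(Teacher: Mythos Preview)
Your proposal is correct and follows essentially the same route as the paper: both combine the error representation \eqref{error-e}, the resolvent-difference bound of Lemma \ref{lem:wbound}, and the kernel bound of Lemma \ref{lem:mu}, choose $\delta=2T/t$ in the contour, and estimate the ray and arc contributions exactly as you describe. The only cosmetic difference is that the paper writes out the $H^1\II$ estimate first and then remarks that the $L^2\II$ case is analogous, whereas you treat $\|e(t)\|_{L^2\II}+h\|\nabla e(t)\|_{L^2\II}$ in one stroke via Lemma \ref{lem:wbound}.
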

\begin{proof}
In the error representation \eqref{error-e}, by choosing $\delta=2T/t$ in the
contour $\Gamma_{\theta,\delta}$ and appealing to Lemmas \ref{lem:wbound} and \ref{lem:mu}, we deduce
\begin{equation*}
 \begin{split}
  \| \nabla e(t)  \|_{L^2\II} &\le ch \int_{2T/t}^\infty e^{rt\cos\theta} \frac{r-1}{r\log r} dr \| v \|_{L^2\II}
+ ch \int_{-\theta}^{\theta} e^{2T\cos\psi} \frac{2T/t-1}{\log (2T/t)} d\psi \| v \|_{L^2\II} :=I+II.
 \end{split}
\end{equation*}
Now the first term $I$ can be bounded by
\begin{equation*}
  \begin{aligned}
   I \le  ch \int_{2T/t}^\infty e^{rt\cos\theta} \frac{1}{\log r} dr \| v \|_{L^2\II}
    \le \frac{ch}{\log (2T/t)} \int_{2T/t}^\infty e^{rt\cos\theta}dr \le c_Th t^{-1} \ell_1(t) \| v  \|_{L^2\II},
  \end{aligned}
\end{equation*}
and the second term $II$ is bounded by
\begin{equation*}
  II \le \frac{c_Th}{t\log (2T/t)} \int_{-\theta}^{\theta} e^{2T\cos\psi} d\psi \| v \|_{L^2\II} \le c_Th t^{-1} \ell_1(t) \| v \|_{L^2\II}.
\end{equation*}
The bound on $\| \nabla e(t) \|_{L^2\II}$ now follows by the triangle inequality. A similar
argument yields the desired $L^2\II$ error estimate.
\end{proof}

Next we turn to the case of smooth initial data, {\it i.e.}, $Av\in L^2(\Om)$, and derive the following error estimate.
\begin{theorem}\label{thm:smooth-initial-op}
Let $u$ and $u_h$ be the solutions of problem \eqref{eqn:fde} and \eqref{eqn:fdesemidis} with $v \in \dH2$
and $v_h=R_h v$, respectively. Then for $t>0$, there holds:
\begin{equation}\label{eqn:smooth-initial-op}
  \| u(t)-u_h(t) \|_{L^2\II} + h\| \nabla (u(t)-u_h(t)) \|_{L^2\II} \le ch^{2} \|Av\|_{L^2(\Om)}.
\end{equation}
\end{theorem}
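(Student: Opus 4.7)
The plan is to mimic the contour-integral argument used for Theorem \ref{thm:nonsmooth-initial-op} but first rewrite both $u(t)$ and $u_h(t)$ in a form that exposes $Av$ rather than $v$ on the right, so that the singular weight $|w(z)|$ cancels against a factor of $1/z$ and no $t$-dependent singularity survives.

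First I would use the defining relation of the Ritz projection, $A_hR_h\fy=P_hA\fy$ for $\fy\in H_0^1\II\cap H^2\II$, to write $v_h=R_hv=A_h^{-1}P_h(Av)$ and $v=A^{-1}(Av)$. Applied to the resolvent representations \eqref{eqn:interep} and \eqref{eqn:semi-interep}, the commuting operators $A$, $(zw(z)I+A)^{-1}$ (and likewise their discrete counterparts) yield the identity
\begin{equation*}
(zw(z)I+A)^{-1}w(z)A^{-1}=\tfrac{1}{z}\bigl[A^{-1}-(zw(z)I+A)^{-1}\bigr],
\end{equation*}
and analogously with $A_h$ in place of $A$. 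Combined with the Cauchy-type identity $\tfrac{1}{2\pi\mathrm{i}}\int_{\Gamma_{\theta,\delta}}e^{zt}z^{-1}\,dz=1$ (already exploited in the proof of Theorem \ref{thm:reg}), this gives
\begin{equation*}
S(t)v=v-\tfrac{1}{2\pi\mathrm{i}}\int_{\Gamma_{\theta,\delta}}\tfrac{e^{zt}}{z}(zw(z)I+A)^{-1}Av\,dz,
\end{equation*}
and the corresponding formula for $S_h(t)R_hv$. Subtracting produces the clean splitting
\begin{equation*}
e(t)=(v-R_hv)-\tfrac{1}{2\pi\mathrm{i}}\int_{\Gamma_{\theta,\delta}}\tfrac{e^{zt}}{z}\bigl[\hfy(z)-\hfy_h(z)\bigr]dz,
\end{equation*}
where now $\hfy(z)=(zw(z)I+A)^{-1}(Av)$ and $\hfy_h(z)=(zw(z)I+A_h)^{-1}P_h(Av)$.

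The first term is immediately controlled by the standard Ritz estimate from Lemma \ref{lem:prh-bound}: $\|v-R_hv\|_{L^2\II}+h\|\nabla(v-R_hv)\|_{L^2\II}\le ch^2\|v\|_{H^2\II}\le ch^2\|Av\|_{L^2\II}$, using $H^2$-elliptic regularity on the convex polygonal domain. For the contour integral, applying Lemma \ref{lem:wbound} with data $Av\in L^2\II$ yields the pointwise bound $\|\hfy(z)-\hfy_h(z)\|_{L^2\II}+h\|\nabla(\hfy(z)-\hfy_h(z))\|_{L^2\II}\le ch^2\|Av\|_{L^2\II}$. Choosing $\delta=1/t$ in the contour $\Gamma_{\theta,\delta}$, a direct computation (substitute $\rho=rt$ on the rays) shows
\begin{equation*}
\int_{\Gamma_{\theta,\delta}}\tfrac{|e^{zt}|}{|z|}\,|dz|\le 2\int_1^\infty\tfrac{e^{\rho\cos\theta}}{\rho}\,d\rho+\int_{-\theta}^{\theta}e^{\cos\psi}\,d\psi\le c,
\end{equation*}
i.e., a constant independent of $t$. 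Combining these with the splitting above then gives $\|e(t)\|_{L^2\II}+h\|\nabla e(t)\|_{L^2\II}\le ch^2\|Av\|_{L^2\II}$, which is the desired estimate \eqref{eqn:smooth-initial-op}.

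The only nontrivial part, I expect, is verifying the resolvent identity cleanly in the context of $w(z)$ and carrying along the Ritz commutation $A_hR_h=P_hA$ correctly; once this is done, both the Ritz error and the contour-integral bound are routine. The essential conceptual gain over Theorem \ref{thm:nonsmooth-initial-op} is that the problematic weight $|w(z)|$, which in the nonsmooth case forced the $t^{-1}\ell_1(t)$ factor via Lemma \ref{lem:mu}, is now absorbed into the benign $1/z$ factor, so that the kernel integral is uniformly bounded in $t$ and no temporal singularity appears on the right-hand side.
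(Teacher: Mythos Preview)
Your proposal is correct and follows essentially the same route as the paper's proof: both use the resolvent identity $w(z)(zw(z)I+A)^{-1}=z^{-1}I-z^{-1}(zw(z)I+A)^{-1}A$ (and its discrete analogue via $A_hR_h=P_hA$) to replace the singular weight $w(z)$ by the benign factor $z^{-1}$, then invoke Lemma~\ref{lem:wbound} with data $Av$ and bound the resulting contour integral with $\delta=1/t$. The only cosmetic difference is that you evaluate the term $\tfrac{1}{2\pi\mathrm{i}}\int_{\Gamma_{\theta,\delta}}e^{zt}z^{-1}(v-R_hv)\,dz$ explicitly as $v-R_hv$ via the Cauchy identity and then apply Lemma~\ref{lem:prh-bound}, whereas the paper leaves this piece inside the contour integral and bounds both terms together by the same $\int_{\Gamma_{\theta,\delta}}|e^{zt}||z|^{-1}\,|dz|\le c$ estimate; either way gives $ch^2\|Av\|_{L^2\II}$.
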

\begin{proof}
Like before, we take $\theta \in (\pi/2,\pi)$ and $\delta=1/t$ in the contour $\Gamma_{\theta,\delta}$. Then the error $e_h(t)=u(t)-u_h(t)$ can be represented by
\begin{equation*}
e_h(t)=\frac1{2\pi\mathrm{i}}\int_{\Gamma_{\theta,\delta}} e^{zt} w(z) \left((zw(z)I+A)^{-1}-(zw(z)I+A_h)^{-1}R_h\right)v \,dz.
\end{equation*}
Using the identity
\begin{equation*}
  w(z) (zw(z)I+A)^{-1} = z^{-1}I-z^{-1}(zw(z)I+A)^{-1}A,
\end{equation*}
we deduce
\begin{equation}\label{eqn:smooth-er-rep}
  \begin{split}
    e_h(t)=\frac1{2\pi\mathrm{i}}\left(\int_{\Gamma_{\theta,\delta}} e^{zt} z^{-1} (\hfy_h(z)-\hfy(z)) \,dz
    + \int_{\Gamma_{\theta,\delta}} e^{zt} z^{-1} (v-R_hv) \,dz\right),
  \end{split}
\end{equation}
where $\hfy(z)=(zw(z)I+A)^{-1}A v$ and $\hfy_h(z)=(zw(z)I+A_h)^{-1}A_hR_hv$.
Then Lemmas \ref{lem:prh-bound} and \ref{lem:wbound}, and the identity $A_hR_h=P_h A$ give
\begin{equation*}
  \| \hfy(z)-\hfy_h(z) \|_{L^2\II} + h\| \nabla (\hfy(z)-\hfy_h(z)) \|_{L^2\II}
  \le ch^{2} \| Av \|_{L^2\II}.
\end{equation*}
Now it follows from this and the representation \eqref{eqn:smooth-er-rep} that
\begin{equation*}
\begin{split}
  \|e_h(t)\| &\le  ch^2 \| Av \|_{L^2\II}\left(\int_{1/t}^\infty e^{rt\cos\theta} r^{-1} \,dr
      + \int_{-\theta}^{\theta} e^{\cos\psi} \,d\psi \right)\le c h^2 \| Av \|_{L^2\II},
\end{split}
\end{equation*}
which gives the $L^2\II$-error estimate. The $H^1\II$ estimate follows analogously.
\end{proof}

\begin{remark}
The error estimate for nonsmooth initial data $v\in L^2\II$ deteriorates like $t^{-1}\ell_1(t)$ as $t\to0^+$. The behavior
agrees with the solution singularity in Theorem \ref{thm:reg}. The factor $t^{-1}\ell_1(t)$ is different from that for subdiffusion
\cite{JinLazarovZhou:2013} and multi-term time fractional diffusion \cite{JinLazarovLiuZhou:2014}. In contrast, for smooth
initial data $Av\in L^2(\Om)$, the error estimate is uniform in $t$.
\end{remark}

\section{Fully discrete scheme I: Laplace transform}\label{sec:Laplace}
The first fully discrete scheme is based on the Laplace transform. To this end, we select a proper contour $\Gamma_{
\theta,\delta}$ in the integral representation \eqref{eqn:semi-interep} of the semidiscrete solution $u_h$, and then apply a quadrature
rule. We follow the works \cite{gavrilyuk2002mathcal,
Lopez-FernandezPalencia:2006,mclean2010numerical,SheenSloanThomee:2000,SheenSloanThomee:2003,WeidemanTrefethen:2007}
and deform the contour $\G_{\theta,\delta}$ to
be a curve with the following parametric representation
\begin{equation}\label{eqn:curve}
 z(\xi):= \lambda(1+\sin(\mathrm{i} \xi-\psi )),
\end{equation}
with $\lambda>0$, $\psi\in(0,\pi/2)$ and $\xi \in \mathbb{R}$. The optimal choices of $\lambda$ and $\psi$ will be given
below, in the proof of Lemma \ref{lem:quad-error}. This deformation is valid since it does not transverse
the poles of the kernel function $H(z)v=(zw(z)+A_h)^{-1}w(z)v$, \emph{cf.}, Lemma \ref{lem:sector-w} and Lemma \ref{lem:quad-error2}
below. Upon letting $z=x+\mathrm{i} y$, we deduce that the contour \eqref{eqn:curve} is the left branch of the hyperbola
\begin{equation}\label{eqn:hyper}
 \left(\frac{x-\la}{\la\sin\psi}\right)^2 -  \left(\frac{y}{\la\cos\psi}\right)^2 = 1,
\end{equation}
which intersects the real axis at $x=\la(1\pm\sin\psi)$ and has asymptotes $ y=\pm (\lambda-x)\cot\psi. $
Now we can represent the semidiscrete solution $u_h(t)$ by
\begin{equation}\label{eqn:interep2}
  u_h(t) = \int_{-\infty}^\infty {\hg}(\xi,t) \,d\xi
\end{equation}
with the integrand $\hg(\xi,t)$ being defined by
\begin{equation}\label{eqn:interep3}
{\hg}(\xi,t)=\frac{1}{2\pi\mathrm{i}}e^{z(\xi)t}\left(z(\xi)w(z(\xi))I+A_h\right)^{-1}w(z(\xi))z'(\xi)v_h.
\end{equation}

\begin{remark}
The integrand ${\hg}(\xi,t)$ exhibits a double
exponential decay as $|\xi|\to\infty$ for $t>0$.
\end{remark}

Now we describe the quadrature rule for approximating \eqref{eqn:interep2}. By setting $z_j=z(\xi_j)$ and $z_j':=z'(\xi_j)$
with $\xi_j=jk$ and $k$ being the step size, we have the following quadrature approximation
\begin{equation}\label{eqn:quad0}
 U_{h}(t)= \frac{k}{2\pi\mathrm{i}}\sum_{j=-\infty}^{\infty}e^{z_jt} {\hphi}_j z'_jv_h,
\end{equation}
and the truncated quadrature approximation
\begin{equation}\label{eqn:trunc-quad}
 U_{N,h}(t)= \frac{k}{2\pi\mathrm{i}}\sum_{j=-N}^{N}e^{z_jt} {\hphi}_j z'_j,
\end{equation}
with ${\hphi}_j = \left(z_jw(z_j)I+A_h\right)^{-1}w(z_j) v_h$. To compute $U_{N,h}(t)$, we need to solve
only $N+1$ elliptic problems, instead of $2N+1$ elliptic problems, by exploiting the conjugacy relations: $z_{-j} = \overline{z_j}$, $w(z_{-j}) =
\overline{w(z_{-j})}$, ${\hphi}_{-j} = \overline{{\hphi}_j}$, $j = 1, \cdots, N.$
Indeed, since $z'_j = z'(\xi_j) = \mathrm{i}\lambda \cos(\mathrm{i}\xi_j-\psi),$
denoting by $\zeta_j = \lambda \cos(\mathrm{i}\xi_j-\psi)$,
\eqref{eqn:trunc-quad} is reduced to
\begin{equation}\label{eqn:trun-quad-symm}
 U_{N,h}(t)= \frac{k}{\pi}\left[\frac12 e^{z_0t} {\hphi}_0 \zeta_0 + \sum_{j=1}^{N}\Re\{e^{z_jt} {\hphi}_j \zeta_j\}\right],
\end{equation}
Hence we solve the following complex--valued elliptic problems
\begin{equation}\label{eqn:ellip}
 \left(z_jw(z_j)I+A_h\right){\hphi}_j = w(z_j) v_h,\quad j=0,\ldots,N.
\end{equation}
These problems are independent of each other and can be solved in parallel, if desired.

Next, we define a strip $\cS_{a,b}\subset \mathbb C$ by
\begin{equation*}
  \cS_{a,b} = \{ p=\xi+\mathrm{i} \eta:~~\text{for all}~~\xi\in\mathbb{R}~~\text{and}~~\eta\in(-b,a)  \}.
\end{equation*}
The following lemma
recalls a known error estimate for the quadrature \cite{Martensen:1968} \cite[Theorem 2.1]{WeidemanTrefethen:2007}.
The quadrature is exponentially convergent, provided that the integrand $g$ is analytic on
a strip $\cS_{a,b}$ with some additional conditions.
\begin{lemma}\label{lem:keylem-quad}
Let $g$ be an analytic function in a strip $\cS_{a,b}$ for some $a,b>0$, and $I$ and $I_k$, for $k>0,$ be defined by
\begin{equation*}
I=\int_{-\infty}^\infty g(x) \,dx\quad\mbox{and} \quad I_k= k\sum_{j=-\infty}^\infty g(jk),
\end{equation*}
respectively. Furthermore, assume that $g(z)\rightarrow 0$ uniformly as $|z|\rightarrow \infty$ in the strip $\cS_{a,b},$
and that there exist $M_+>0$ and $M_->0$, which may depend on $a$ and $b$ such that
\begin{equation*}
   \lim_{r\rightarrow a^-} \int_{-\infty}^\infty|g(x+\mathrm{i} r)| \,dx\le M_+, \quad \lim_{s\rightarrow b^-}\int_{-\infty}^\infty |g(x-\mathrm{i} s)|\,dx\le M_-.
\end{equation*}
Then the approximation error can be bounded by
\begin{equation*}
    |I-I_k|\le E^++E^-,
\end{equation*}
where
\begin{equation*}
    E^+=\frac{M_+}{e^{2a\pi/k}-1} \quad \text{and} \quad E^-=\frac{M_-}{e^{2b\pi/k}-1}.
\end{equation*}
\end{lemma}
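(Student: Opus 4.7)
The plan is the classical contour--integration argument: I would introduce an auxiliary meromorphic function whose simple poles coincide exactly with the quadrature nodes $z=jk$, apply Cauchy's theorem on a horizontal rectangle contained in the strip $\cS_{a,b}$, and push the vertical sides to $\pm\infty$. The two surviving horizontal integrals each carry a prefactor that decays exponentially in $a/k$ and $b/k$, yielding the asserted bound.

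Concretely, the auxiliary function I would use is
\begin{equation*}
  \psi(z) := \frac{1}{1 - e^{-2\pi\mathrm{i}z/k}},
\end{equation*}
which is meromorphic on $\mathbb C$ with simple poles at each $z=jk$, $j\in\mathbb Z$, of residue $k/(2\pi\mathrm{i})$. Integrating $g(z)\psi(z)$ counterclockwise around the rectangle with corners $\pm R - \mathrm{i}b'$ and $\pm R + \mathrm{i}a'$ for fixed $0<a'<a$, $0<b'<b$, and letting $R\to\infty$ (the vertical sides vanish because $g$ tends to $0$ uniformly in the strip while $\psi$ stays bounded there), the residue theorem gives
\begin{equation*}
  I_k \;=\; k\sum_{j\in\mathbb{Z}} g(jk) \;=\; \int_{-\infty-\mathrm{i}b'}^{\infty-\mathrm{i}b'} g(z)\psi(z)\,dz \;-\; \int_{-\infty+\mathrm{i}a'}^{\infty+\mathrm{i}a'} g(z)\psi(z)\,dz.
\end{equation*}

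To isolate $I$, I would then use the elementary identity $\psi(z) = 1 - 1/(1-e^{2\pi\mathrm{i}z/k})$, valid off the real axis, to split the lower integral into $\int g\,dz$ plus a remainder. The $\int g\,dz$ piece is independent of the horizontal displacement $b'$ by Cauchy's theorem (the decay of $g$ in the strip justifies shifting the contour back to $\mathbb R$), hence equals $I$. This produces the clean error identity
\begin{equation*}
  I_k - I \;=\; -\!\int_{-\infty-\mathrm{i}b'}^{\infty-\mathrm{i}b'}\!\frac{g(z)}{1-e^{2\pi\mathrm{i}z/k}}\,dz \;-\; \int_{-\infty+\mathrm{i}a'}^{\infty+\mathrm{i}a'}\!\frac{g(z)}{1-e^{-2\pi\mathrm{i}z/k}}\,dz.
\end{equation*}
On the line $\Im z = a'$ one has $|e^{-2\pi\mathrm{i}z/k}|=e^{2\pi a'/k}$, so the reciprocal factor in the second integral is bounded by $1/(e^{2\pi a'/k}-1)$; symmetrically, the factor in the first integral is bounded by $1/(e^{2\pi b'/k}-1)$ on $\Im z = -b'$. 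Pulling these constants out of the line integrals, invoking the hypothesis on $\int|g(x\pm\mathrm{i}r)|\,dx$, and finally letting $a'\to a^-$ and $b'\to b^-$ to absorb the resulting $L^1$ constants into $M_\pm$, yields $|I-I_k| \le E^+ + E^-$.

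There is no serious obstacle—this is textbook material—but the one minor technical point worth flagging is that the hypotheses provide the $L^1$ bounds only in the \emph{limit} as one approaches $\Im z = a$ and $\Im z = -b$, not on the boundary lines themselves. This is precisely why the contour argument must first be run for strict interior levels $a'<a$, $b'<b$, and the limit passed only at the very end.
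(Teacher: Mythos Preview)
Your argument is correct and is precisely the classical residue-theorem proof of the trapezoidal-rule error bound. Note, however, that the paper does not actually prove this lemma: it simply \emph{recalls} the result as known, citing Martensen (1968) and Weideman--Trefethen (2007, Theorem~2.1). Your contour-integration sketch with the auxiliary function $\psi(z)=1/(1-e^{-2\pi\mathrm{i}z/k})$, the rectangle in $\cS_{a',b'}$, the splitting identity $\psi(z)=1-1/(1-e^{2\pi\mathrm{i}z/k})$, and the final limit $a'\to a^-$, $b'\to b^-$ is exactly the standard proof found in those references, so there is nothing to compare against in the paper itself.
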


The next lemma gives one crucial estimate on the map $z(p)$ over the strip $\mathcal{S}_{a,b}$. Even though
the hyperbolic contour \eqref{eqn:curve} has been extensively used, the estimate on the map $z(p)$ below
seems to be new and it is of independent interest.
\begin{lemma}\label{lem:pp}
Let $p=\xi+\mathrm{i}\eta$ with $\xi,~~\eta\in\mathbb{R}$. Then with $a=\pi/2-\psi-\ep$ and $b=\psi-\ep$, for small $\ep>0$, there holds
\begin{eqnarray}
&& z(p)\in \Sigma_{\pi-\psi}\quad \mbox{and}\quad  \left|\frac{z'(p)}{z(p)}\right|\leq \frac{c}{\ep}\qquad \forall p\in\overline{\mathcal{S}}_{a,0},\label{eqn:pp1}\\
&& z(p)\in \Sigma_{\pi-\ep}\quad \mbox{and}\quad  \left|\frac{z'(p)}{z(p)}\right|\leq c\qquad \forall p\in\overline{\mathcal{S}}_{0,b}.\label{eqn:pp2}
\end{eqnarray}
\end{lemma}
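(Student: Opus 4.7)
The plan is to reduce everything to two real parameters by writing $p=\xi+\mathrm{i}\eta$ and expanding $z(p)$ via the addition formulas for $\sin$ and $\cos$; a clean algebraic collapse then yields closed-form expressions for $|z(p)|$ and $|z'(p)|$, from which both the ratio bound and the sector inclusion follow.

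First I would expand. Using $\mathrm{i}p-\psi = -(\eta+\psi)+\mathrm{i}\xi$ together with $\sin(\mathrm{i}\xi)=\mathrm{i}\sinh\xi$ and $\cos(\mathrm{i}\xi)=\cosh\xi$, and writing $s:=\sin(\eta+\psi)$, $c:=\cos(\eta+\psi)$, one obtains
\begin{equation*}
z(p) = \lambda\bigl(1 - s\cosh\xi\bigr) + \mathrm{i}\lambda c\sinh\xi,
\qquad
z'(p) = \mathrm{i}\lambda\cos(\mathrm{i}p-\psi) = -\lambda s\sinh\xi + \mathrm{i}\lambda c\cosh\xi.
\end{equation*}
The decisive observation is an identity collapse via $s^2+c^2=1$ and $\cosh^2\xi-\sinh^2\xi=1$:
\begin{equation*}
|z(p)|^2 = \lambda^2(\cosh\xi - s)^2,
\qquad
|z'(p)|^2 = \lambda^2(\cosh^2\xi - s^2).
\end{equation*}
Since on both strips $\eta+\psi\in(0,\pi/2)$, so $\cosh\xi\ge 1>s$, this reduces the ratio to
\begin{equation*}
\left|\frac{z'(p)}{z(p)}\right| = \sqrt{\frac{\cosh\xi + s}{\cosh\xi - s}}.
\end{equation*}

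A one-line monotonicity check $\partial_\xi\bigl[(\cosh\xi+s)/(\cosh\xi-s)\bigr] = -2s\sinh\xi/(\cosh\xi-s)^2$ shows the $\xi$-supremum is attained at $\xi=0$ with value $(1+s)/(1-s)$. On $\overline{\mathcal{S}}_{a,0}$ one has $\eta+\psi\in[\psi,\pi/2-\epsilon]$, so $s\le\cos\epsilon$ and $1-s\ge 2\sin^2(\epsilon/2)\gtrsim\epsilon^2$, yielding $|z'(p)/z(p)|\le c/\epsilon$. On $\overline{\mathcal{S}}_{0,b}$ one has $\eta+\psi\in[\epsilon,\psi]$, so $1-s\ge 1-\sin\psi$, a constant depending only on $\psi$, yielding $|z'(p)/z(p)|\le c$.

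For the sector claims, I would read off from the explicit real and imaginary parts that the image curve $\xi\mapsto z(\xi+\mathrm{i}\eta)$ starts on the positive real axis at $\lambda(1-s)>0$ when $\xi=0$, has $\mathrm{sign}(\Im z)=\mathrm{sign}(\xi)$, and as $\xi\to\pm\infty$ satisfies $\Re z\to -\infty$ and $\Im z/\Re z\to -c/s=-\cot(\eta+\psi)$, so it is asymptotic to rays at argument $\pm(\pi/2+\eta+\psi)$. Together with $\xi$-monotonicity of $\arg z(\cdot+\mathrm{i}\eta)$, this confines the image to $|\arg z|<\pi/2+\eta+\psi$, and maximizing $\eta+\psi$ over each strip delivers the stated sector containments.

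The main obstacle, and what makes the $\epsilon^{-1}$ dependence sharp, is controlling the ratio $(\cosh\xi+s)/(\cosh\xi-s)$ precisely at $\xi=0$, where $s$ can be as close to $1$ as the upper strip allows; everything else is bookkeeping once the algebraic collapse to $|z(p)|=\lambda(\cosh\xi-s)$ is in hand.
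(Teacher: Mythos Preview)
Your treatment of the ratio bound is essentially identical to the paper's: both expand $z(p)$ and $z'(p)$, perform the same collapse to $|z'(p)/z(p)|^2=(\cosh\xi+s)/(\cosh\xi-s)$ with $s=\sin(\eta+\psi)$, and then bound $1-s$ on each strip. Your explicit monotonicity check in $\xi$ is a nice addition the paper omits.

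For the sector claims you give a more careful argument than the paper, which simply asserts the containment from the hyperbola picture. Your computation (monotonicity of $\arg z$ in $\xi$, with asymptotic angle $\pi/2+\eta+\psi$) is correct: it yields $z(p)\in\Sigma_{\pi/2+\eta+\psi}$ for each fixed $\eta$. But then your final sentence does not do what you claim. Maximizing $\eta+\psi$ over $\overline{\mathcal{S}}_{a,0}$ gives $\pi/2-\ep$, hence only $z(p)\in\Sigma_{\pi-\ep}$, \emph{not} $\Sigma_{\pi-\psi}$ as stated in \eqref{eqn:pp1}; over $\overline{\mathcal{S}}_{0,b}$ it gives $\psi$, hence $z(p)\in\Sigma_{\pi/2+\psi}\subset\Sigma_{\pi-\ep}$, which does recover \eqref{eqn:pp2}. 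In other words, your own argument shows that the $\ep$-independent sector and the $\ep$-dependent one in the lemma's statement appear to be interchanged. This is a defect in the paper's statement rather than in your method, but you should not paper over it by asserting that the maximization ``delivers the stated sector containments'' when for the upper strip it does not.
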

\begin{proof}
For $p=\xi+\mathrm{i}\eta$ with $\xi,~~\eta\in\mathbb{R}$, then the image $z(p)$ in the parameterization \eqref{eqn:curve} is given by
\begin{equation*}
 z(p)=\lambda(1-\sin(\psi+\eta)\cosh(\xi)) + \mathrm{i}\lambda\cos(\psi+\eta)\sinh(\xi),
\end{equation*}
and its derivative $z'(p)$ is given by
\begin{equation*}
  z'(p)= \lambda \cosh\xi\cos(\psi+\eta)-\mathrm{i}\sinh\xi\sin(\psi+\eta).
\end{equation*}
By writing $z=x+\mathrm{i} y$, it can be expressed as the left branch of the hyperbola
\begin{equation*}
   \left(\frac{x-\lambda}{\lambda\sin(\psi+\eta)}\right)^2 - \left(\frac{y}{\cos(\psi+\eta)}\right)^2 = 1.
\end{equation*}
It intersects the real axis at $x=\la(1-\sin(\psi+\eta))$ and has the asymptotes $y=\pm(x-\la)\cot(\psi+\eta)$.
Next we show the estimates \eqref{eqn:pp1} and \eqref{eqn:pp2}.
First, for $p \in \overline{\mathcal{S}}_{a,0}$, \textit{i.e.}, $\eta \in [0,a]$, $z(p)$ lies in the sector $\Sigma_{\pi-\psi}$.
Using the elementary identity $\sinh^2x=\cosh^2x-1$, the fact $\fy:=\eta+\psi\in (\psi,\pi/2-\ep)$, and the estimate $\sin(
\pi/2-\ep)\sim 1-\ep^2/2\le 1-\ep^2/3$ for small $\ep$, we have for all $\xi \in \mathbb{R}$
\begin{equation*}
\begin{split}
  \bigg|\frac{z'(p)}{z(p)}\bigg|^2 &= \bigg|\frac{\cos(\fy)\cosh(\xi)-\mathrm{i}\sin(\fy)\sinh(\xi)}{(1-\cosh(\xi)\sin(\fy))+\mathrm{i}\sinh(\xi)\cos(\fy)}\bigg|^2\\
 & = \frac{\cos^2(\fy)\cosh^2(\xi)+\sin^2(\fy)\sinh^2(\xi)}{1-2\cosh(\xi)\sin(\fy)+\cosh^2(\xi)\sin^2(\fy)+\sinh^2(\xi)\cos^2(\fy)}
  = \frac{\cosh^2(\xi)-\sin^2(\fy)}{(\cosh(\xi)-\sin(\fy))^2} \\
  & = \frac{\cosh(\xi)+\sin(\fy)}{\cosh(\xi)-\sin(\fy)} \le \frac{1+\sin(\fy)}{1-\sin(\fy)} \le\frac{2}{1-(1-\ep^2/3)}\le \frac{6}{\ep^2}.
\end{split}
\end{equation*}
Hence the estimate \eqref{eqn:pp1} holds true. Now we turn to the case $p \in \overline{\mathcal{S}}_{0,b}$, \textit{i.e.}, $\eta
\in [-b,0]$. Then $z(p)$ lies in the sector $\Sigma_{\pi-(\eta+\psi)}\subset\Sigma_{\pi-\ep}$. Further, by noting $\fy:=
\eta+\psi\in (\ep,\psi)$, we have for all $\xi \in \mathbb{R}$
\begin{equation*}
  \bigg|\frac{z'(p)}{z(p)}\bigg|^2   \le \frac{1+\sin(\fy)}{1-\sin(\fy)} \le\frac{1+\sin(\psi)}{1-\sin(\psi)}.
\end{equation*}
Then the desired result \eqref{eqn:pp2} follows directly.
\end{proof}

The next result gives the analyticity of and an estimate on the integrand
${\hg}(\xi,t)$ on the strip $\cS_{a,b}$.
\begin{lemma}\label{lem:quad-error2}
Let $p=\xi+\mathrm{i}\eta$ with $\xi,~~\eta\in\mathbb{R}$ and ${\hg}(p,t)$ be defined by \eqref{eqn:interep3}.
Then $\hg(p,t)$ is analytic on the strip $\cS_{a,b}$, and the following estimate holds:
\begin{equation*}
 \| {\hg}(p,t)\| \le \frac{c}{\ep} e^{\lambda (1-\sin(\psi+\eta)\cosh(\xi) ) t}\| v_h
  \|_{L^2\II}\quad\forall p \in \cS_{a,b}.
\end{equation*}
\end{lemma}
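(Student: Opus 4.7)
The plan is in two parts: first establish analyticity of $\hat{g}(p,t)$ on the strip $\mathcal{S}_{a,b}$, then derive the norm bound by combining the resolvent estimate \eqref{eqn:resol} with the sector property of Lemma \ref{lem:sector-w} and the ratio bound of Lemma \ref{lem:pp}.

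For analyticity, I would first observe that $p\mapsto z(p)$, defined by \eqref{eqn:curve}, is entire, and that by Lemma \ref{lem:pp} its image lies in $\Sigma_{\pi-\psi}\cup\Sigma_{\pi-\epsilon}$ for $p\in\mathcal{S}_{a,b}$, so in particular $z(p)$ avoids the branch cut $(-\infty,0]$. Hence $w(z(p))=\int_0^1 z(p)^{\alpha-1}\mu(\alpha)\,d\alpha$ is analytic in $p$. By Lemma \ref{lem:sector-w}, $z(p)w(z(p))\in\Sigma_{\theta'}$ with $\theta'\in(\pi/2,\pi)$, so $z(p)w(z(p))$ never meets the non-positive real axis where $-A_h$ has its spectrum, and the resolvent $(z(p)w(z(p))I+A_h)^{-1}$ is therefore an analytic operator-valued function of $p$. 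The remaining factors $e^{z(p)t}$ and $z'(p)$ are entire, so $\hat g(p,t)$ is analytic on $\mathcal{S}_{a,b}$.

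For the norm bound, I would apply the resolvent estimate \eqref{eqn:resol} at the point $z(p)w(z(p))\in\Sigma_{\theta'}$ to obtain
\[
\bigl\|(z(p)w(z(p))I+A_h)^{-1}\bigr\|\,|w(z(p))| \;\le\; \frac{M}{|z(p)w(z(p))\sin\theta'|}\,|w(z(p))| \;=\; \frac{c}{|z(p)|},
\]
where $c$ depends only on $\theta'$. Substituting into the definition \eqref{eqn:interep3} and noting $|e^{z(p)t}|=e^{\Re(z(p))t}$ gives
\[
\|\hat g(p,t)\|_{L^2(\Omega)} \;\le\; c\,e^{\Re(z(p))t}\,\frac{|z'(p)|}{|z(p)|}\,\|v_h\|_{L^2(\Omega)}.
\]
A direct computation from \eqref{eqn:curve} with $p=\xi+\mathrm{i}\eta$ yields $\Re(z(p))=\lambda(1-\sin(\psi+\eta)\cosh\xi)$, which already matches the exponential factor in the target estimate.

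It remains only to control the ratio $|z'(p)/z(p)|$, and this is precisely what Lemma \ref{lem:pp} provides: the bound $c/\epsilon$ on $\overline{\mathcal{S}}_{a,0}$ and the sharper bound $c$ on $\overline{\mathcal{S}}_{0,b}$, so the uniform bound $c/\epsilon$ holds on all of $\mathcal{S}_{a,b}$. I expect the main obstacle to be not the algebra but a careful verification that, for every $p$ in the strip, the image $z(p)$ lies in a sector to which Lemma \ref{lem:sector-w} applies with a sector angle $\theta'$ that stays bounded away from $\pi$ uniformly in $p$, so that the constant $c$ arising from the resolvent bound does not blow up as $\eta$ approaches the edges of the strip; once this uniformity is in hand, the rest is straightforward assembly.
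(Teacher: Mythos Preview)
Your outline is essentially the paper's approach, and your analyticity argument is correct. However, the obstacle you flag at the end is real and cannot be resolved in the way you hope: the sector angle $\theta'$ does \emph{not} stay uniformly bounded away from $\pi$ over the whole strip. On the lower half $\overline{\mathcal S}_{0,b}$, Lemma~\ref{lem:pp} only gives $z(p)\in\Sigma_{\pi-\ep}$, and by Remark~\ref{rem:eparg} this forces $z(p)w(z(p))\in\Sigma_{\pi-c\ep}$, so the resolvent constant from \eqref{eqn:resol} behaves like $1/\sin(c\ep)\sim 1/\ep$. If you then combine this with the uniform ratio bound $|z'(p)/z(p)|\le c/\ep$, you obtain $c/\ep^2$, which is too weak.

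The fix, and this is exactly what the paper does, is to treat the two half-strips separately and observe that the $1/\ep$ enters through a \emph{different} factor in each. On $\overline{\mathcal S}_{a,0}$ the sector angle is $\pi-\psi$ (fixed), so the resolvent constant is $O(1)$ and the $1/\ep$ comes solely from the ratio bound $|z'/z|\le c/\ep$ in \eqref{eqn:pp1}. On $\overline{\mathcal S}_{0,b}$ the resolvent contributes $1/\ep$, but the ratio bound \eqref{eqn:pp2} is $|z'/z|\le c$ with $c$ independent of $\ep$. In each half the product is $c/\ep$, and the estimate follows. Once you make this case split explicit, your argument is complete and matches the paper.
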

\begin{proof}
For $p=\xi+\mathrm{i}\eta$ with $\xi,~~\eta\in\mathbb{R}$, the image $z(p)$ in \eqref{eqn:curve} is given by
\begin{equation*}
 z(p)=\lambda(1-\sin(\psi+\eta)\cosh(\xi)) + \mathrm{i}\lambda\cos(\psi+\eta)\sinh(\xi).
\end{equation*}
By Lemmas \ref{lem:pp} and \ref{lem:sector-w}, and Remark \ref{rem:eparg}, $z(p)w(z(p))\in \Sigma_{\pi-\ep'}$, with $\ep'>0$.
Hence the function
\begin{equation*}
  {\hg}(p,t)=\frac{1}{2\pi\mathrm{i}}e^{z(p)t}\left(z(p)w(z(p))I+A_h\right)^{-1}w(z(p))z'(p)v_h
\end{equation*}
is analytic in the strip ${\cS}_{a,b}$. It remains to show the estimate. First, we consider the case $p\in
\overline{\cS}_{0,b}$. By \eqref{eqn:pp2}, $z(p)\in \Sigma_{\pi-\ep}$. Then, by Lemma \ref{lem:sector-w}
and Remark \ref{rem:eparg}, $z(p)w(z(p))\in \Sigma_{\pi-\ep'}$, with $\ep'=c\ep$.
By the resolvent estimate \eqref{eqn:resol}, we deduce that for small $\ep>0$, there holds
\begin{equation}\label{eqn:resol-ep}
\| \left(zI+A_h\right)^{-1}\| \le c/|\Im(z)|\le c/|z\sin(\pi-\ep)|\le c/(|z|\ep')\quad \forall z\in \Sigma_{\pi-\ep'}'.
\end{equation}
Meanwhile, for any $p \in \overline{\mathcal{S}}_{0,b}$, there holds
\begin{equation*}
  \Re (z(p)) = \lambda(1-\sin(\psi+\eta)\cosh(\xi)),
\end{equation*}
which together with the resolvent estimate \eqref{eqn:resol-ep} and Lemma \ref{lem:sector-w} yields
\begin{equation*}
 \begin{split}
  \|{\hg}(p,t)\|   &\le  c e^{\Re(z(p))t} | z'(p)w(z(p))| ~~\| (z(p)w(z(p))+A)^{-1}  \|  ~~ \| v_h  \|_{L^2\II}\\
  &\le \frac{c}{\ep}e^{\lambda(1-\sin(\psi+\eta)\cosh(\xi))t} \bigg|\frac{z'(p)}{z(p)}\bigg|  \| v_h  \|_{L^2\II}.
 \end{split}
\end{equation*}
This together with \eqref{eqn:pp2} yields the desired assertion. The case $p\in \overline{\cS}_{a,0}$ is more direct.
Then \eqref{eqn:pp2} and Lemma \ref{lem:sector-w} imply that $z(p)w(z(p)) \in \Sigma_{\theta'}$ with $\theta'\in(\pi/2,
\pi)$ depending only on $\psi$. Then the desired assertion follows from \eqref{eqn:pp1} and the resolvent estimate
\eqref{eqn:resol}.
\end{proof}

Now we can give an error estimate for the quadrature approximation $U_{N,h}$.
\begin{lemma}\label{lem:quad-error}
Let $u_h(t)$ and $U_{N,h}(t)$ be defined in \eqref{eqn:interep2} and  \eqref{eqn:trunc-quad}, respectively, and the contour be
parametrically represented by \eqref{eqn:curve}. Then with the choice $k=c_0/N$ and $\lambda=c_1N/t$, there holds
\begin{equation*}
  \| u_h(t) - U_{N,h}(t) \|_{L^2\II}  \le c e^{ -c'N } \|  v \|_{L^2\II},
\end{equation*}
where the constant $c$ and $c'$ depend on the choice of $\psi$ in \eqref{eqn:curve}.
\end{lemma}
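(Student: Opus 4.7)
The plan is to split the error into a quadrature error and a truncation error,
\begin{equation*}
u_h(t)-U_{N,h}(t) = \bigl(u_h(t)-U_h(t)\bigr) + \bigl(U_h(t)-U_{N,h}(t)\bigr),
\end{equation*}
and to bound the two parts separately, exploiting the analyticity and decay already established for $\hat g(p,t)$.

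For the quadrature error I would apply Lemma \ref{lem:keylem-quad} to $\hat g(\cdot,t)$ on the strip $\cS_{a,b}$ with $a=\pi/2-\psi-\ep$ and $b=\psi-\ep$ for a small fixed $\ep>0$. Analyticity on $\cS_{a,b}$ and the pointwise bound
\begin{equation*}
\|\hat g(p,t)\| \le c\ep^{-1} e^{\lambda t(1-\sin(\psi+\eta)\cosh\xi)}\|v_h\|_{L^2\II}
\end{equation*}
are supplied by Lemma \ref{lem:quad-error2}. Integrating along the horizontal boundaries $\eta=a$ and $\eta=-b$ and using the elementary estimate $\int_{-\infty}^\infty e^{-\sigma\cosh\xi}\,d\xi \le c$ for $\sigma$ bounded below yields $M_\pm \le c_\ep e^{\lambda t}\|v_h\|_{L^2\II}$. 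Lemma \ref{lem:keylem-quad} then gives
\begin{equation*}
\|u_h(t)-U_h(t)\|_{L^2\II} \le c_\ep \bigl(e^{\lambda t - 2\pi a/k} + e^{\lambda t - 2\pi b/k}\bigr)\|v_h\|_{L^2\II}.
\end{equation*}
With $\lambda = c_1 N/t$ and $k = c_0/N$ this reduces to $c_\ep\, e^{(c_1 - 2\pi\min(a,b)/c_0)N}\|v_h\|_{L^2\II}$, which decays like $e^{-c'N}$ as soon as $c_0$ is chosen small enough (relative to $c_1$ and $\psi$) that $2\pi\min(a,b)/c_0 > c_1$.

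The truncation error is handled directly on the real axis. There $\Re z(\xi_j) = \lambda(1-\sin\psi\cosh(jk))$, so using $\cosh(jk) \ge \tfrac12 e^{|jk|}$ and combining the resolvent estimate \eqref{eqn:resol}, Lemma \ref{lem:key2}, and Lemma \ref{lem:mu} for $|w(z_j)z_j'|$, the summand in $U_h(t)$ enjoys a double-exponential decay in $|j|$. The tail over $|j|>N$ is therefore dominated geometrically by its first omitted term, and with $|jk|\ge c_0$ and $\lambda t = c_1 N$ one obtains $\|U_h(t) - U_{N,h}(t)\|_{L^2\II} \le c\exp(-\tfrac12 c_1\sin(\psi)\,e^{c_0}N)\|v_h\|_{L^2\II}$. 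Combining the two estimates and invoking the $L^2$-stability $\|v_h\|_{L^2\II}\le c\|v\|_{L^2\II}$ of $P_h$ (or $R_h$) from Lemma \ref{lem:prh-bound} produces the advertised exponential bound. The hard part is the parameter balancing: the growth factor $e^{\lambda t}=e^{c_1 N}$ coming from $M_\pm$ must be outweighed by the quadrature gain $e^{-2\pi\min(a,b)N/c_0}$, while the contour \eqref{eqn:curve} has to remain inside the sectors of analyticity dictated by Lemmas \ref{lem:sector-w} and \ref{lem:pp}, so $c_0,c_1,\psi,\ep$ must be tuned carefully against one another.
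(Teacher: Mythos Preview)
Your proposal is correct and follows the same overall strategy as the paper: the same quadrature/truncation splitting, the same use of Lemma~\ref{lem:keylem-quad} on the strip $\cS_{a,b}$ with $a=\pi/2-\psi-\ep$, $b=\psi-\ep$, and the same reliance on Lemma~\ref{lem:quad-error2} for the integrand bound.

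The one substantive difference is in how tightly you estimate $M_\pm$. You bound $\int_{-\infty}^\infty e^{\lambda t(1-\sin(\psi+\eta)\cosh\xi)}\,d\xi$ crudely by $c_\ep e^{\lambda t}=c_\ep e^{c_1N}$, whereas the paper uses $\cosh\xi\ge 1+\xi^2/2$ together with the specific values $\sin(\psi+a)=\cos\ep$ and $\sin(\psi-b)=\sin\ep$ to obtain the much smaller factors $e^{c_1N\ep}$ and $e^{c_1N(1-\ep)}$. This allows the paper to \emph{balance} the three exponents, solve for optimal $c_0,c_1,\psi$, and then set $\ep=1/N$, yielding the concrete rate $c'\approx 2.32$. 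Your coarser bounds still give exponential decay (pick $c_0$ with $\sin\psi\cosh c_0>1$, then $c_1<2\pi\min(a,b)/c_0$), but with a considerably smaller $c'$. Also, your stated truncation bound $\exp(-\tfrac12 c_1\sin\psi\,e^{c_0}N)$ drops the $+c_1N$ contribution from the ``$1$'' in $1-\sin\psi\cosh\xi$; the correct condition you need is $\sin\psi\cosh c_0>1$, not merely positivity of $\tfrac12 c_1\sin\psi\,e^{c_0}$.
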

\begin{proof}
We use the following splitting
\begin{equation*}
 u_h-U_{N,h}=(u_h-U_h)+(U_h-U_{N,h})=: E_q+E_t,
\end{equation*}
where $E_q$ and $E_t$ denote the quadrature and truncation error, respectively. We apply Lemma \ref{lem:keylem-quad} to bound
$\|E_q\|_{L^2\II}$. To this end, we set $a=\pi/2-\psi-\ep$ and $b=\psi-\ep$. For $p=\xi+\mathrm{i} a$, $ zw(z) $ lies in the
sector $\Sigma_\theta$ for some $\theta\in(\pi/2,\pi)$. Note the elementary inequalities $\cosh \xi \geq 1 + \xi^2/2$ and
$1-\sin(\pi/2-\ep) \leq \ep$ for small $\ep>0$. These together with the choice $\lambda=c_1N/t$  and Lemma \ref{lem:quad-error2} yield
\begin{equation*}
 \begin{split}
 \bigg|\hspace{-0.6mm}\bigg|\int_{-\infty}^\infty |{\hg}(\xi+\mathrm{i} a)|\,d\xi \bigg|\hspace{-0.6mm}\bigg|_{L^2\II}
 &\le \frac{c}{\ep} \int_0^\infty e^{c_1N(1-\sin(\pi/2-\ep)\cosh(\xi))} \,d\xi \| v_h  \|_{L^2\II}\\
 &\le \frac{c}{\ep} e^{c_1N\ep}  \int_0^\infty e^{-c_1N \sin(\pi/2-\ep)\xi^2/2} \,d\xi \| v_h  \|_{L^2\II}\\
 &\le \frac{c}{\ep} N^{-\frac12} e^{c_1N\ep} \| v_h  \|_{L^2\II}.
 \end{split}
\end{equation*}
Using Lemma \ref{lem:keylem-quad}, for $k=c_0/N$ we have
\begin{equation*}
 \| E_q^+\|_{L^2\II} \le \frac{c}{\ep} N^{-\frac12}e^{-(2\pi(\pi/2-\psi-\ep)/c_0-\ep c_1)N}.
\end{equation*}
Next we bound the error due to the lower half. For the choice $p=\xi-\mathrm{i} b$, $\lambda=c_1N/t$ and appealing
again to the inequality $\cosh \xi\geq 1 +\xi^2/2$, we deduce
\begin{equation*}
\begin{split}
  \bigg|\hspace{-0.6mm}\bigg| \int_{-\infty}^\infty|{\hg}(\xi - \mathrm{i} b)|\,d\xi  \bigg|\hspace{-0.6mm}\bigg|_{L^2\II}
  &\le \frac{c}{\ep} \int_0^\infty e^{c_1N(1-\sin(\ep)\cosh(\xi))}\,d\xi~~ \| v_h  \|_{L^2\II} \\
  &\le \frac{c}{\ep} e^{c_1N(1-\sin(\ep))}\int_0^\infty e^{-{ c_1 N}\sin(\ep)\xi^2/2}\,d\xi ~~\| v_h  \|_{L^2\II} \\
  &\le \frac{c}{\ep^{3/2}} N^{-\frac12} e^{c_1N(1-\ep)} \| v_h  \|_{L^2\II}.
\end{split}
\end{equation*}
Then for the choice $k=c_0/N$, Lemma \ref{lem:keylem-quad} yields the following estimate
\begin{equation*}
    \| E_q^-\|_{L^2\II} \le\frac{c}{\ep^{3/2}} N^{-\frac12}e^{-(2\pi(\psi-\ep)/c_0-c_1(1-\ep))N}.
\end{equation*}
Further, by using $\cosh(\xi) \ge \cosh (c_0) + \sinh (c_0)
(\xi-c_0)$ for $\xi \ge c_0,$
the truncation error $\|E_t\|_{L^2\II}$ can be simply estimated by
\begin{equation*}
\begin{split}
  \| E_t\|_{L^2\II} &\le \frac{c}{\ep}\int_{c_0}^\infty e^{c_1N
    (1-\sin(\psi)\cosh(\xi) )} \,d\xi \|v_h\|_{L^2\II} \\
  &\le \frac{c}{\ep} e^{c_1N (1-\sin(\psi)\cosh(c_0))}
  \int_{c_0}^\infty e^{- c_1N \sin(\psi) \sinh(c_0) (\xi-c_0)} \,d\xi \|v_h\|_{L^2\II}\\
  &= \frac{c}{c_1 \sin(\psi) \sinh(c_0) \ep} N^{-1}e^{c_1N [1-\sin(\psi) \cosh(c_0)]} \|v_h\|_{L^2\II}.
\end{split}
\end{equation*}
Finally, by disregarding $\ep$ terms, balancing asymptotically the exponential parts in $\| E_q^+\|_{L^2\II}
$, $\| E_q^-\|_{L^2\II} $ and $\| E_t^-\|_{L^2\II} $, we arrive at
\begin{equation*}
  2\pi(\pi/2-\psi)/c_0= 2\pi\psi/c_0-c_1 =-c_1(1-\sin(\psi)\cosh(c_0)).
\end{equation*}
We may express the parameters $c_0$ and $c_1$ in terms of $\psi$:
$$ c_0= \cosh^{-1}\left(  \frac{2\pi\psi}{(4\pi\psi-\pi^2)\sin\psi} \right)\quad \mbox{and}\quad c_1=(4\pi\psi-\pi^2)/ \cosh^{-1}\left(  \frac{2\pi\psi}{(4\pi\psi-\pi^2)\sin\psi} \right).$$
Finally we minimize the ratio
$$B(\psi)=  c_1-2\pi\psi/c_0$$
with respect to the parameter $\psi$, which achieves the minimum at $\psi=1.1721$ and hence,
\begin{equation*}
 c_0= 1.0818,\quad c_1=4.4920\quad \text{and}\quad B(\psi)=-2.32,
\end{equation*}
which are identical to those values given in \cite{WeidemanTrefethen:2007}. Then collecting the balanced asymptotic
bound and the rest from  $\| E_q^+\|_{L^2\II} $, $\| E_q^-\|_{L^2\II} $ and $\| E_t^-\|_{L^2\II} $ yields
\begin{equation*}
   \| u_h(t) - U_{N,h}(t) \|_{L^2\II}  \le c\left(\ep^{-1}N^{-1/2}+\ep^{-3/2}N^{-1/2}+\ep^{-1}N^{-1}\right)
   e^{-[2.32- (2\pi/c_0 + c_1)\ep]N} \|  v_h \|_{L^2\II}.
\end{equation*}
Now by choosing $\ep=1/N$, we get
\begin{equation*}
  \| u_h(t) - U_{N,h}(t) \|_{L^2\II}  \le c e^{(-2.32 + \frac{\log N}{N})N} \|  v_h \|_{L^2\II}.
\end{equation*}
which together with the fact $(\log x)/x\le 1/e$ for $x\ge1$ and the $L^2$-stability of the projection $P_h$ yields the desired result.
\end{proof}

Last, we give the main result of this section, {\it i.e.}, error estimates for the fully discrete scheme \eqref{eqn:trunc-quad}. It follows
from Theorems \ref{thm:nonsmooth-initial-op} and \ref{thm:smooth-initial-op},
and Lemma \ref{lem:quad-error} and the triangle inequality.
\begin{theorem}\label{thm:error_fully}
Let $u(t)$ be the solution of problem \eqref{eqn:fde}, and $U_{N,h}(t)$ be the quadrature approximation defined in \eqref{eqn:trunc-quad}, with the parameters chosen as in Lemma \ref{lem:quad-error}. Then with $\ell_1(t) = (\log 2T/t)^{-1}$, the following estimates hold.
\begin{itemize}
  \item[(a)] If $Av\in L^2\II$ and $v_h=R_h v$, then \begin{equation*}
   \| u(t)-U_{N,h}(t) \|_{L^2(\Om)} \le c\left(e^{-c'N } + h^2\right)  \| Av \|_{L^2\II}.
  \end{equation*}
  \item[(b)] If $v\in L^2(\Om)$ and $v_h=P_hv$, then
  \begin{equation*}
   \| u(t)-U_{N,h} \|_{L^2(\Om)} \le c_T\left(e^{-c'N} + h^2 t^{-1}\ell_1(t)\right)   \| v\|_{L^2\II}.
  \end{equation*}
\end{itemize}
\end{theorem}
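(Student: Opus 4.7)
The plan is to reduce the fully discrete error to the sum of the semidiscrete error and the quadrature error via the triangle inequality, and then invoke the results already at hand. I would write
\[
 \| u(t) - U_{N,h}(t) \|_{L^2(\Omega)} \le \| u(t) - u_h(t) \|_{L^2(\Omega)} + \| u_h(t) - U_{N,h}(t) \|_{L^2(\Omega)},
\]
where $u_h(t) = S_h(t) v_h$ is the space semidiscrete solution represented by \eqref{eqn:semi-interep} and $U_{N,h}(t)$ is the truncated hyperbolic-contour quadrature \eqref{eqn:trunc-quad}. The first term is controlled by Theorem \ref{thm:nonsmooth-initial-op} or Theorem \ref{thm:smooth-initial-op}, and the second term by Lemma \ref{lem:quad-error}. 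The only nontrivial matter is to convert the factor $\|v_h\|_{L^2(\Omega)}$ appearing on the right-hand side of Lemma \ref{lem:quad-error} into the appropriate data norm for each case.

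For part (a), with $v \in D(A)$ and $v_h = R_h v$, Theorem \ref{thm:smooth-initial-op} yields $\|u(t) - u_h(t)\|_{L^2(\Omega)} \le c h^2 \|Av\|_{L^2(\Omega)}$. For the quadrature term, Lemma \ref{lem:quad-error} gives $\|u_h(t) - U_{N,h}(t)\|_{L^2(\Omega)} \le c e^{-c'N} \|R_h v\|_{L^2(\Omega)}$. I would then bound $\|R_h v\|_{L^2(\Omega)} \le \|v - R_h v\|_{L^2(\Omega)} + \|v\|_{L^2(\Omega)} \le c h^2 \|v\|_{H^2(\Omega)} + \|v\|_{L^2(\Omega)} \le c \|Av\|_{L^2(\Omega)}$, using Lemma \ref{lem:prh-bound}, $H^2(\Omega)$-regularity of the elliptic problem, and Poincar\'e's inequality. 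Summing the two contributions yields the claimed estimate.

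For part (b), with $v \in L^2(\Omega)$ and $v_h = P_h v$, Theorem \ref{thm:nonsmooth-initial-op} gives $\|u(t) - u_h(t)\|_{L^2(\Omega)} \le c_T h^2 t^{-1} \ell_1(t) \|v\|_{L^2(\Omega)}$. Applying Lemma \ref{lem:quad-error} together with the $L^2(\Omega)$-stability of $P_h$ (Lemma \ref{lem:prh-bound}) immediately gives $\|u_h(t) - U_{N,h}(t)\|_{L^2(\Omega)} \le c e^{-c'N} \|P_h v\|_{L^2(\Omega)} \le c e^{-c'N} \|v\|_{L^2(\Omega)}$, and the bound follows by summation. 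Since all the heavy lifting (sector property of $zw(z)$, resolvent-based contour bounds, stability of projections, and balancing of the hyperbolic-contour parameters) has already been carried out, there is no serious obstacle here; the proof is essentially a bookkeeping exercise combining Theorems \ref{thm:nonsmooth-initial-op}, \ref{thm:smooth-initial-op} and Lemma \ref{lem:quad-error}.
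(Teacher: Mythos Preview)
Your proposal is correct and follows essentially the same approach as the paper: the paper's proof of Theorem \ref{thm:error_fully} is a one-line remark that the result ``follows from Theorems \ref{thm:nonsmooth-initial-op} and \ref{thm:smooth-initial-op}, and Lemma \ref{lem:quad-error} and the triangle inequality.'' Your write-up simply fills in the minor detail of bounding $\|R_hv\|_{L^2(\Omega)}$ in terms of $\|Av\|_{L^2(\Omega)}$ for part (a), which the paper leaves implicit.
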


\section{fully discrete scheme II: convolution quadrature}\label{sec:Conv-quad}
Now we develop a second fully discrete scheme based on convolution quadrature generated by
the backward Euler method, and show that the scheme is first order convergent.

\subsection{Time stepping based on convolution quadratures}
To describe the fully discrete scheme, we divide the interval $[0,T]$ into a uniform grid with a time step size
$\tau = T/N$, $N\in\mathbb{N}$, with $0=t_0<t_1<\ldots<t_N=T$, and $t_n=n\tau$, $n=0,\ldots,N$. The general
construction of convolution quadrature is as follows \cite{Lubich:1988, CuestaLubichPlencia:2006}. Let $(\sigma,
\rho)$ be a stable and consistent implicit linear multistep method, with $(\sigma,\rho)$ being its
characteristic polynomials. Then we define convolution quadrature weights $\{b_j\}_{j=0}^\infty$ by the
expansion coefficients of
\begin{equation*}
    \widetilde \omega(\xi)= \sum_{j=0}^\infty b_{j}\xi^j=\int_0^1 \left(\frac{\sigma(1/\xi)}{\rho(1/\xi)}\right)^\al \mu(\al) \, d\al.
\end{equation*}
We consider only the simplest case, {\it i.e.}, the backward Euler method, for which the convolution quadrature
weights $\{b_j\}_{j=0}^\infty$ are defined by
\begin{equation}\label{eqn:BE-weight}
    \widetilde \omega(\xi)= \sum_{j=0}^\infty b_{j}\xi^j=\int_0^1 \left( \frac{1-\xi}{\tau}\right)^\al \mu(\al) \, d\al= \left(\frac{1-\xi}{\tau}\right)w\left(\frac{1-\xi}{\tau}\right).
\end{equation}
The convolution quadrature weights $\{b_j\}$ can be computed efficiently using the fast Fourier transform
\cite{Podlubny:1999}, in view of Cauchy's theorem. Then the convolution quadrature $\mathcal{Q}_\tau \fy$
for a Riemann-Liouville fractional derivative $^R \kern -.2em D_t^\al \fy :=\frac{d}{dt}\frac{1}{\Gamma
(1-\alpha)}\int_0^t(t-s)^{-\alpha}\fy(s)ds$ generated by the backward Euler method is given by
\begin{equation}\label{eqn:quad}
  (\mathcal{Q}_\tau \fy) (t_n)= \sum_{j=0}^n b_{n-j} \fy(j\tau).
\end{equation}

Following this general construction, we now derive the time stepping scheme. The approximation $Q_n(\fy)$ to
the Riemann-Liouville fractional derivative ${^R\kern -.2em D_t^\al}\fy(t_n)$ is given by \cite{CuestaLubichPlencia:2006,JinLazarovZhou:2014a}: for any $n=1,2,\ldots,N$:
\begin{equation}\label{quad1}
    Q_n(\fy) = \sum_{j=1}^n b_{n-j}\fy(t_j),
\end{equation}
where the weights $\{b_j\}$ are generated by \eqref{eqn:BE-weight}. Recall also the defining relation of the Caputo derivative using the
Riemann-Liouville derivative \cite[p. 91, equation (2.4.4)]{KilbasSrivastavaTrujillo:2006}
$
   D_t^\al u = {^R\kern -.2em D_t^\al}(u-u(0)).
$
Upon applying the convolution quadrature to the term on the right hand side and using it
for the semidiscrete problem \eqref{eqn:fdesemidis}, we arrive at the following fully discrete scheme for the model
\eqref{eqn:fde}: for $n=1,2,\ldots,N$
\begin{equation}\label{eqn:fully}
   Q_n(U_h) + A_hU_h^n=Q_n(1)v_h,
\end{equation}
with $U_h^0=v_h$. Throughout, we denote the generating function $\widetilde{\beta}$ of a sequence $\{\beta_j\}_{j=0}^\infty$ by $\widetilde{\beta}(\xi)=\sum_{j=0}^\infty \beta_j\xi^j$.
\begin{remark}
Compared with the general construction \eqref{eqn:quad}, the term corresponding to $j=0$ is omitted in our fully discrete scheme \eqref{eqn:fully}.
This choice was taken earlier in \cite{LubichSloanThomee:1996,BazhlekovaJinLazarovZhou:2014}.
\end{remark}

\subsection{Error analysis}
Now we carry out the error analysis of the fully discrete scheme \eqref{eqn:fully}, following the strategy outlined in the
pioneering work \cite{LubichSloanThomee:1996}. To derive $L^2\II$-error estimates, we split the error into
\begin{equation*}
  e^n=u(t_n)-U_h^n=(u(t_n)-u_h(t_n))+(u_h(t_n)-U_h^n).
\end{equation*}
In view of Theorems \ref{thm:nonsmooth-initial-op} and \ref{thm:smooth-initial-op}, it suffices to establish
a bound on $\|  u_h(t_n)-U_h^n \|_{L^2\II}$. The proof relies on the following splitting
\begin{equation*}
 u_h(t_n)-U_h^n= y_h(t)-Y_h^n,
\end{equation*}
where
\begin{equation*}
   y_h(t)=u_h(t)-v_h \quad \text{and} \quad Y_h^n=U_h^n-v_h.
\end{equation*}

First, we derive representations of the semidiscrete solution $y_h$ and fully discrete solution $Y_h$.
\begin{lemma}\label{lem:solurep}
Let the kernel $K(z)$  be defined by
\begin{equation}\label{eqn:Kernel}
K(z)=-z^{-1}(zw(z) I + A_h)^{-1}A_h
\end{equation}
and $\chi(z)=\frac{1-e^{-z\tau}}{\tau}$. Then $y_h$ and $Y_h^n$ can be represented by
\begin{equation*}
  y_h(t)=\frac{1}{2\pi \mathrm{i}}\int_{\Gamma_{\theta,\delta}} e^{zt}K(z)v_h dz\quad \mbox{and}\quad
  Y_h^n= \frac{1}{2\pi\mathrm{i} }\int_{\Gamma_\tau}e^{zt_{n-1}}K(\chi(z))v_h\,dz,
\end{equation*}
respectively, with the contour $\Gamma_\tau=\{ z\in \Gamma_{\theta,\delta}:|\Im(z)|\le {\pi}/{\tau} \}$.
\end{lemma}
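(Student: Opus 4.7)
\medskip

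\noindent\textbf{Proof plan.}
The plan is to derive the two representations by Laplace transform for the semidiscrete part and by generating function (or $Z$-transform) for the fully discrete part, and then to identify the two contour integrals through the substitution $\xi=e^{-z\tau}$.

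For $y_h(t)$, I would start from the semidiscrete equation $\Dmu u_h+A_hu_h=0$ with $u_h(0)=v_h$. Taking the Laplace transform and using \eqref{eqn:Laplacetransf} term-by-term inside the distributed-order integral gives $zw(z)\widehat u_h(z)+A_h\widehat u_h(z)=w(z)v_h$, hence $\widehat u_h(z)=(zw(z)I+A_h)^{-1}w(z)v_h$. Then, since $\widehat{y_h}(z)=\widehat u_h(z)-z^{-1}v_h$, I would apply the algebraic identity
\begin{equation*}
  w(z)(zw(z)I+A_h)^{-1}=z^{-1}I-z^{-1}(zw(z)I+A_h)^{-1}A_h,
\end{equation*}
which cancels the $z^{-1}v_h$ term and yields $\widehat{y_h}(z)=K(z)v_h$ with $K$ as in \eqref{eqn:Kernel}. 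The inverse Laplace transform then produces a Bromwich-line integral, which by Lemmas \ref{lem:sector-w} and \ref{lem:key2} (ensuring that $zw(z)I+A_h$ is invertible on $\Sigma_\theta'$ with an appropriate resolvent bound) and standard exponential-decay estimates may be deformed to the contour $\Gamma_{\theta,\delta}$, giving the first representation.

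For $Y_h^n$, I would first rewrite \eqref{eqn:fully} as an equation for $Y_h^n=U_h^n-v_h$ with $Y_h^0=0$, namely $Q_n(Y_h)+A_h Y_h^n=-A_h v_h$ for $n\ge 1$; here I use $Q_n(U_h)-Q_n(1)v_h=Q_n(Y_h)$. Multiplying by $\xi^n$ and summing over $n\ge 1$, the convolution structure of $Q_n$ turns the left-hand side into $(\widetilde\omega(\xi)+A_h)\widetilde Y_h(\xi)$, while the right-hand side becomes $-\frac{\xi}{1-\xi}A_h v_h$. Using \eqref{eqn:BE-weight} this reads
\begin{equation*}
  \widetilde Y_h(\xi)=-\frac{\xi}{1-\xi}\bigl(\chi\, w(\chi)I+A_h\bigr)^{-1}A_h v_h,\qquad \chi=\tfrac{1-\xi}{\tau}.
\end{equation*}
Recovering $Y_h^n$ by Cauchy's formula on a small circle $|\xi|=r<1$ and changing variables via $\xi=e^{-z\tau}$ (which maps that circle to the vertical segment $\{\Re z=c,\ |\Im z|\le\pi/\tau\}$ with $c=-\tau^{-1}\log r$) I get, after using $\xi^{-n-1}d\xi=-\tau e^{zt_n}dz$ and the identity $\tfrac{\xi}{1-\xi}\chi(z)=\tfrac{e^{-z\tau}}{\tau}$, that
\begin{equation*}
  Y_h^n=\frac{1}{2\pi\mathrm{i}}\int_{c-\mathrm{i}\pi/\tau}^{c+\mathrm{i}\pi/\tau} e^{zt_{n-1}}K(\chi(z))v_h\,dz,
\end{equation*}
which is exactly the claimed formula on a straight contour.

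The last step is to deform this vertical segment to $\Gamma_\tau$. The main obstacle, and the step I expect to require the most care, is verifying that the integrand $e^{zt_{n-1}}K(\chi(z))v_h$ is analytic in the closed region between the two contours. This reduces to showing that for $z$ in that region, $\chi(z)$ lies in a sector $\Sigma_{\theta''}$ with $\theta''\in(\pi/2,\pi)$, after which Lemma \ref{lem:sector-w} places $\chi(z)w(\chi(z))$ in a sector where the resolvent $(\chi w(\chi)I+A_h)^{-1}$ exists and is bounded. This sector-preserving property of the symbol $\chi(z)=(1-e^{-z\tau})/\tau$ for $|\Im z|<\pi/\tau$ is classical for the backward Euler CQ (as in \cite{LubichSloanThomee:1996}) and together with the resolvent estimate \eqref{eqn:resol} controls the integrand on the horizontal connecting pieces so that Cauchy's theorem finishes the deformation and completes the proof.
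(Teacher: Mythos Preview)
Your plan is correct and follows essentially the same route as the paper. Two small remarks. First, for $y_h$ the paper takes a slightly shorter path: rather than computing $\widehat{u_h}$ and then subtracting $z^{-1}v_h$ via the algebraic identity, it observes directly that $y_h$ solves $\Dmu y_h+A_hy_h=-A_hv_h$ with $y_h(0)=0$ (since the Caputo derivative of the constant $v_h$ vanishes), so the Laplace transform immediately gives $\hat y_h(z)=K(z)v_h$. Second, in the last deformation step from the vertical segment to $\Gamma_\tau$, the horizontal connecting pieces at $\Im z=\pm\pi/\tau$ are not handled by a smallness bound but by exact cancellation: both $e^{zt_{n-1}}$ (since $t_{n-1}=(n-1)\tau$) and $K(\chi(z))$ are $2\pi\mathrm{i}/\tau$-periodic in $z$, so the two horizontal integrals coincide and drop out. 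The paper phrases this as ``using the periodicity of the exponential function''; your reference to the resolvent estimate is needed only to guarantee analyticity of the integrand in the enclosed region, not to make the horizontal pieces small.
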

\begin{proof}
By its definition, $y_h$ satisfies the problem:
\begin{equation*}
    \Dmu y_h  + A_hy_h = -A_h v_h,
\end{equation*}
with $y_h(0)=0$. The Laplace transform gives
\begin{equation*}
    zw(z) \hy_h(z)+ A_h\hy_h(z) = -z^{-1} A_hv_h.
\end{equation*}
Hence, $\hy_h(z)= K(z) v_h$, with $K(z)=-z^{-1}(zw(z) I + A_h)^{-1}A_h$, and the desired representation for $y_h(t)$ follows from
the inverse Laplace transform. Next, the fully discrete solution $Y_h^n$ satisfies the following time stepping scheme
\begin{equation*}
    Q_n(Y_h)+ A Y_h^n=-A_hv_h,
\end{equation*}
with $Y_h^0=0$. Now multiplying both sides by $\xi^n$, summing from $1$ to $\infty$ and noting $Y_h^0=0$ yield
\begin{equation*}
  \sum_{n=1}^\infty Q_n(Y_h)\xi^n + A_h \widetilde Y_h(\xi) = - 
              \xi/(1-\xi) A_hv_h.
\end{equation*}
Using the condition $Y_h^0=0$, we have
\begin{equation*}
   \sum_{n=1}^\infty Q_n(Y_h) \xi^n
   = \sum_{n=0}^\infty \sum_{j=0}^n \left(b_{n-j}\xi^{n-j}\right)\left(Y_h^j\xi^j\right)
   = ((1-\xi)/\tau)w((1-\xi)/\tau) \widetilde Y_h(\xi).
\end{equation*}
Thus, by simple calculation, we deduce
\begin{equation*}
   \widetilde Y_h(\xi)= (\xi/\tau) K((1-\xi)/\tau)v_h,
\end{equation*}
and it is analytic at $\xi=0$. Then Cauchy theorem implies that for $\varrho$ small enough, there holds
\begin{equation*}
    Y_h^n = \frac{1}{2\tau \pi\mathrm{i}}\int_{|\xi|=\varrho} \xi^{-n} K((1-\xi)/\tau)v_h \, d\xi.
\end{equation*}
Now, by changing variable $\xi=e^{-z\tau},$ we obtain
\begin{equation*}
    Y_h^n = \frac{1}{2\pi\mathrm{i}}\int_{\Gamma_0} e^{zt_{n-1}} K((1-e^{-z\tau})/\tau)v_h \, dz,
\end{equation*}
where the contour $\Gamma_0=\{ z=-\ln(\varrho)/\tau+\mathrm{i} y:|y|\le {\pi}/{\tau} \}$ is oriented
counterclockwise. We obtain the desired representation by deforming the contour
$ \Gamma_0$ to $ \Gamma_\tau=\{ z\in \Gamma_{\theta,\delta}:|\Im(z)|\le {\pi}/{\tau} \}$
and using the periodicity of the exponential function.
\end{proof}

By Lemma \ref{lem:solurep}, we can write the difference between $Y_h^n$ and $y_h(t_n)$ as
$$ y_h(t_n)-Y_h^n= I+II,$$
where the terms $I$ and $II$ are given by
\begin{equation}\label{eqn:split1}
  I=\frac{1}{2\pi\mathrm{i}}\int_{\Gamma_{\theta,\delta}\backslash\Gamma_\tau} e^{zt_n} K(z) v_hdz
\end{equation}
and
\begin{equation}\label{eqn:split2}
  II=\frac{1}{2\pi\mathrm{i}}\int_{\Gamma_\tau} e^{zt_n} \left(K(z)-e^{-z\tau} K(\chi(z))\right)v_hdz.
\end{equation}
This splitting is essential for the error analysis below.
Since the function $|e^{-z\tau}|$ is uniformly bounded on the contour $\Gamma_{\tau}$, we have
\begin{equation}\label{split1}
\begin{split}
    \|  K(z)-e^{-z\tau} K(\chi(z)) \| & \le |e^{-z\tau}|\|  K(z)-K(\chi(z)) \| + |1-e^{-z\tau} |\|K(z)\| \\
    & \le c\|  K(z)-K(\chi(z)) \| + c\tau  |z|\|K(z)\|\\
    & \le c\|K(z)-K(\chi(z))\| + c\tau,
\end{split}
\end{equation}
where the last line, using the resolvent estimate \eqref{eqn:resol}, follows from the inequality
\begin{equation*}
  \|K(z)\| = |z|^{-1}\|-I+zw(z)(zw(z)+A_h)^{-1}\|\leq c|z|^{-1}.
\end{equation*}

Hence, it remains to bound the term $\|K(z)-K(\chi(z))\|$, which will be carried out in several steps.
First we recall a bound on the function $\chi(z)=\tau^{-1}(1-e^{-z\tau})$ \cite[Lemma 3.1]{JinLazarovZhou:2014L1}.
\begin{lemma}\label{lem:para_equiv1}
Let $\chi(z)=\tau^{-1}(1-e^{-z\tau})$. Then for all $z\in\Gamma_\tau$, there hold
\begin{equation*}
 |\chi(z)-z|\le c |z|^2\tau \quad \mbox{and}\quad c_1|z| \le |\chi(z)|\le c_2|z|,
\end{equation*}
and $\chi(z)$ lies in a sector $\Sigma_{\theta'}$ for some $\theta'\in(\pi/2,\pi)$.
\end{lemma}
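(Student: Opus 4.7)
The central observation from which everything follows is that on the truncated contour $\Gamma_\tau$, the quantity $|z\tau|$ is uniformly bounded independent of $\tau$. Indeed, the only unbounded portion of $\Gamma_{\theta,\delta}$ consists of the two rays $z=\rho e^{\pm\mathrm{i}\theta}$, $\rho\ge\delta$, but the truncation $|\Im z|\le\pi/\tau$ forces $\rho\sin\theta\le\pi/\tau$, hence $|z|\tau\le\pi/\sin\theta=:C_0$. I would record this bound at the start of the proof, since each of the three claims relies on it.

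With $|z\tau|\le C_0$ in hand, the first estimate $|\chi(z)-z|\le c|z|^2\tau$ is immediate from the Taylor expansion
\begin{equation*}
\chi(z)-z=\tau^{-1}\bigl(1-e^{-z\tau}-z\tau\bigr)=-\tau^{-1}\sum_{k=2}^{\infty}\frac{(-z\tau)^k}{k!}=-\tfrac{\tau}{2}z^2\Bigl(1+\sum_{k\ge 1}\tfrac{2(-z\tau)^k}{(k+2)!}\Bigr),
\end{equation*}
whose tail is bounded absolutely by a constant depending only on $C_0$. From this, the upper comparability $|\chi(z)|\le c_2|z|$ follows by the triangle inequality $|\chi(z)|\le|z|+c|z|^2\tau\le(1+cC_0)|z|$.

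The lower bound $|\chi(z)|\ge c_1|z|$ and the sector property are the genuinely nontrivial parts and are best handled together. The plan is to write $\chi(z)=z\,g(z\tau)$ with $g(w):=(1-e^{-w})/w$, extended by $g(0)=1$. This $g$ is entire, and its zeros are exactly the nonzero multiples of $2\pi\mathrm{i}$; since $|\Im(z\tau)|\le\pi$ on $\Gamma_\tau$, the image $\{z\tau:z\in\Gamma_\tau\}$ lies in the set
\begin{equation*}
B:=\{w\in\mathbb{C}:|\arg w|\le\theta,\ |\Im w|\le\pi,\ 0<|w|\le C_0\},
\end{equation*}
which avoids every zero of $g$ and has compact closure on which $g$ is continuous and nonvanishing. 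Hence there exist constants $c_1>0$ and $\theta_g\in[0,\pi)$, depending only on $\theta$, such that $|g(w)|\ge c_1$ and $|\arg g(w)|\le\theta_g$ on $B$. The first gives $|\chi(z)|\ge c_1|z|$; the second gives $|\arg\chi(z)|\le|\arg z|+\theta_g$. By choosing the angle $\theta$ of $\Gamma_{\theta,\delta}$ sufficiently close to $\pi/2$ (or, equivalently, checking that the standard contour choice made earlier satisfies $\theta+\theta_g<\pi$), we obtain $\chi(z)\in\Sigma_{\theta'}$ for some $\theta'\in(\pi/2,\pi)$.

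The main obstacle is the sector claim, specifically verifying that $\arg g(w)$ stays bounded away from $\pi-\arg z$ uniformly over $B$. A quantitative compactness argument suffices, but if one prefers a more explicit route, one can split $\Gamma_\tau$ into the regime $|z\tau|\le\eta$ (where Taylor expansion makes $\chi(z)$ a small perturbation of $z$ and $\arg\chi(z)\approx\arg z$) and the regime $\eta\le|z\tau|\le C_0$ (compact, handled by continuity), choosing $\eta$ small but fixed. The remaining estimates, together with the uniformity of all constants in $\tau$, then follow and the lemma is proved.
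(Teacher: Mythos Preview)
The paper does not actually prove this lemma: it is quoted verbatim from \cite[Lemma~3.1]{JinLazarovZhou:2014L1} and used as a black box. So there is no ``paper's proof'' to compare against; your sketch would serve as a self-contained replacement.

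Your argument is sound in outline. The key uniform bound $|z\tau|\le C_0$ on $\Gamma_\tau$ is the right starting point (note that on the circular arc $|z|=\delta$ you are tacitly using $\delta\tau\le c$, which holds in every application in the paper since $\delta=t_n^{-1}\le\tau^{-1}$; it would be cleaner to state this hypothesis). The Taylor-remainder bound for $|\chi(z)-z|$ and the upper comparability are immediate, as you say. Writing $\chi(z)=z\,g(z\tau)$ with $g(w)=(1-e^{-w})/w$ and invoking compactness of $\overline{B}$ together with $g\neq 0$ there is exactly the right mechanism for the lower bound.

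The one place to tighten is the sector claim. Your bound $|\arg\chi(z)|\le|\arg z|+\theta_g\le\theta+\theta_g$ forces you to assume $\theta+\theta_g<\pi$, which is an unnecessary restriction. A cleaner route, valid for every $\theta\in(\pi/2,\pi)$: for $z$ on the upper ray write $w=z\tau$ with $\Im w\in(0,\pi]$, and observe directly that $\Im(1-e^{-w})=e^{-\Re w}\sin(\Im w)\ge 0$, so $\chi(z)$ lies in the closed upper half-plane; moreover $\Im(1-e^{-w})=0$ only at the endpoints $\Im w\in\{0,\pi\}$, where one checks $\arg\chi(z)\in\{\theta,0\}$. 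Continuity on the compact parameter interval $s=|z|\tau\in[0,\pi/\sin\theta]$ then gives $\sup|\arg\chi(z)|<\pi$ with no constraint on $\theta$. The lower ray is symmetric and the arc is handled by the small-$|z\tau|$ perturbation you already describe.
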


Next we give one crucial error estimate on the approximation $\chi(z)w(\chi(z))$ to the kernel $zw(z)$.
\begin{lemma}\label{lem:key1}
For $z\in \Gamma_\tau$, the following bound holds:
\begin{equation*}
|\chi(z)w(\chi(z))-zw(z)| \le c\tau|z|^2w(|z|).
\end{equation*}
\end{lemma}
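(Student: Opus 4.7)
The plan is to reduce the estimate to a pointwise bound on $\chi(z)^\alpha - z^\alpha$, integrated against $\mu(\alpha)$. Since $zw(z)=\int_0^1 z^\alpha\mu(\alpha)\,d\alpha$, I would write
\begin{equation*}
\chi(z)w(\chi(z))-zw(z)=\int_0^1\bigl(\chi(z)^\alpha-z^\alpha\bigr)\mu(\alpha)\,d\alpha,
\end{equation*}
and then control each integrand by the fundamental theorem of calculus along the segment from $z$ to $\chi(z)$:
\begin{equation*}
\chi(z)^\alpha-z^\alpha=\alpha(\chi(z)-z)\int_0^1\bigl(z+s(\chi(z)-z)\bigr)^{\alpha-1}\,ds.
\end{equation*}
This requires the power $\zeta^{\alpha-1}$ to be single-valued and well defined on the segment, which is why I need the segment to lie in a sector strictly smaller than the whole plane.

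The main step, and where I expect the only real work, is the geometric claim that for $z\in\Gamma_\tau$ and all $s\in[0,1]$,
\begin{equation*}
|z+s(\chi(z)-z)|\ge c|z|,
\end{equation*}
uniformly in $s$. By Lemma \ref{lem:para_equiv1}, both $z$ and $\chi(z)$ lie in a common sector $\Sigma_{\theta'}$ with $\theta'<\pi$, and satisfy $c_1|z|\le|\chi(z)|\le c_2|z|$; hence the segment stays in $\Sigma_{\theta'}$ (by convexity of the sector) and, because its two endpoints are comparable in modulus and both lie on the same side of the origin within the sector, it cannot cross near the origin. This will give the comparability $|z+s(\chi(z)-z)|\simeq|z|$. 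Combined with $|\chi(z)-z|\le c|z|^2\tau$, this yields
\begin{equation*}
|\chi(z)^\alpha-z^\alpha|\le c\,\alpha\,|z|^2\tau\cdot|z|^{\alpha-1}=c\,\alpha\,\tau|z|^{\alpha+1},
\end{equation*}
where the constant absorbs $c^{\alpha-1}$ uniformly in $\alpha\in[0,1]$.

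Finally I would integrate against $\mu$ and use the definition of $w$:
\begin{equation*}
|\chi(z)w(\chi(z))-zw(z)|\le c\tau\int_0^1\alpha\,|z|^{\alpha+1}\mu(\alpha)\,d\alpha\le c\tau|z|^2\int_0^1|z|^{\alpha-1}\mu(\alpha)\,d\alpha=c\tau|z|^2 w(|z|),
\end{equation*}
which is exactly the claimed bound. The role of the factor $w(|z|)$ on the right-hand side is natural: it absorbs the distributed-order weight without sacrificing information about $\mu$, and it will be well suited to the subsequent contour estimates needed to bound the splitting terms \eqref{eqn:split1}–\eqref{eqn:split2}.
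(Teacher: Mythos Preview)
Your overall strategy---write $\chi(z)w(\chi(z))-zw(z)=\int_0^1(\chi(z)^\alpha-z^\alpha)\mu(\alpha)\,d\alpha$, apply the fundamental theorem of calculus along the segment, and reduce to a lower bound $|z+s(\chi(z)-z)|\ge c|z|$---matches the paper exactly. The gap is in how you justify that lower bound.

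The sector $\Sigma_{\theta'}$ you invoke has half-angle $\theta'\in(\pi/2,\pi)$, and such a sector is \emph{not} convex: for instance $e^{i(\theta'-\epsilon)}$ and $e^{-i(\theta'-\epsilon)}$ both lie in $\Sigma_{\theta'}$, yet their midpoint $\cos(\theta'-\epsilon)$ sits on the negative real axis, outside $\Sigma_{\theta'}$. So the statement ``the segment stays in $\Sigma_{\theta'}$ by convexity'' is false. More seriously, two points of comparable modulus in a wide sector can have a connecting segment passing arbitrarily close to the origin (e.g.\ $re^{i\epsilon}$ and $re^{i(\pi-\epsilon)}$ have midpoint $ir\sin\epsilon$), so ``comparable moduli $+$ common sector'' alone does not give $|z_\eta|\ge c|z|$. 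Your phrase ``same side of the origin'' is exactly the missing ingredient, but it is not a consequence of Lemma~\ref{lem:para_equiv1} and needs its own proof.

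The paper handles this by splitting $\Gamma_\tau$ into the circular arc and the two rays. On the arc $|z\tau|\le 1$, and a Taylor expansion of $e^{-z\tau}$ gives $z_\eta=z(1+O(|z\tau|))$ with an $O$-constant strictly less than $1$, so $|z_\eta|\ge c|z|$ directly. On the upper ray one computes explicitly that $\Im(\chi(z))\ge 0$ (using $r\tau\sin\theta\le\pi$), so $z$ and $\chi(z)$ lie in the same closed half-plane; together with $\arg z=\theta$ and $\chi(z)\in\Sigma_{\theta'}$ this bounds the angle between them strictly below $\pi$, and then $|z_\eta|\ge\min(|z|,|\chi(z)|)\cos(\theta/2)\ge c|z|$. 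You should supply this half-plane verification (or an equivalent argument) before invoking the segment bound.
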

\begin{proof}
By the intermediate value theorem, for $z\in \Gamma_\tau$, we have
\begin{equation*}
 | \chi(z)^\al-z^\al| =  \al \bigg|\int_z^{\chi(z)} s^{\al-1} \,ds\bigg| \le \al |\chi(z)-z| \max_{\eta\in[0,1]}|z_\eta|^{\al-1} ,
\end{equation*}
where $z_\eta=\eta\chi(z)+ (1-\eta) z$ with $\eta\in [0,1]$. Next we claim $|z_\eta|^{-1}\le c|z|^{-1}$
for $\eta\in[0,1]$. To this end, we split $\Gamma_\tau$ into $\G_\tau=\G_\tau^+ \cup \G_{\tau}^c
\cup \G_\tau^-$, with $\G_\tau^\pm$ being the rays in the upper and lower half plane, respectively, and
$\Gamma_\tau^c$ is the circular arc.
For $z\in \Gamma_\tau^c$, by the Taylor expansion of $e^{-z\tau}$, we have
\begin{equation*}
 z_\eta = z \left(1 + \eta \sum_{j=1}^\infty (-1)^{j}\frac{z^{j}\tau^j}{(j+1)!}\right).
\end{equation*}
In view of the trivial inequality $|z\tau|\leq 1$ for $z\in\G_\tau^c$, we deduce
$| z_\eta |^{-1}\leq c|z|^{-1}$ for $ z\in \Gamma^c_\tau$. It remains to show the assertion for
$ z\in \G_\tau^+ $, and the case $z\in \G_\tau^-$ follows analogously. First we show
$\Im(\chi(z)) > 0 $ for $ z\in \G_\tau^+ $. For $z=re^{\mathrm{i}(\pi-\theta)}$ with $r\tau\in
(\delta,\pi/\sin\theta)$ we have
\begin{equation*}
    \chi(z)=\frac{1}{\tau}\left(1-e^{r\tau\cos\theta}e^{-\mathrm{i}r\tau\sin\theta}\right),
\end{equation*}
and therefore using $r\tau\sin\theta\le \pi$, we get $ \Im(\chi(z)) \geq 0 $. Then Lemma \ref{lem:para_equiv1} yields
\begin{equation*}
    |z_\eta|>\min(|z|,|\chi(z)|){\cos\frac{\theta}{2}}\ge c|z|.
\end{equation*}
This shows the desired claim. Hence, appealing to Lemma \ref{lem:para_equiv1} again implies that for $z\in \Gamma_\tau$ there holds
\begin{equation*}
\begin{split}
  \bigg|\int_0^1 (\chi(z)^\al-z^\al)\mu(\al)\,d\al\bigg| &\le \int_0^1 | \chi(z)^\al-z^\al| \mu(\al)\,d\al \le c\tau|z|\int_0^1 |z|^{\al}\mu(\al) \,d\al=c\tau|z|^2w(|z|),
\end{split}
\end{equation*}
which concludes the proof of the lemma.
\end{proof}

Next we give a crucial error estimate on the approximation $K(\chi(z))$ to the kernel function $K(z)$.
\begin{lemma}\label{lem:K}
Let $\chi(z)=(1-e^{-z\tau})/\tau$. Then for the kernel $K(z)$ in \eqref{eqn:Kernel}, there holds
\begin{equation*}
 \|K(z) - K(\chi(z))\| \le c\tau\quad \forall z\in \Gamma_\tau.
\end{equation*}
\end{lemma}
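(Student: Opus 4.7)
The plan is to decompose $K(z) - K(\chi(z))$ into three pieces, each of which will be shown to be $O(\tau)$ using the scalar estimates already available. Using the identity $A_h(zw(z)I+A_h)^{-1} = I - zw(z)(zw(z)I+A_h)^{-1}$, I would rewrite
\[
  K(z) = -z^{-1}I + w(z)(zw(z)I+A_h)^{-1},
\]
so that
\[
  K(z) - K(\chi(z)) = (\chi(z)^{-1}-z^{-1})I + I_1 + I_2,
\]
with $I_1 = (w(z) - w(\chi(z)))(zw(z)I+A_h)^{-1}$ and
\[
  I_2 = w(\chi(z))\bigl[(zw(z)I+A_h)^{-1} - (\chi(z)w(\chi(z))I+A_h)^{-1}\bigr].
\]
I would then expand $I_2$ using the standard resolvent identity $(P+A_h)^{-1}-(Q+A_h)^{-1} = (Q-P)(P+A_h)^{-1}(Q+A_h)^{-1}$ applied with $P=zw(z)$ and $Q=\chi(z)w(\chi(z))$.

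The scalar piece is immediate from Lemma \ref{lem:para_equiv1}: $|\chi(z)^{-1}-z^{-1}| = |\chi(z)-z|/(|z||\chi(z)|) \le c\tau$. For the two operator pieces, the necessary resolvent bounds follow by combining Lemmas \ref{lem:sector-w}, \ref{lem:para_equiv1}, and \ref{lem:key2}: since $zw(z)$ and $\chi(z)w(\chi(z))$ both lie in a sector $\Sigma_{\theta'}$ with $\theta'\in(\pi/2,\pi)$, the estimate \eqref{eqn:resol} and Lemma \ref{lem:key2} give
\[
  \|(zw(z)I+A_h)^{-1}\| \le \frac{c}{|z|w(|z|)}, \qquad \|(\chi(z)w(\chi(z))I+A_h)^{-1}\| \le \frac{c}{|z|w(|z|)},
\]
where the second bound uses additionally $|\chi(z)|\asymp|z|$ together with the comparison $w(|\chi(z)|)\asymp w(|z|)$ (obtained by factoring $r^{\alpha-1} = (r/r')^{\alpha-1}(r')^{\alpha-1}$ and noting that $(r/r')^{\alpha-1}$ is uniformly bounded in $\alpha\in[0,1]$ whenever $r/r'$ is). Combining Lemma \ref{lem:key1} with the trivial bound $|w(\chi(z))|\le w(|\chi(z)|)\le cw(|z|)$ then yields
\[
  \|I_2\| \le c\,w(|z|)\cdot c\tau|z|^2 w(|z|)\cdot \frac{c}{|z|w(|z|)}\cdot \frac{c}{|z|w(|z|)} = c\tau.
\]

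The main obstacle, and the only step not directly supplied by the earlier lemmas, is the bound $\|I_1\|\le c\tau$, which reduces to the scalar estimate $|w(z)-w(\chi(z))|\le c\tau|z|w(|z|)$. I would prove this by mimicking the intermediate-value argument from Lemma \ref{lem:key1}, now applied to the antiderivative of $s^{\alpha-2}$: for $\alpha\in[0,1]$ and $z_\eta=\eta\chi(z)+(1-\eta)z$,
\[
  |z^{\alpha-1}-\chi(z)^{\alpha-1}| = (1-\alpha)\Bigl|\int_z^{\chi(z)} s^{\alpha-2}\,ds\Bigr| \le (1-\alpha)|\chi(z)-z|\max_{\eta\in[0,1]}|z_\eta|^{\alpha-2} \le c\tau|z|^\alpha,
\]
where the last inequality uses $|z_\eta|\ge c|z|$ (proved exactly as in Lemma \ref{lem:key1}, so that the segment stays in a sector bounded away from the branch cut of $s^{\alpha-2}$) together with $|\chi(z)-z|\le c\tau|z|^2$. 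Integrating against $\mu(\alpha)$ gives $|w(z)-w(\chi(z))|\le c\tau|z|w(|z|)$, and multiplying by the resolvent bound above yields $\|I_1\|\le c\tau$. Summing the three contributions completes the proof.
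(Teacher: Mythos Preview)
Your proof is correct, but it takes a somewhat different route from the paper's. The paper first pulls out the factor $z^{-1}$ by setting $B(z)=zK(z)=-I+zw(z)(zw(z)I+A_h)^{-1}$, shows $\|B(z)-B(\chi(z))\|\le c\tau|z|$ by splitting this difference into two pieces and applying Lemma~\ref{lem:key1} directly (only the combination $zw(z)-\chi(z)w(\chi(z))$ ever appears), and then recovers the bound on $K$ from $K(z)=z^{-1}B(z)$ together with $|z^{-1}-\chi(z)^{-1}|\le c\tau$.

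Your decomposition instead separates the $w$-factor from the $z^{-1}$-factor at the outset, which forces you to control $w(z)-w(\chi(z))$ on its own. This is not supplied by Lemma~\ref{lem:key1}, and you correctly fill the gap by rerunning the intermediate-value argument with $s^{\alpha-2}$ in place of $s^{\alpha-1}$; the segment estimate $|z_\eta|\ge c|z|$ and the sector containment carry over verbatim from the proof of Lemma~\ref{lem:key1}, so the branch issue is handled. The upshot is that the paper's approach is a bit more economical---one scalar estimate instead of two---while yours is equally rigorous and perhaps more transparent about which scalar quantities actually need to be close. Either way the bookkeeping with $w(|\chi(z)|)\asymp w(|z|)$ and the two resolvent bounds is the same.
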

\begin{proof}
Let $B(z)=zK(z)$. Simple computation shows
\begin{equation*}
    \begin{split}
      B(z) - B(\chi(z)) &= zw(z)\left(zw(z)I+A_h\right)^{-1}-\chi(z) w(\chi(z)) \left(\chi(z) w(\chi(z))I+A_h\right)^{-1} \\
        &= zw(z)\left( \left(zw(z)I+A_h\right)^{-1} - \left(\chi(z) w(\chi(z))I+A_h\right)^{-1} \right)  \\
        &\quad + \left(zw(z)-\chi(z)w(\chi(z))\right)\,\left(\chi(z) w(\chi(z))I+A_h\right)^{-1} := I + II.
    \end{split}
\end{equation*}
First, by Lemmas \ref{lem:key2} and \ref{lem:para_equiv1}, there holds
\begin{equation*}
  |\chi(z)w(\chi(z))| \geq c|\chi(z)|w(|\chi(z)|) \geq c|z|w(|z|).
\end{equation*}
Further, by Lemma \ref{lem:sector-w} and \eqref{eqn:resol} and Lemma \ref{lem:key2}, we have
\begin{equation}\label{eqn:resol-wz}
  \|(zw(z)I+A_h)^{-1}\| \leq c|zw(z)|^{-1}\leq c(|z|w(|z|))^{-1}.
\end{equation}
Likewise, in view of Lemmas \ref{lem:para_equiv1} and \ref{lem:sector-w} and \eqref{eqn:resol}, we have
\begin{equation}\label{eqn:resol-chi}
  \|(\chi(z)w(\chi(z))I+A_h)^{-1}\| \leq c|\chi(z)w(\chi(z))|^{-1}\leq c(|z|w(|z|))^{-1}.
\end{equation}
Now, by the identity
\begin{equation*}
  \begin{aligned}
    &\left(zw(z)I+A_h\right)^{-1} - \left(\chi(z) w(\chi(z))I+A_h\right)^{-1} \\
    = &\left(zw(z)-\chi(z)w(\chi(z))\right)\,\left(zw(z)I+A_h\right)^{-1}\left(\chi(z)w(\chi(z))I+A_h\right)^{-1},
  \end{aligned}
\end{equation*}
Lemma \ref{lem:key2}, \eqref{eqn:resol-wz} and \eqref{eqn:resol-chi}, the first term $I$ can be bounded by
\begin{equation*}
    \| I \| \le c\tau |z|^3 w(|z|)^2 \|(zw(z)I+A_h)^{-1}\|\|(\chi(z)w(\chi(z))I+A_h)^{-1}\|\le c\tau |z|.
\end{equation*}
Likewise, with Lemma \ref{lem:key1} and \eqref{eqn:resol-chi}, the second term $II$ can be bounded by
\begin{equation*}
    \begin{split}
      \| II \| & \le   |zw(z)-\chi(z)w(\chi(z))| \| (\chi(z)w(\chi(z))I+A_h)^{-1}\|   \\
     & \le c\tau |z|^2 w(|z|) |zw(|z|)|^{-1}  \le c\tau |z|.
    \end{split}
\end{equation*}
Hence we bound $\| B(z) - B(\chi(z)) \|$ by
\begin{equation*}
 \| B(z) - B(\chi(z)) \| \le c\tau|z|.
\end{equation*}
Last, by Lemma \ref{lem:para_equiv1} and $\| B(z) \| \le c$, we bound $\| K(z) - K(\chi(z)) \|$ by
\begin{equation*}
    \begin{split}
      \|  K(z) - K(\chi(z)) \| & \le   |z^{-1}-\chi(z)^{-1}| \| B(z)\| + |z|^{-1} \| B(z) - B(\chi(z)) \| \\
     & \le c |z-\chi(z)| |z|^{-2} + c\tau \le c\tau,
    \end{split}
\end{equation*}
which completes the proof of the lemma.
\end{proof}

Now we can state an error estimate on the time discretization error for nonsmooth initial data,
{\it i.e.}, $v\in L^2(\Om)$.
\begin{theorem}\label{thm:error-nonsmooth}
Let $u_h$ and $U_h^n$ be the solutions of problems \eqref{eqn:fdesemidis} and \eqref{eqn:fully} with
$v\in L^2(\Om)$, $U_h^0= v_h = P_hv$ and $f\equiv0$, respectively. Then there holds
\begin{equation*}
   \| u_h(t_n)-U_h^n \|_{L^2(\Om)} \le c \tau t_n^{-1}  \| v\|_{L^2\II}.
\end{equation*}
\end{theorem}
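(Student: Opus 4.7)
The plan is to apply the splitting $y_h(t_n) - Y_h^n = I + II$ already set up in \eqref{eqn:split1}--\eqref{eqn:split2}, with the parameter choice $\delta = 1/t_n$ in the contour $\Gamma_{\theta,\delta}$, and to bound each of the two contour integrals separately in $L^2(\Omega)$. Note that $u_h(t_n) - U_h^n = y_h(t_n) - Y_h^n$, and by the $L^2(\Omega)$-stability of $P_h$ from Lemma \ref{lem:prh-bound}, any bound in terms of $\|v_h\|_{L^2(\Omega)}$ yields the claimed bound in terms of $\|v\|_{L^2(\Omega)}$.

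First I will estimate $I$, the integral over the truncated tails $\Gamma_{\theta,\delta}\setminus\Gamma_\tau$. These consist of the ray portions $z=re^{\pm\mathrm{i}\theta}$ with $r\geq \pi/(\tau\sin\theta)$, so in particular $r^{-1}\leq c\tau$ there. Combining this with the uniform bound $\|K(z)\|\leq c|z|^{-1}$ (established above \eqref{split1}) and the estimate $|e^{zt_n}| = e^{rt_n\cos\theta}$ with $\cos\theta<0$, a direct integration in $r$ gives
\begin{equation*}
   \|I\|_{L^2(\Omega)} \le c\tau\int_{\pi/(\tau\sin\theta)}^{\infty} e^{rt_n\cos\theta}\,dr\,\|v_h\|_{L^2(\Omega)} \le c\tau\, t_n^{-1}\|v_h\|_{L^2(\Omega)}.
\end{equation*}

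Second I will estimate $II$, for which the key input is the crucial estimate $\|K(z) - K(\chi(z))\|\leq c\tau$ for $z\in\Gamma_\tau$ from Lemma \ref{lem:K}, combined with inequality \eqref{split1}, which together give $\|K(z) - e^{-z\tau}K(\chi(z))\|\leq c\tau$ on $\Gamma_\tau$. Hence
\begin{equation*}
   \|II\|_{L^2(\Omega)} \le c\tau\int_{\Gamma_\tau} e^{\Re(z)t_n}\,|dz|\,\|v_h\|_{L^2(\Omega)}.
\end{equation*}
Parameterizing $\Gamma_\tau$ into the two ray portions $\{re^{\pm\mathrm{i}\theta}:\delta\le r\le \pi/(\tau\sin\theta)\}$ and the circular arc of radius $\delta=1/t_n$, the ray contribution is controlled by $\int_\delta^\infty e^{rt_n\cos\theta}\,dr\le c/t_n$, while the circular arc has length $2\theta/t_n$ and the integrand there is bounded by $e$, giving another $c/t_n$. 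Combining these yields $\|II\|_{L^2(\Omega)}\leq c\tau\, t_n^{-1}\|v_h\|_{L^2(\Omega)}$, and adding the bounds for $I$ and $II$ completes the argument.

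The main obstacle — showing that the approximation error on $\Gamma_\tau$ is $O(\tau)$ uniformly — has already been surmounted in Lemma \ref{lem:K} via the refined bounds on $w(z)$ (Lemmas \ref{lem:mu}--\ref{lem:key2}) together with Lemma \ref{lem:para_equiv1} for $\chi(z)$. The only remaining delicate point is ensuring that the factor $\tau\, t_n^{-1}$ emerges from \emph{both} pieces: for $I$ it comes from exploiting the lower bound $r\geq \pi/(\tau\sin\theta)$ to extract one power of $\tau$ from $r^{-1}$, and for $II$ it comes from standard exponential-type contour bounds that produce the factor $t_n^{-1}$ while the factor $\tau$ is provided by Lemma \ref{lem:K}. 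The rest is routine parameterization of the Hankel contour.
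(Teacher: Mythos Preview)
Your proposal is correct and follows essentially the same approach as the paper's proof: the same splitting \eqref{eqn:split1}--\eqref{eqn:split2} with $\delta=1/t_n$, the same bound $\|K(z)\|\le c|z|^{-1}$ together with $r^{-1}\le c\tau$ on the tails for $I$, and Lemma~\ref{lem:K} combined with \eqref{split1} followed by the standard ray/arc parameterization for $II$. The only cosmetic difference is that the paper writes out the arc contribution as $\int_{-\theta}^{\theta} e^{\cos\psi}\,t_n^{-1}\,d\psi$ rather than stating ``length $2\theta/t_n$, integrand $\le e$,'' but these are the same estimate.
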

\begin{proof}
It suffices to bound the terms $I$ and $II$ defined in \eqref{eqn:split1} and \eqref{eqn:split2}, respectively.
We choose $\delta=t_n^{-1}$ in the contour $\Gamma_{\delta,\theta}$. By \eqref{eqn:resol} and direct calculation, we
bound the first term $I$ by
\begin{equation}\label{eqn:errI}
\begin{split}
    \| I \|_{L^2\II} &\le c \int_{\pi/(\tau\sin\theta)}^\infty  e^{rt_n\cos\theta } r^{-1} \,dr \| v_h\|_{L^2\II}\\
    & \le c\tau \|v_h\|_{L^2\II} \int_{0}^\infty  e^{rt_n\cos\theta} \,dr \le c\tau t_n^{-1} \| v_h\|_{L^2\II}.
\end{split}
\end{equation}
Using Lemma \ref{lem:K}, we arrive at the following bound for the second term $II:$
\begin{equation}\label{eqn:errII}
\begin{split}
    \| II \|_{L^2\II}
    &\le c \tau \| v_h\|_{L^2\II} \left(\int_{1/t_n}^{\pi/(\tau\sin\theta)}  e^{r t_n\cos\theta}   \,dr +
    \int_{-\theta}^{\theta} e^{\cos\psi} t_n^{-1}\,d\psi\right)  \le c t_n^{-1}\tau \|v_h\|_{L^2\II}.
\end{split}
\end{equation}
Combining estimates \eqref{eqn:errI} and \eqref{eqn:errII} yields
\begin{equation*}
   \| y_h(t_n)-Y_h^n \|_{L^2(\Om)} \le c \tau t_n^{-1}  \|v_h\|_{L^2\II},
\end{equation*}
and the desired result follows directly from the identity $U_h^n-u_h(t_n)=Y_h^n-y_h(t_n)$
and the stability of the projection $P_h$ in $L^2(\Om)$.
\end{proof}

\begin{remark}\label{rem:stab}
The $L^2(\Om)$ stability of the time stepping scheme \eqref{eqn:fully} follows directly from Theorem \ref{thm:error-nonsmooth}.
\end{remark}

Next we turn to smooth initial data, {\it i.e.}, $Av\in L^2\II$. To this end, we first
state an alternative estimate on the solution kernel $K(z)$.
\begin{lemma}\label{lem:errKs}
Let  $K^s(z)=-z^{-1}(zw(z)I+A_h)^{-1}$. Then for any $z\in \Gamma_\tau$, there holds
\begin{equation*}
\| K^s(z)-K^s(\chi(z)) \|  \le  c\tau\frac{\log|z|}{|z|-1}.
\end{equation*}
\end{lemma}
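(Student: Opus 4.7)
The plan is to mirror the structure of the proof of Lemma~\ref{lem:K}, but accounting for the fact that $K^s(z)$ lacks the $A_h$ factor, which is responsible for the extra $\log|z|/(|z|-1)$ weight on the right-hand side. I would introduce the auxiliary operator-valued function $B^s(z):=zK^s(z)=-(zw(z)I+A_h)^{-1}$ and write
\begin{equation*}
K^s(z)-K^s(\chi(z)) = z^{-1}\bigl(B^s(z)-B^s(\chi(z))\bigr) + \bigl(z^{-1}-\chi(z)^{-1}\bigr)B^s(\chi(z)),
\end{equation*}
so that the task reduces to bounding $\|B^s(z)-B^s(\chi(z))\|$ and $\|B^s(\chi(z))\|$ and controlling $|z^{-1}-\chi(z)^{-1}|$.

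For the first term, I would apply the standard resolvent identity
\begin{equation*}
B^s(\chi(z))-B^s(z) = \bigl(\chi(z)w(\chi(z))I+A_h\bigr)^{-1}\bigl(zw(z)-\chi(z)w(\chi(z))\bigr)\bigl(zw(z)I+A_h\bigr)^{-1},
\end{equation*}
and then invoke Lemma~\ref{lem:key1} to bound the middle factor by $c\tau|z|^2 w(|z|)$ and estimates \eqref{eqn:resol-wz}--\eqref{eqn:resol-chi} to bound each resolvent by $c(|z|w(|z|))^{-1}$. Multiplying gives $\|B^s(z)-B^s(\chi(z))\|\le c\tau/w(|z|)$, and then Lemma~\ref{lem:key2} yields $1/w(|z|)\le c|z|\log|z|/(|z|-1)$. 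Dividing by $|z|$ produces the required bound $c\tau\log|z|/(|z|-1)$ for the first summand.

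For the second summand, Lemma~\ref{lem:para_equiv1} gives $|\chi(z)-z|\le c\tau|z|^2$ and $|\chi(z)|\ge c|z|$, so
\begin{equation*}
|z^{-1}-\chi(z)^{-1}| = \frac{|\chi(z)-z|}{|z\chi(z)|}\le c\tau,
\end{equation*}
while $\|B^s(\chi(z))\|\le c(|z|w(|z|))^{-1}\le c\log|z|/(|z|-1)$ by \eqref{eqn:resol-chi} and Lemma~\ref{lem:key2}. Combining the two bounds and summing the two contributions completes the proof.

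The only subtle point I anticipate is ensuring the bound on the resolvent at $\chi(z)$ really translates from a bound at $\chi(z)$ in terms of $|\chi(z)|w(|\chi(z)|)$ to one in terms of $|z|w(|z|)$; this is precisely what the sandwich $c_1|z|\le|\chi(z)|\le c_2|z|$ (Lemma~\ref{lem:para_equiv1}) together with the monotonicity argument already invoked in the derivation of \eqref{eqn:resol-chi} supplies. Once that is in place, the estimate is a direct copy of the proof of Lemma~\ref{lem:K} with the exponent balance shifted by one power of $|z|$, producing the extra $\log|z|/(|z|-1)$ factor rather than the constant bound.
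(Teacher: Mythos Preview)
Your proposal is correct and follows essentially the same approach as the paper: introduce $B^s(z)=-(zw(z)I+A_h)^{-1}$, bound $\|B^s(z)-B^s(\chi(z))\|$ via the resolvent identity together with Lemma~\ref{lem:key1} and \eqref{eqn:resol-wz}--\eqref{eqn:resol-chi}, split $K^s(z)-K^s(\chi(z))$ into two pieces, and finish with Lemma~\ref{lem:key2}. The only cosmetic difference is that the paper uses the splitting $K^s(z)-K^s(\chi(z))=(z^{-1}-\chi(z)^{-1})B^s(z)+\chi(z)^{-1}\bigl(B^s(z)-B^s(\chi(z))\bigr)$, whereas you use the symmetric variant with $B^s(\chi(z))$ and $z^{-1}$; both lead to the same bound $c\tau|zw(z)|^{-1}$ before invoking Lemma~\ref{lem:key2}.
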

\begin{proof}
Let $B^s(z)=-(zw(z)I+A_h)^{-1}$. Then by the trivial inequality
\begin{equation*}
  B^s(z)-B^s(\chi(z)) = \chi(z)w(\chi(z))-zw(z))   \left(zw(z)I +A_h\right)^{-1} \left(\chi(z)w(\chi(z))I+A_h\right)^{-1}
\end{equation*}
Lemma \ref{lem:key1}, and \eqref{eqn:resol-wz} and \eqref{eqn:resol-chi}, we deduce immediately
\begin{equation*}
\| B^s(z)-B^s(\chi(z)) \|  \le  c \tau  |w(z)|^{-1} .
\end{equation*}
Appealing to \ref{eqn:resol-wz} again, we have $\| B^s(z) \| \le c|zw(z)|^{-1}$, and thus
\begin{equation*}
    \begin{split}
      \|  K^s(z) - K^s(\chi(z)) \| & \le   |z^{-1}-\chi(z)^{-1}| \| B^s(z)\| + |\chi(z)|^{-1} \| B^s(z) - B^s(\chi(z)) \|\\
      &\le c|z-\chi(z)| |z|^{-3}|w(z)|^{-1} + c \tau  |zw(z)|^{-1}
      \le c \tau |zw(z)|^{-1}.
    \end{split}
\end{equation*}
Then the desired result follows from Lemma \ref{lem:key2}.
\end{proof}

Now we can state an error estimate for smooth initial data $Av \in L^2\II$.
\begin{theorem}\label{thm:error-smooth}
Let $u_h$ and $U_h^n$ be the solutions of problems \eqref{eqn:fdesemidis} and \eqref{eqn:fully} with
$Av\in L^2(\Om)$, $U_h^0= v_h=R_hv$ and $f\equiv0$, respectively. Then for $\ell_2(t)=\log \left(\max(t^{-1},2)\right)$, there holds
\begin{equation*}
    \| u_h(t_n)-U_h^n \|_{L^2(\Om)} \le c \tau \ell_2(t) \| Av \|_{L^2\II}.
\end{equation*}
\end{theorem}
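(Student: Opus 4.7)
The plan is to mirror the proof of Theorem~\ref{thm:error-nonsmooth}, but to work with the smoothed kernel $K^s(z) = -z^{-1}(zw(z)I+A_h)^{-1}$ of Lemma~\ref{lem:errKs} in place of $K(z)$. The starting observation is that, since $v_h = R_h v$ and $A_h R_h = P_h A$, one has
\begin{equation*}
K(z)v_h = -z^{-1}(zw(z)I+A_h)^{-1}A_h R_h v = K^s(z)P_h A v,
\end{equation*}
so the contour representations of $y_h(t_n)$ and $Y_h^n$ from Lemma~\ref{lem:solurep} can be rewritten as integrals of $e^{zt_n}K^s(z)P_h A v$ and $e^{zt_{n-1}}K^s(\chi(z))P_h A v$, respectively. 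Choosing $\delta = 1/t_n$ in $\Gamma_{\theta,\delta}$, I would split $y_h(t_n)-Y_h^n = I + II$ exactly as in \eqref{eqn:split1}--\eqref{eqn:split2}, with $P_h A v$ factored out of the integrand.

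For term $I$, I would combine the resolvent bound \eqref{eqn:resol-wz} with Lemma~\ref{lem:key2} to obtain
\begin{equation*}
\|K^s(z)\| \le c|z|^{-1}|zw(z)|^{-1} \le c\frac{\log|z|}{|z|(|z|-1)},
\end{equation*}
and then integrate over the tail $\Gamma_{\theta,\delta}\setminus\Gamma_\tau$. Because the lower end $r = \pi/(\tau\sin\theta)$ is large, a direct calculation bounds $\|I\|_{L^2\II}$ by $c\tau\|Av\|_{L^2\II}$. For term $II$, the essential input is Lemma~\ref{lem:errKs}, combined with
\begin{equation*}
\|K^s(z)-e^{-z\tau}K^s(\chi(z))\| \le \|K^s(z)-K^s(\chi(z))\| + |1-e^{-z\tau}|\,\|K^s(z)\| \le c\tau\frac{\log|z|}{|z|-1},
\end{equation*}
where the second summand is absorbed using $|1-e^{-z\tau}|\le c\tau|z|$ and the bound on $K^s$ above.

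Inserting this into \eqref{eqn:split2} reduces the task to estimating
\begin{equation*}
\tau\left(\int_{1/t_n}^{\pi/(\tau\sin\theta)} e^{rt_n\cos\theta}\frac{\log r}{r-1}\,dr + \int_{-\theta}^{\theta} e^{\cos\psi}\frac{\log(1/t_n)}{1-t_n}\,d\psi\right),
\end{equation*}
the first integral coming from the two rays of $\Gamma_\tau$ and the second from the arc of radius $1/t_n$ (after using $|dz| = (1/t_n)\,d\psi$ on the arc to absorb one factor of $1/t_n$). After the rescaling $s = r t_n$, the ray integral is controlled by $c(1+\log t_n^{-1})$. The arc term is then handled exactly as in the proof of Theorem~\ref{thm:reg}: by the monotonicity of $x\mapsto \log(x)/(1-x)$ on $\mathbb{R}^+$, one has $\log(t_n^{-1})/(1-t_n)\le 2\log(t_n^{-1})$ for $t_n\le 1/2$ and $\le 2\log 2$ for $t_n\ge 1/2$, producing the factor $\ell_2(t_n) = \log(\max(t_n^{-1},2))$. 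Combining these bounds and invoking the $L^2\II$-boundedness of $R_h$ gives the advertised estimate.

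The step I expect to be most delicate is precisely this logarithmic bookkeeping: the pointwise bound on $K^s$ is only $O(\log|z|/(|z|-1))$, which on the arc $|z| = 1/t_n$ is close to $t_n\log(t_n^{-1})$, and one must carefully verify that integration over $\Gamma_\tau$ produces the mild factor $\ell_2(t_n)$ rather than a higher power of $\log t_n^{-1}$ or an extra $t_n^{-1}$ blow-up. This is exactly why the proof of Theorem~\ref{thm:reg} introduced the monotonicity dichotomy, and the same trick is the right tool here.
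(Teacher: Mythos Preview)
Your approach is essentially identical to the paper's: the same $K^s$ representation via $A_hR_h=P_hA$, the same splitting $I+II$, the same use of Lemma~\ref{lem:errKs} for $II$, and the same monotonicity dichotomy for $\log(t_n^{-1})/(1-t_n)$. Two minor corrections: the tail term $I$ also carries the $\ell_2(t_n)$ factor (after extracting one power of $\tau$ via $r^{-1}\le c\tau$, the remaining integral $\int e^{rt_n\cos\theta}\frac{\log r}{r-1}\,dr$ still yields $\log(t_n^{-1})/(1-t_n)$, not a uniform constant), and in the last step you need $A_hR_h=P_hA$ together with the $L^2$-stability of $P_h$, not ``$L^2$-boundedness of $R_h$''.
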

\begin{proof}
Let $K^s(z)=-z^{-1}\left(zw(z)I+A_h\right)^{-1}$.
Then we can rewrite the error as
\begin{equation}\label{eqn:Vsplit2}
\begin{split}
   y_h(t_n) - Y_h^n=&\frac{1}{2\pi\mathrm{i}}\int_{\Gamma_{\theta,\delta}\backslash\Gamma_\tau} e^{zt_n} K^s(z)A_hv_hdz\\
&+\frac{1}{2\pi\mathrm{i}}\int_{\Gamma_\tau} e^{zt_n} \left(K^s(z)-e^{-z\tau} K^s(\chi(z))\right) A_hv_hdz := I+II.
\end{split}
\end{equation}
By Lemma \ref{lem:errKs} we have for $z\in\G_\tau$
\begin{equation*}
  \| K^s(z)-e^{-z\tau} K^s(\chi(z)) \| \le c\tau\frac{\log|z|}{|z|-1}.
\end{equation*}
By setting $\delta=1/t_n$ and by the monotonicity of the function $f(x)=\frac{\log(x)}{1-x}$ on $\mathbb{R}^+$,
we derive the following bound for the term $II$
\begin{equation*}\label{eqn:errIsmooth}
\begin{split}
    \| II \|_{L^2\II} &\le c \tau \| A_hv_h \|_{L^2\II}\left(\int_{1/t_n}^{\pi/(\tau\sin\theta)}  e^{r t_n\cos\theta} \frac{\log r}{r-1}dr
     + \int_{-\theta}^{\theta}  e^{\cos\psi} \frac{\log(t_n^{-1})}{1-t_n}d\psi \right)\le c \frac{\log(t_n^{-1})}{1-t_n} \tau \|A_hv_h\|_{L^2\II}.
\end{split}
\end{equation*}
Now \eqref{eqn:resol} implies that for all $z\in \G_{\theta, \delta}$, $\| K^s(z) \|\le c|z|^{-1}|zw(z)|^{-1}$. Therefore,
using Lemma \ref{lem:key2}, we deduce
\begin{equation}\label{eqn:errIIsmooth}
\begin{split}
    \| I \|_{L^2\II} &\le c \| A_hv_h \|_{L^2\II} \int_{\pi/(\tau\sin\theta)}^\infty  e^{r t_n\cos\theta} r^{-2}|w(r)|^{-1} \,dr\\
    &\le c \tau \| A_hv_h \|_{L^2\II} \int_{1/t_n}^\infty  e^{r t_n\cos\theta}\frac{\log r}{r-1}\,dr \le  c \frac{\log(t_n^{-1})}{1-t_n} \tau \|A_hv_h\|_{L^2\II}.
\end{split}
\end{equation}
Finally, we observe that if $t_n^{-1}\ge2$, {\it i.e.} $t_n\le 1/2$, then $\frac{\log(t_n^{-1})}{1-t_n}\le 2\log(t_n^{-1}).$
Otherwise if $t_n^{-1}<2$, {\it i.e.} $t_n\ge1/2$, then by the monotonicity of the function $f(x)=\frac{\log(x)}{1-x}$ on
$\mathbb{R}^+$, we deduce $\frac{\log(t_n^{-1})}{1-t_n}= \frac{\log(t_n)}{t_n-1}\le 2\log(2).$ Then the desired result follows
from \eqref{eqn:errIsmooth}, \eqref{eqn:errIIsmooth} and the identities $U_h^n-u_h(t_n)=Y_h^n-y_h(t_n)$ and $A_hR_h=P_hA$.
\end{proof}

The next theorem gives error estimates for the fully discrete scheme \eqref{eqn:fully}, which follow
from Theorems \ref{thm:nonsmooth-initial-op}, \ref{thm:smooth-initial-op}, \ref{thm:error-nonsmooth}
and \ref{thm:error-smooth} and the triangle inequality.
\begin{theorem}\label{thm:error_fully_CQ}
Let $u$ and $U_h^n$ be the solutions of problems \eqref{eqn:fde} and \eqref{eqn:fully} with
$U_h^0= v_h$ and $f\equiv0$, respectively. Then for  $\ell_1(t)=\log(2T/t)^{-1}$ and $\ell_2(t)=\log \left(\max(t^{-1},2)\right)$
and $t_n=n\tau$, the following error estimates hold.
\begin{itemize}
  \item[(a)] If $Av\in L^2\II$ and $v_h=R_h v$, then for $n \ge 1$
  \begin{equation*}
   \| u(t_n)-U_h^n \|_{L^2(\Om)} \le c(\tau \ell_2(t_n) + h^2)  \| Av \|_{L^2\II}.
  \end{equation*}
  \item[(b)] If $v\in L^2(\Om)$ and $v_h=P_hv$, then for $n \ge 1$
  \begin{equation*}
   \| u(t_n)-U_h^n \|_{L^2(\Om)} \le c_T \left (\tau + h^2 \ell_1(t_n) \right )t_n^{-1}  \| v\|_{L^2\II}.
  \end{equation*}
\end{itemize}
\end{theorem}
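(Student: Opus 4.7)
The plan is straightforward: decompose the total error into the space semidiscrete error and the time discretization error, then apply the two pairs of estimates already proved.

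First I would write
\begin{equation*}
u(t_n)-U_h^n = \bigl(u(t_n)-u_h(t_n)\bigr) + \bigl(u_h(t_n)-U_h^n\bigr),
\end{equation*}
where $u_h(t)$ denotes the solution of the space semidiscrete problem \eqref{eqn:fdesemidis} with the same initial approximation $v_h$ as used by the fully discrete scheme. Then by the triangle inequality
\begin{equation*}
\|u(t_n)-U_h^n\|_{L^2(\Om)} \le \|u(t_n)-u_h(t_n)\|_{L^2(\Om)} + \|u_h(t_n)-U_h^n\|_{L^2(\Om)}.
\end{equation*}

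For part (a), with $Av\in L^2(\Om)$ and $v_h=R_hv$, the first term is bounded by $ch^{2}\|Av\|_{L^2(\Om)}$ via Theorem \ref{thm:smooth-initial-op}, while the second term is bounded by $c\tau\ell_2(t_n)\|Av\|_{L^2(\Om)}$ via Theorem \ref{thm:error-smooth}. Summing these gives the claimed estimate $c(\tau\ell_2(t_n)+h^2)\|Av\|_{L^2(\Om)}$.

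For part (b), with $v\in L^2(\Om)$ and $v_h=P_hv$, Theorem \ref{thm:nonsmooth-initial-op} bounds the space error by $c_T h^{2}t_n^{-1}\ell_1(t_n)\|v\|_{L^2(\Om)}$, and Theorem \ref{thm:error-nonsmooth} bounds the time discretization error by $c\tau t_n^{-1}\|v\|_{L^2(\Om)}$; combining these yields the asserted bound $c_T(\tau + h^2\ell_1(t_n))t_n^{-1}\|v\|_{L^2(\Om)}$. Since all four ingredients are already established, there is no real obstacle; the only subtle point is simply to ensure that the space semidiscrete solution $u_h$ used in the splitting is started from the \emph{same} $v_h$ that is fed into the fully discrete scheme, so that Theorems \ref{thm:error-nonsmooth} and \ref{thm:error-smooth} apply directly and the $L^2$-stability of $P_h$ (respectively the $H^2$-stability of $R_h$) absorbs the initial approximation into the constant.
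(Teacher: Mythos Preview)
Your proposal is correct and matches the paper's own proof exactly: the paper simply states that the result follows from Theorems \ref{thm:nonsmooth-initial-op}, \ref{thm:smooth-initial-op}, \ref{thm:error-nonsmooth} and \ref{thm:error-smooth} together with the triangle inequality, which is precisely the splitting and combination you describe.
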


\begin{remark}
For distributed order time fractional diffusion, the error estimate involves a log factor in time for
smooth initial data, which  is reminiscent of the asymptotic behavior of the solution at small time, cf. Theorem
\ref{thm:reg}. This factor is not present for the single term and multi-term time fractional diffusion
\cite{JinLazarovLiuZhou:2014,JinLazarovZhou:2013}.
\end{remark}

\section{Numerical experiments and discussions}\label{sec:numer}
Now we present numerical results to verify the convergence theory. To this end, we let the domain $\Omega$ to be the unit internal
$\Om=(0,1)$ and consider the following three examples with smooth, discontinuous, and singular initial data:
\begin{itemize}
  \item[(a)] $v(x)=\sin(2\pi x) \in H^2(\Om)\cap H_0^1(\Om)$;
  \item[(b)] $v=\chi_{(0,1/2)}\in H^{{1/2}-\epsilon}(\Om)$ with $\epsilon\in(0,1/2)$,  and $\chi_{S}$
the characteristic function of a set $S$;
  \item[(c)] $v(x)=x^{-1/4}\in H^{{1/4}-\epsilon}(\Om)$ with $\epsilon\in(0,1/4)$.
\end{itemize}
We measure the temporal discretization error by the normalized $L^2\II$ errors $\| u(t_n)-U_{N,h}(t_n) \|_{L^2\II}/
\| v \|_{L^2\II}$ or $\| u(t_n)-U_h^n\|_{L^2\II}/\| v \|_{L^2\II}$, and the spatial discretization error by the
normalized $L^2\II$ and $H^1\II$ errors, \emph{i.e.}, $\|u(t)-u_h(t)\|_{L^2\II}/\|v\|_{L^2\II}$ and $\|\nabla(u(t)-u_h(t))
\|_{L^2\II}/\|v\|_{L^2\II}$. In the computations, we divide the domain $\Om$ into $M$ equally spaced subintervals with a mesh size
$h=1/M$. Since the exact solution $u(t)$ is not available in closed form, we compute the reference solution using a much
finer mesh.

\subsection{Numerical results for the semidiscrete scheme}

First we examine the convergence behavior of the space semidiscrete scheme. To this end, we fix $N=10$ in the Laplace transform approach
such that the error due to time discretization is negligible. The numerical results are given in Tables
\ref{tab:smooth-space}--\ref{tab:nonsmooth-space}. In the table, \texttt{rate} denotes the empirical convergence rates when the
mesh size $h$ halves, and the numbers in the bracket denote the theoretical rates. For all three initial data, the $L^2\II$ and $H^1\II$
norms of the error exhibit second and first order convergence rates, respectively, which agrees well with the theoretical prediction, \emph{cf.}
Theorems \ref{thm:nonsmooth-initial-op} and \ref{thm:smooth-initial-op}. The convergence of the semidiscrete scheme is robust in that
the convergence rates hold for both smooth and nonsmooth initial data. The error increases as $t\to0$, which is attributed to the weak
singularity of the solution as $t\to0$, \emph{cf.} Theorem \ref{thm:reg}.

\begin{table}[h!]
\caption{Numerical results for the standard semidiscrete Galerkin FEM for smooth initial data, Example (a)  with $N=10$ and $\mu(\al)=(\al-1/2)^2$.}
\label{tab:smooth-space}
\begin{center}
     \begin{tabular}{|c|c|cccccc|c|}
     \hline
   $t$&  
   {$M$} & 10 & 20 &40 &80 & 160 &320 & rate \\
     \hline
       & $L^2(\Om)$  &2.79e-5&7.02e-6&1.76e-6&4.39e-7&	1.09e-7&2.70e-8& 2.00 (2.00)\\
     \cline{2-8}
      \rb{1}         & $H^1(\Om)$  & 8.84e-4 & 4.44e-4 & 2.22e-4 & 1.11e-4 & 5.23e-5 & 2.36e-5 &1.05 (1.00)\\
     \hline
       & $L^2(\Om)$  & 2.40e-4 & 6.05e-5 & 1.52e-5 & 3.79e-6 & 9.35e-6& 2.33e-7&2.00 (2.00)\\
     \cline{2-8}
         \rb{$10^{-1}$}      & $H^1(\Om)$  & 7.03e-3 & 3.53e-3 & 1.77e-3 & 8.84e-4 & 4.16e-4 & 1.88e-4&1.00 (1.00)\\
     \hline
          & $L^2(\Om)$  & 6.38e-3 & 1.61e-3 & 4.03e-4 & 1.01e-4 & 2.51e-5& 6.21e-6&2.00 (2.00)\\
     \cline{2-8}
      \rb{$10^{-3}$}         & $H^1(\Om)$  & 1.41e-1 & 7.04e-2 & 3.53e-2 & 1.76e-2 & 3.75e-3 & 1.65e-3&1.07  (1.00)\\
     \hline
     \end{tabular}
\end{center}
\end{table}

\begin{table}[h!]
\caption{Numerical results for the standard semidiscrete Galerkin FEM for discontinuous initial data, Example (b) with $N=10$ and $\mu(\al)=(\al-1/2)^2$.}
\label{tab:discon-space}
\begin{center}
     \begin{tabular}{|c|c|cccccc|c|}
     \hline
     $t$&  
     {$M$} & 10 & 20 &40 &80 & 160 &320 & rate \\
     \hline
      & $L^2(\Om)$ & 3.97e-5&9.94e-6&2.48e-6&6.21e-7&	1.55e-7&3.87e-8& 2.00 (2.00)\\
     \cline{2-8}
     \rb{1}          & $H^1(\Om)$  & 1.26e-3 & 6.29e-4 & 3.15e-4 & 1.55e-4 & 7.63e-5 & 3.68e-5 & 1.01 (1.00)\\
     \hline
       & $L^2(\Om)$  & 5.81e-4 & 1.45e-4 & 3.64e-5 & 9.12e-6 & 2.28e-6& 5.69e-7& 2.00 (2.00)\\
     \cline{2-8}
        \rb{$10^{-2}$}        & $H^1(\Om)$  & 1.28e-2 & 6.38e-3 & 3.19e-3 & 1.57e-3 & 7.73e-4 & 3.73e-4& 1.02 (1.00)\\
     \hline
          & $L^2(\Om)$  & 6.34e-3 & 1.59e-3 & 3.96e-4 & 9.92e-5 & 2.48e-5& 6.18e-6& 2.00 (2.00)\\
     \cline{2-8}
       \rb{$10^{-3}$}         & $H^1(\Om)$  & 1.73e-1 & 8.65e-2 & 4.32e-2 & 2.14e-2 & 1.04e-2 & 5.06e-3&  1.02 (1.00)\\
     \hline
     \end{tabular}
\end{center}
\end{table}

\begin{table}[h!]
\caption{Numerical results for the standard semidiscrete Galerkin FEM for singular initial data, Example (c) with $N=10$ and $\mu(\al)=(\al-1/2)^2$.}
\label{tab:nonsmooth-space}
\begin{center}
     \begin{tabular}{|c|c|cccccc|c|}
     \hline
     $t$& 
     {$M$} & 10 & 20 &40 &80 & 160 &320 & rate \\
     \hline
      & $L^2\II$ & 3.82e-5&9.67e-6&2.44e-6&6.12e-7&	1.53e-7&3.79e-8&  2.00 (2.00)\\
     \cline{2-8}
     \rb{1}          & $H^1(\Om)$  & 1.21e-3 & 6.13e-4 & 3.09e-4 & 1.55e-4 & 7.33e-5 & 3.33e-5 & 1.05 (1.00)\\
     \hline
       & $L^2(\Om)$  & 6.72e-4 & 1.69e-4 & 4.23e-5 & 1.06e-5 & 2.63e-6& 6.51e-7& 2.00 (2.00)\\
     \cline{2-8}
      \rb{$10^{-2}$}          & $H^1(\Om)$  & 1.38e-2 & 6.92e-3 & 3.47e-3 & 1.74e-3 & 8.18e-4 & 3.71e-4&  1.06 (1.00)\\
     \hline
          & $L^2(\Om)$  & 3.48e-3 & 8.76e-4 & 2.20e-4 & 5.49e-5 & 1.37e-5& 3.36e-6&  2.00 (2.00)\\
     \cline{2-8}
      \rb{$10^{-3}$}          & $H^1(\Om)$  & 1.49e-1 & 7.45e-2 & 3.73e-2 & 1.86e-2 & 8.76e-3& 3.97e-3 &  1.07 (1.00)\\
     \hline
     \end{tabular}
\end{center}
\end{table}

\subsection{Numerical results for the fully discrete scheme I}
Next we illustrate the convergence of the first fully discrete scheme based on the Laplace transform. To make the spatial discretization
error negligible, we fix the spatial mesh size $h$ at $h=10^{-5}$. In all numerical simulations, the optimal contour parameters
$\lambda$ and $\psi$ in the parameterization \eqref{eqn:curve} and $k$ in \eqref{eqn:trun-quad-symm} are chosen as suggested in
the proof of Lemma \ref{lem:quad-error} (see also \cite{WeidemanTrefethen:2007}). Moreover, $\lambda$ is fixed, independent of $t$,
with which the elliptic problems \eqref{eqn:ellip}
are solved for each time $t.$ The numerical results are summarized in Tables \ref{tab:time-error}
and \ref{tab:time-error-exp-chi} for the weight functions $\mu(\al)=(\al-1/2)^2$ and $\mu(\al)=\chi_{[1/2,1]}(\al)$, respectively.
The results indicate an exponential convergence with respect to the number $N$ of quadrature points on hyperbolic contour, decaying at
a rate about $e^{-2.15N}$ and $e^{-2.14N}$ for $\mu(\al)=(\al-1/2)^2$ and $\mu(\al)=\chi_{[1/2,1]}(\al)$, respectively, which agree
well with the theoretical predictions from Theorem \ref{thm:error_fully}. Note that even though the weight function $\mu(\alpha)=
\chi_{[1/2,1]}(\alpha)$ does not satisfy the assumption $\mu(0)\mu(1)>0$, the empirical convergence rates still agree well with the
theoretical prediction, which calls for further theoretical study. Further, the convergence rate is independent of time $t$, and thus
the scheme is robust. Interestingly, the smoothness of the initial data $v$ does not affect much the time discretization errors, even
for small time instances, \emph{cf.} Table \ref{tab:time-sing-smalltime-Laplace}.

\begin{table}[htb!]
\caption{The $L^2$ errors for initial data (a)-(c)  with $h=10^{-5}$ and $\mu(\al)=(\al-1/2)^2$, by the Laplace transform method. The
notation \texttt{$r$} denotes the exponential convergence rate in the error $\|u_{N,h}^n-u(t_n) \|_{L^2(\Om)} \le C e^{-rN}$.}
\label{tab:time-error}
\begin{center}
\vspace{-.3cm}{\setlength{\tabcolsep}{7pt}
     \begin{tabular}{|c|c|cccccc|c|}
     \hline
      case & $t\ \backslash\ N$ &$3$ &$5$ &$7$ & $9$ & $11$ & $13$ & $r$ \\
     \hline
      & 1  &1.33e-6 &1.49e-8 &1.26e-10 &2.20e-12 &3.54e-14 &8.24e-17 & 2.35\\
    (a)&  $10^{-2}$  &4.78e-6 &7.36e-7 &2.77e-9 &5.45e-11 &4.88e-13 &2.23e-14 & 1.92\\
               &  $10^{-3}$&8.30e-5 &8.78e-7 &3.81e-9 &7.55e-11 &6.43e-13 &1.23e-14 & 2.26\\
      \hline
      &  1  &3.34e-6 &3.56e-8 &2.85e-10 &5.76e-12 &8.68e-14 &1.25e-15 & 2.17\\
    (b)        &  $10^{-2}$  &1.24e-5 &8.29e-7 &2.31e-9 &6.09e-11 &4.78e-13 &2.18e-14 & 2.02\\
             &  $10^{-3}$   &6.99e-5 &1.73e-6 &1.09e-8 &5.38e-11 &1.17e-12 &1.59e-14 & 2.22\\
      \hline
      &   1   &8.04e-6 &9.05e-8 &6.80e-10 &1.39e-11 &2.08e-13 &3.02e-15 & 2.17\\
      (c)       & $10^{-2}$  &3.01e-5 &1.71e-6 &3.85e-9 &1.26e-10 &9.22e-13 &4.21e-14 & 2.04\\
             &  $10^{-3}$ &1.16e-4 &4.09e-6 &2.65e-8 &6.65e-11 &2.75e-12 &3.49e-14 & 2.19\\
      \hline
     \end{tabular}}
\end{center}
\end{table}

\begin{table}[htb!]
\caption{The $L^2$ errors for initial data (a)-(c)  with
  $h=10^{-5}$ and $\mu(\alpha)=\chi_{[1/2,1]}(\alpha)$, by the Laplace transform method.
The notation $r$ denotes the exponential convergence rate in the error $\|u_{N,h}^n-u(t_n) \|_{L^2(\Om)} \le C e^{-rN}$.}
\label{tab:time-error-exp-chi}
\begin{center}
\vspace{-.3cm}{\setlength{\tabcolsep}{7pt}
     \begin{tabular}{|c|c|cccccc|c|}
     \hline
      case & $t\ \backslash \ N$ &$3$ &$5$ &$7$ & $9$ & $11$ & $13$ & $r$ \\
     \hline
      & 1 &4.54e-6 &2.30e-7 &1.63e-9 &1.69e-11 &2.36e-13 &8.46e-15 & 2.02\\
    (a)&  $10^{-2}$  &6.21e-5 &1.65e-6 &3.71e-9 &1.07e-10 &7.00e-13 &2.58e-14 & 2.16\\
               &  $10^{-3}$&8.02e-4 &3.61e-6 &1.66e-8 &4.17e-10 &3.10e-12 &6.73e-15 & 2.55\\
      \hline
      &  1  &4.78e-6 &4.74e-7 &2.43e-9 &3.44e-11 &3.49e-13 &1.87e-14 & 1.94\\
    (b)        &  $10^{-2}$  &1.03e-4 &1.13e-6 &3.58e-9 &8.78e-11 &5.04e-13 &1.93e-14 & 2.24\\
             &  $10^{-3}$   &5.12e-4 &4.79e-6 &4.95e-8 &5.23e-10 &5.15e-12 &5.58e-14 & 2.29\\
      \hline
      &   1    &4.79e-6 &5.61e-7 &2.75e-9 &4.07e-11 &3.94e-13 &2.23e-14 & 1.92\\
     (c)       & $10^{-2}$  &1.18e-4 &6.08e-7 &3.37e-9 &7.22e-11 &2.84e-13 &8.94e-14 & 2.10\\
              &  $10^{-3}$ &1.09e-4 &5.24e-6 &6.02e-8 &5.62e-10 &5.95e-12 &1.02e-13 & 2.07\\
      \hline
     \end{tabular}}
\end{center}
\end{table}

\begin{table}[htb!]
\caption{The $L^2$ errors for initial data (b) and (c) with
 $h=10^{-5}$, $\mu(\al)=(\al-1/2)^2$ and $N=5$ at small time instances $t=10^{-k}$, $k =4,5,\cdots,9$, by the Laplace transform method.}
\label{tab:time-sing-smalltime-Laplace}
\begin{center}
\vspace{-.3cm}{\setlength{\tabcolsep}{7pt}
     \begin{tabular}{|c|cccccc|}
     \hline
     case $\backslash\ t$  & $10^{-4}$  & $10^{-5}$  & $10^{-6}$ & $10^{-7}$ & $10^{-8}$ & $10^{-9}$  \\
     \hline
      (b) &7.05e-6 &9.39e-6 &1.58e-5 &1.75e-5 &1.81e-5 &1.82e-5 \\
      \hline
      (c) &6.39e-6 &1.17e-5 &1.53e-5 &1.68e-5 &1.75e-5 &1.79e-5  \\
      \hline
     \end{tabular}}
\end{center}
\end{table}

One salient feature of the fully discrete scheme I is that it allows computing the solution at any arbitrarily
large time directly. This allows one to examine the asymptotic behavior of the solution as the time
$t\to\infty$; see Table \ref{tab:time-sing-largetime} and Fig. \ref{fig:soldecay-large}. In particular, one clearly
observes the logarithmic decay of the solution, as predicted by \cite[Theorem 2.1]{LiLuchkoYamamoto:2014}; see also Fig.
\ref{fig:soldecay-large}. This numerically verifies the ultraslow decay asymptotics for distributed order diffusion process,
in comparison with sublinear decay for subdiffusion and exponential decay for normal diffusion.

\begin{table}[htb!]
\caption{The $L^2$ norm of the solution for initial data (a) and (c) with
 $h=10^{-5}$, $\mu(\al)=(\al-1/2)^2$ and $N=10$ at large time instances $t=10^k$, $k =6,8,\cdots,18$, computed by the Laplace transform method.}
\label{tab:time-sing-largetime}
\begin{center}
\vspace{-.3cm}{\setlength{\tabcolsep}{7pt}
     \begin{tabular}{|c|ccccccc|c|}
     \hline
      case $\backslash \ k$  & 6 & 8 & 10 & 12 & 14 & 16& 18 &rate \\
     \hline
      (a) &3.33e-4 &2.70e-4 &2.26e-4 &1.95e-4 &1.71e-4 &1.52e-4 &1.37e-4 &$ 1/k$ \\
      \hline
      (c) &1.06e-3 &8.54e-4 &7.17e-4 &6.17e-4 &5.41e-4 &4.82e-4 &4.34e-4 &$1/k$ \\
      \hline
     \end{tabular}}
\end{center}
\end{table}

\begin{figure}[htb!]
  \centering
  \includegraphics[trim = .1cm .1cm .1cm 0.0cm, clip=true, width=10cm]{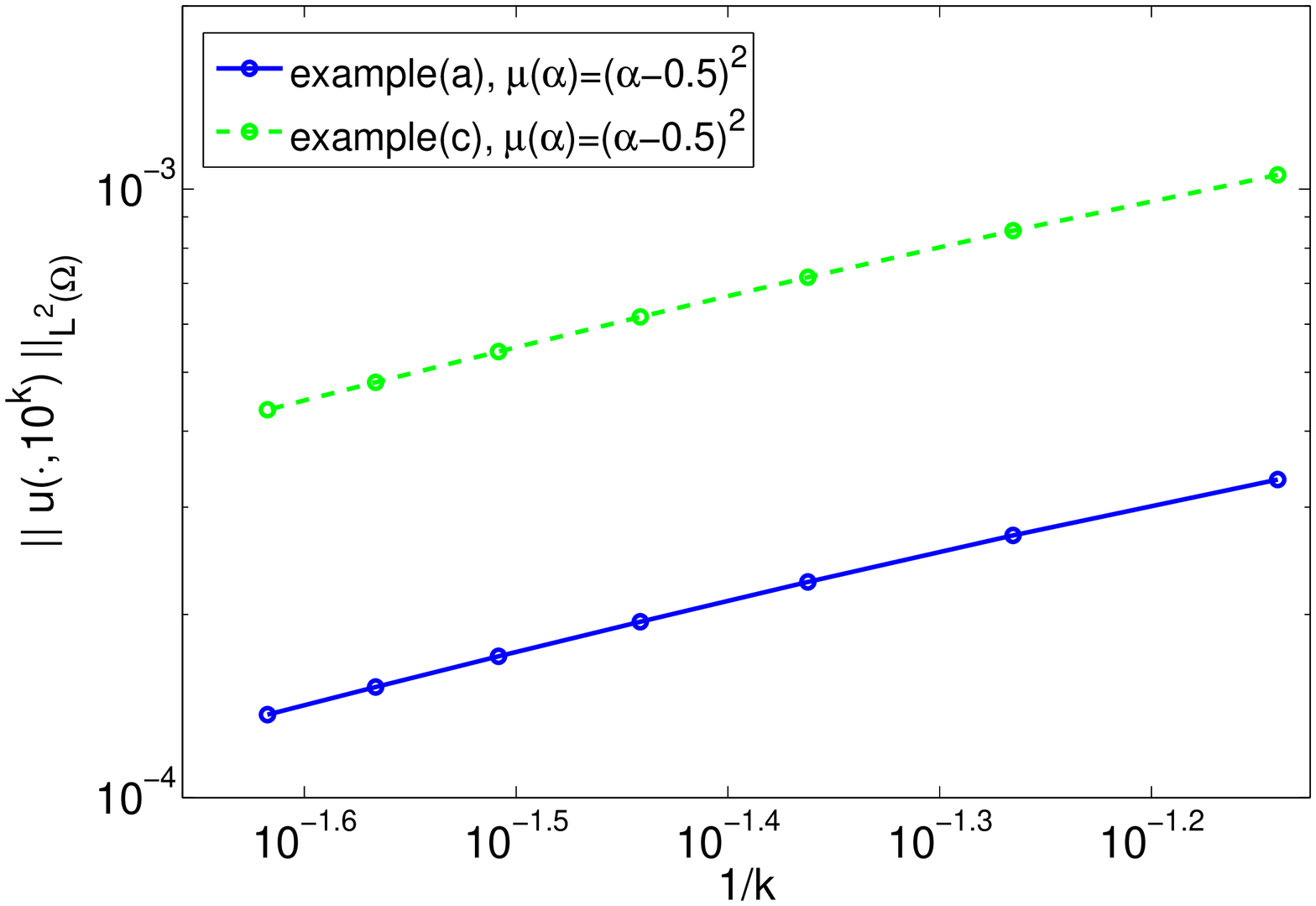}
  \caption{The $L^2$ norm of the solution for initial data (a) and (c) at $t=10^k$, $k = 6,8,\cdots,18$, by the Laplace transform method}.
  \label{fig:soldecay-large}
\end{figure}

\subsection{Numerical results for the fully discrete scheme II}

Last we verify the convergence of the fully discrete scheme II, {\it i.e.}, convolution quadrature based on the backward Euler
method. By Theorem \ref{thm:error_fully_CQ}, it exhibits a first order convergence with respect to the time step size $\tau$.
This is fully confirmed by the numerical results in Tables \ref{tab:time-error-cq} and \ref{tab:time-error-dchi} for the weight
functions $\mu(\al)=(\al-1/2)^2$ and $\mu(\alpha)=\chi_{[1/2,1]}(\alpha)$, respectively. A first order convergence is observed for all three
examples and at all time instances, showing the robustness of the scheme.

\begin{table}[htb!]
\caption{The $L^2$ errors for initial data (a)-(c)  with $h=10^{-4}$
 and $\mu(\al)=(\al-1/2)^2$, by the backward Euler convolution quadrature.}
\label{tab:time-error-cq}
\begin{center}
\vspace{-.3cm}{\setlength{\tabcolsep}{7pt}
     \begin{tabular}{|c|c|cccccc|c|}
     \hline
     case &  { $t \ \backslash\ N$} &10 & 20 &40 &80 & 160 &320 & rate \\
     \hline
      & 1 &1.82e-5 &8.78e-6 &4.31e-6 &2.12e-6 &1.01e-6 &4.74e-7 & 1.05 (1.00)\\
    (a)&  $10^{-2}$  &8.64e-4 &3.91e-4 &1.88e-4 &9.20e-5 &4.55e-5 &2.26e-5 &  1.05 (1.00)\\
               &  $10^{-3}$ &2.17e-2 &1.10e-2 &5.51e-3 &2.76e-3 &1.38e-3 &6.92e-4 &  0.99 (1.00)\\
      \hline
      &  1  &4.81e-5 &2.32e-5 &1.14e-5 &5.60e-6 &2.67e-6 &1.26e-6 &  1.05 (1.00)\\
    (b)        &  $10^{-2}$  &8.11e-3 &3.87e-3 &1.88e-3 &9.29e-4 &4.61e-4 &2.30e-4 &  1.03 (1.00)\\
             &  $10^{-3}$   &1.48e-2 &7.46e-3 &3.74e-3 &1.88e-3 &9.39e-4 &4.70e-4 &  1.00 (1.00)\\
      \hline
      &   1   &5.81e-5 &2.81e-5 &1.38e-5 &6.76e-6 &3.23e-6 &1.52e-6 &   1.05 (1.00)\\
      (c)       & $10^{-2}$  &1.01e-2 &4.80e-3 &2.34e-3 &1.15e-3 &5.72e-4 &2.85e-4 &  1.03 (1.00)\\
             &  $10^{-3}$ &7.35e-3 &3.66e-3 &1.82e-3 &9.11e-4 &4.55e-4 &2.27e-4 &  1.00 (1.00)\\
      \hline
     \end{tabular}}
\end{center}
\end{table}

\begin{table}[htb!]
\caption{The $L^2$ errors for initial data (a)-(c)  with $h=10^{-4}$ and $\mu(\alpha)=\chi_{[1/2,1]}(\alpha)$, by the backward Euler
convolution quadrature.}
\label{tab:time-error-dchi}
\begin{center}
\vspace{-.3cm}{\setlength{\tabcolsep}{7pt}
     \begin{tabular}{|c|c|cccccc|c|}
     \hline
     case & { $t\ \backslash\ N$} &10 & 20 &40 &80 & 160 &320 & rate \\
     \hline
      & 1 &2.20e-4 &1.06e-4 &5.20e-5 &2.58e-5 &1.28e-5 &6.40e-6 & 1.02 (1.00)\\
    (a)&  $10^{-2}$  &1.76e-2 &8.81e-3 &4.40e-3 &2.20e-3 &1.10e-3 &5.49e-4 &  1.00 (1.00)\\
               &  $10^{-3}$ &3.92e-3 &1.98e-3 &9.95e-4 &4.99e-4 &2.50e-4 &1.25e-4 &  0.99 (1.00)\\
      \hline
      &  1  &6.52e-4 &3.11e-4 &1.52e-4 &7.53e-5 &3.74e-5 &1.87e-5 &  1.03 (1.00)\\
    (b)        &  $10^{-2}$  &1.25e-2 &6.26e-3 &3.13e-3 &1.56e-3 &7.82e-4 &3.91e-4 &  1.00 (1.00)\\
             &  $10^{-3}$   &5.76e-3 &2.88e-3 &1.44e-3 &7.18e-4 &3.59e-4 &1.79e-4 &  1.00 (1.00)\\
      \hline
      &   1   &7.92e-4 &3.78e-4 &1.85e-4 &9.14e-5 &4.54e-5 &2.27e-5 &   1.03 (1.00)\\
      (c)       & $10^{-2}$  &7.40e-3 &3.71e-3 &1.86e-3 &9.28e-3 &4.64e-4 &2.32e-4 &  1.00 (1.00)\\
             &  $10^{-3}$ &6.10e-3 &3.06e-3 &1.53e-3 &7.65e-4 &3.83e-4 &1.91e-4 &  1.00 (1.00)\\
      \hline
     \end{tabular}}
\end{center}
\end{table}

To examine more closely the convergence behavior of the scheme, we consider $t=10^{-k}$, $k = 4,5,\cdots,9,$, and for each
time instance $t$, divide the interval $[0,t]$ into $N=10$ subintervals. The scheme works well for the smooth initial
data in example (a), however, it works poorly for the singular initial data  in example (c), \emph{cf.} Table \ref{tab:time-sing-smalltime}.
This behavior is predicted by Theorems \ref{thm:error-nonsmooth} and \ref{thm:error_fully_CQ}: the error is dominated by
the factor $\frac{\tau}{t}$ for $L^2(\Om)$ initial data. In Fig. \ref{fig:soldecay-small}, we plot the error ratio $\|U_h^1-u(\tau)\|/\tau$
against $\log \tau$ for smooth initial data in example (a). Theorem \ref{thm:error-smooth} predicts an error estimate
$\|U_h^1-u(\tau)\|_{L^2(\Om)}\leq c\tau \log \tau^{-1}$. The log factor $\ell_2(t)$ in Theorem \ref{thm:error-smooth}
is fully confirmed by Fig. \ref{fig:soldecay-small}, and thus the corresponding error estimate is sharp.

\begin{table}[htb!]
\caption{The $L^2$ errors for initial data (a) and (c) with
 $h=10^{-5}$ and $N=10$, at time instances $t=10^{-k}$, $k=4,5,\cdots,9$, by backward Euler convolution quadrature.}
\label{tab:time-sing-smalltime}
\begin{center}
\vspace{-.3cm}{\setlength{\tabcolsep}{7pt}
     \begin{tabular}{|c|cccccc|c|}
     \hline {case $\backslash\ t$} & $10^{-4}$  & $10^{-5}$  & $10^{-6}$ &
      $10^{-7}$ & $10^{-8}$ & $10^{-9}$ & rate \\
     \hline
      (a) &2.42e-3 &1.03e-4 &7.87e-6 &7.59e-7 &7.58e-8 &7.44e-9 &  1.01 (1.00) \\
      \hline
      (c) &7.44e-3 &5.67e-3 &4.30e-3 &3.27e-3 &2.49e-3 &1.88e-3 &   0.12 (0.12) \\
      \hline
     \end{tabular}}
\end{center}
\end{table}

\begin{figure}[htb!]
  \centering
  \includegraphics[trim = 0cm .1cm 0cm 0.0cm, clip=true, width=10cm]{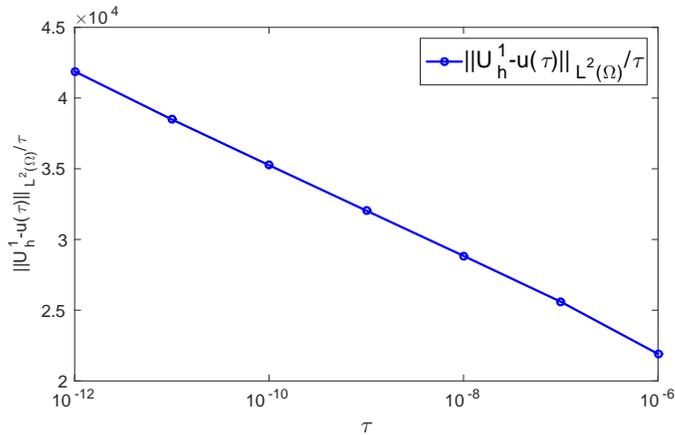}
  \caption{The $L^2$ errors for the backward Euler method for initial data (a) at small time instances $t_1=\tau=10^{-k}$, $k=5,6,...,11,12$. }
\label{fig:soldecay-small}
\end{figure}

\section{Concluding remarks}
In this work, we have presented a first rigorous  numerical analysis of two fully discrete schemes (one based on the Laplace transform and
another based on convolution quadratures) for the distributed-order time fractional diffusion equation
with nonsmooth initial data. We have provided regularity estimates for the solution and developed one space semidiscrete Galerkin
method and two fully discrete schemes. Optimal error estimates for the semidiscrete scheme were shown using an operator trick due
to Fujita and Suzuki. The first fully discrete scheme is based on quadrature approximation of the inverse Laplace transform with
a deformed contour of hyperbolic type, and exhibits an exponential convergence. It is especially suited to computing the solution
at many and large time instances. The second fully discrete scheme is based on convolution quadrature generated by the backward
Euler method, and exhibits a first order convergence. The sharpness of the error estimates were fully verified by extensive numerical
experiments for both smooth and nonsmooth initial data.

This work represents only a first step towards rigorous numerical analysis of distributed order subdiffusion, and there are a number
of avenues for further research. First, the semidiscrete and fully discrete schemes may be extended to the distributed order diffusion
wave equation, with a nonnegative weight $\mu(\alpha)\in C[0,2]$. Second, the error estimates for the semidiscrete Galerkin scheme
in the case of nonsmooth initial data $v\in L^2\II$ depend on the final time $T$. It remains unknown how to get rid of this factor.
This is especially important if the solution is sought for large $T$. Third, the assumption $\mu(\alpha)\in  C[0,1]$  might be too
restrictive and its is of much interest to relax it to $\mu(\alpha)\in L^\infty(0,1)$.

\section*{Acknowledgments}
The research of R. Lazarov and Z. Zhou have been supported in parts by NSF
Grant DMS-1016525 while that of D. Sheen by NRF--2014R1A2A1A11052429.

\bibliographystyle{abbrv}
\bibliography{frac}
\end{document}